\newtheorem{theorem}{Theorem}[section]
\newtheorem{lemma}{Lemma}[section]
\newtheorem{expl}{Example}[section]
\newtheorem{remark}{Remark}[section]
\newtheorem{thm}{Theorem}[section]
\newtheorem{rem}[thm]{Remark}
\newcommand{\beq}[1]{\begin{equation} \label{#1}}
\newcommand{\eeq}{\end{equation}}
\newcommand{\bed}{\begin{displaymath}}
\newcommand{\eed}{\end{displaymath}}
\newcommand{\bea}{\bed\begin{array}{rl}}
\newcommand{\eea}{\end{array}\eed}
\newcommand{\barray}{\begin{array}{ll}}
\newcommand{\earray}{\end{array}}
\newcommand{\lf}{\lfloor}
\newcommand{\Spvek}[2][r]{%
  \gdef\@VORNE{1}
  \left(\hskip-\arraycolsep%
    \begin{array}{#1}\vekSp@lten{#2}\end{array}%
  \hskip-\arraycolsep\right)}
\def\vekSp@lten#1{\xvekSp@lten#1;vekL@stLine;}
\def\vekL@stLine{vekL@stLine}
\def\xvekSp@lten#1;{\def\temp{#1}%
  \ifx\temp\vekL@stLine
  \else
    \ifnum\@VORNE=1\gdef\@VORNE{0}
    \else\@arraycr\fi%
    #1%
    \expandafter\xvekSp@lten
  \fi}
  \newcommand\figcaption{\def\@captype{figure}\caption}
  \newcommand\tabcaption{\def\@captype{table}\caption}
\def\sqr#1#2{{\vcenter{\vbox{\hrule height.#2pt
\hbox{\vrule width.#2pt height#1pt \kern#1pt
\vrule width.#2pt} \hrule height.#2pt}}}}
\newcommand{\E}{\mathbb{E}}
\newcommand{\PP}{\mathbb{P}}
\newcommand{\RR}{\mathbb{R}}
\def\br{\breve}
\def\a{\alpha} 
\def\e{\varepsilon}   
 \def\k{\kappa}
\def\tr{\triangle} \def\lf{\left} \def\rt{\right}
\def\b{\beta}
\def\<{\langle} \def\>{\rangle}
\def\1{\oslash} \def\2{\oplus} \def\3{\otimes} \def\4{\ominus}
\def\5{\circ} \def\6{\odot} \def\7{\backslash} \def\8{\infty}
\def\9{\bigcap} \def\0{\bigcup} \def\+{\pm} \def\-{\mp}
\def\[{\langle} \def\]{\rangle}
\newcommand{\dis}{\displaystyle}
\def\nn{\nonumber}
\def\bc{\begin{center}}       \def\ec{\end{center}}
\def\ba{\begin{array}}        \def\ea{\end{array}}
\def\be{\begin{equation}}     \def\ee{\end{equation}}
\def\bea{\begin{eqnarray}}    \def\eea{\end{eqnarray}}
\def\beaa{\begin{eqnarray*}}  \def\eeaa{\end{eqnarray*}}
\def\la{\label}
\begin{document}
\title{An explicit
approximation for super-linear stochastic functional differential equations}

\author{Xiaoyue Li
\thanks{School of Mathematics and Statistics,
Northeast Normal University, Changchun,  130024, China. Research
of this author  was supported by  the National Natural Science Foundation of China (No. 11971096), the National Key R$\&$D Program of China (2020YFA0714102), the Natural Science Foundation of Jilin Province (No. YDZJ202101ZYTS154), the Education Department of Jilin Province (No. JJKH20211272KJ), and the Fundamental Research Funds for the Central Universities.}
\and Xuerong Mao
\thanks{Department of Mathematics and Statistics, University of Strathclyde, Glasgow G1 1XH, UK. Research of this author was supported by
the Royal Society (WM160014, Royal Society Wolfson Research Merit Award),
the Royal Society of Edinburgh (RSE1832),
and Shanghai Administration of Foreign Experts Affairs (21WZ2503700, the Foreign Expert Program).
}
\and
Guoting Song\thanks{School of Mathematics and Statistics,
Northeast Normal University, Changchun, 130024, China. 
}
\date{}}
\maketitle

\begin{abstract}
Since it is difficult to  implement implicit schemes  on the infinite-dimensional space,
we aim to develop the explicit numerical method for approximating super-linear stochastic functional differential equations (SFDEs).
Precisely, borrowing the truncation idea and linear interpolation we propose an explicit  truncated Euler-Maruyama (EM) scheme for SFDEs,
and obtain the  boundedness and  convergence in $L^p$ $(p\geq2)$. We also yield the convergence rate with $1/2$ order.
Different from some previous works
\cite{mao2003, zhang-song-liu}, we release the global Lipschitz restriction on the diffusion
coefficient.
Furthermore, we reveal that  numerical solutions preserve the underlying exponential stability.
Moreover, we give several examples to support our theory.
\end{abstract}

 \vspace{3mm}
 \noindent {\bf Keywords.}
 Stochastic functional differential equation;
Truncated Euler-Maruyama scheme; Boundedness;
 Strong convergence;
Exponential stability.
%
\section{Introduction}\label{Tntr}
In recent years, stochastic functional differential equations (SFDEs) have been used in a wide spectrum of application, including  economics, mechanics, biological sciences, and control theory among others; see \cite{BCDDR, KMCW, Kushner2008, Mohammed, NNY2021I, NNY2021II,SM, stoica}. The theory of SFDEs has been investigated extensively; see \cite{Cont-Fournie, dupire2009, LMS2011, Mao2007, Mohammed, Dang2020,Shaikhet, vRM}.
Since it is almost impossible to get the explicit solutions of SFDEs,  numerical solutions become the viable alternative.
For stochastic delay differential equations (SDDEs), which form a special and important
class of SFDEs, a good summary and detailed discussion
on the numerical analysis can be found, e.g., in
\cite{Bao2013, Buckwar-Kuske-Mohammed-Shardlow, cao2015, DKS2016, Hu-Moha-Yan2004, mao2011, Wang-Gan-Wang, ZH}.
 In this paper we  focus on the numerical method of  nonlinear SFDE
\begin{align}\la{o1.1}
\left\{
\begin{array}{ll}
\mathrm{d}x(t)=f(x_t)\mathrm{d}t +g(x_t)\mathrm{d}B(t),~~~t>0,&\\
~~x(t)=\xi(t),~~~t\in[-\tau,~0],&\\
\end{array}
\right.
\end{align}
 where $\tau>0$ is a constant,  $f: C([-\tau,~0];\RR^n) \rightarrow \RR^{n}$ and $g: C([-\tau,~0];\RR^n)  \rightarrow \RR^{n\times d}$ are local Lipschitz continuous,
 $B(\cdot)$ is a $d$-dimensional
Brownian motion on a filtered probability space $( \Omega,~\mathcal{F},~\{{\mathcal{F}}_{t}\}_{t\geq0}, ~\PP )$
satisfying the usual conditions (it is increasing and right
continuous while ${\mathcal{F}}_{0}$ contains all $\mathbb{P}$-null sets),
the initial data $\xi(\cdot)\in C([-\tau,~0];\RR^n)$,
and $x_t=\{x_t(\theta): \theta\in [-\tau, 0]\}$ is $C([-\tau,0];\RR^n)$-valued stochastic process on $t\geq 0$, in which $x_t(\theta)=x(t+\theta)$.
Our primary objectives are to construct an explicit numerical algorithm  for  nonlinear SFDE \eqref{o1.1} and further establish the finite-time strong convergence and  long-time  stability theory.

Numerical algorithms are well investigated for SFDEs with globally Lipschitz continuous coefficients, such as Euler-Maruyama (EM) method \cite{mao2003}, $\theta$-EM method \cite{Buckwar2004},
semi-implicit EM method \cite{Ding-Wu-Liu2006},  split-step backward EM method \cite{Jiang-Shen-Hu}. More often than not, SFDEs arising in practice are  highly nonlinear.
 ~As a result, some implicit numerical procedures have been developed to combat the nonlinearity, such as the backward EM method \cite{zhou-jin}, the split-step theta method for stochastic delay integro differential equations (SDIDEs) \cite{zhao-yi-xu2020}.
Compared with  stochastic differential equations (SDEs), it is more difficult to implement  implicit schemes for nonlinear SFDEs on the infinite-dimensional space.
To tackle this difficulty,
the  backward EM scheme \cite{zhou-jin} is  implicit only with respect to the current state $x(t)$,
 and \cite{zhao-yi-xu2020} required  that the integrand $h(\cdot, \cdot)$ in the integro term $\int_{t-\tau}^{t}h(s,x(s))\mathrm ds$ is  globally Lipschitz.
Although the strong  convergence in \cite{zhao-yi-xu2020, zhou-jin} have been studied, their convergence rate  are still open.
Owing to the simple algebraic structures and cheap computational cost,
a truncated EM scheme with order less than 1/2 was proposed \cite{zhang-song-liu} for nonlinear SFDEs with one-sided Lipschitz drift coefficient and  global Lipschitz diffusion coefficient.
It is necessary to point out that above methods are not applicable for a large  class of SFDEs with the super-linear diffusion term, 
such as the stochastic volatility model
\begin{align}\la{e1}
\left\{
\begin{array}{ll}
\mathrm dx(t)=\big(1+4x(t)-4x^{3}(t)\big)\mathrm dt
+2\int_{-1/2}^{0}x^{2}(t+\theta)\mathrm d\theta\mathrm dB(t),~~~t>0,\\
x(t)=t-1,~~~t\in[-1/2,0].
\end{array}
\right.
\end{align}
This SFDE   may be regarded as  the functional version of the stochastic volatility model introduced by Hutzenthaler and Jentzen \cite[p.89, (4.63)]{HJ2014} and its more general versions have been discussed in \cite{Chang-Youree, SM}.
Naturally, under less restrictive conditions, to construct an efficient  explicit numerical scheme and to establish the strong convergence theory for  nonlinear SFDE \eqref{o1.1} become our top priority.

Moreover, the stability analysis of the numerical methods attracts more and more attentions.
 Although stability of numerical solutions for SDIDEs has been investigated widely \cite{Ding-Wu-Liu2006, Jiang-Shen-Hu, li-gan2012, liu-deng2020, RB2008, zhao-yi-xu2020}, less research has been done
for SFDEs. To our best knowledge, there are only a few papers so far. For example,
Wu, Mao and Kloeden \cite{Wu-Mao-Kloeden2011} examined the almost surely  exponential stability of the EM method
for linear SFDEs.
Zhou, Xie and Fang \cite{ZXF2014}  utilized the backward EM scheme to approximate
the almost surely exponential stability  of nonlinear SFDEs, and
Liu and Deng \cite{liu-deng2021} went a further step to  study
this using  the complete backward EM scheme.
As far as we know, it is still open how to design an explicit scheme to approximate the stability of super-linear SFDEs.
Therefore, the other aim of this paper is to look for an explicit numerical method to capture the exponential stability of super-linear SFDEs.

Borrowing the truncation idea from \cite{Li2018} and linear interpolation,
we propose an explicit truncated EM scheme for nonlinear SFDEs, and obtain the strong convergence and stability theory.
Our main contributions are highlighted as follows:
\begin{itemize}
\item Under flexible Khasminskii-type conditions ((A1) and (A2)),   the explicit truncated EM scheme for super-linear SFDEs is proposed. It is proved that numerical solutions are bounded  and converge to the exact solutions in the $p$th moment  ($p\geq2$).
\item The optimal rate of convergence, $1/2$ order,  is also yielded under additional polynomial growth  coefficient conditions.
Compared with the previous works
\cite{mao2003, zhang-song-liu}, the global Lipschitz restriction on the diffusion coefficient is released.
\item It is examined that the explicit numerical solutions preserve the long-time asymptotic
properties of the exact ones, including the  exponential stability in $L^p$ and with probability $1$ ($\PP-1$).
\end{itemize}

The rest of the paper is organized as follows.
Section \ref{NP} introduces some notations and preliminary  results on  the exact solutions of SFDE \eqref{o1.1}.
Section \ref{MR} proposes an explicit truncated EM scheme for SFDE \eqref{o1.1} and
studies the strong convergence and convergence rate.
Section \ref{ES} goes a further step
to investigate the exponential stability of numerical solutions in $L^p$ and $\PP-1$.
Section \ref{Nexamp} provides several examples to illustrate our results.

\section{Preliminary results}\label{NP}
Throughout this paper,
the following notations are used.
Let $\mathbb{N}$ denote  the set of the non-negative integers.
Let $|\cdot|$ denote both the Euclidean norm in $\RR^n$ and the trace  norm in $\RR^{n\times d}$.
Let $\lfloor a \rfloor$ denote the integer part of the real
number $a$. Let $a\vee{}b=\max\{a,b\}$ and $a\wedge{}b=\min\{a,b\}$ for real numbers $a,~b$.
Let $A^T$ denote the transpose of the vector or matric $A$.
Let $\boldsymbol{1}_A(\cdot)$ denote the indicator function of the set $A$.
Define $x/|x|=\textbf 0$ when $|x|=0$.
Denote by $ \mathcal C:=C([-\tau,~0];~\mathbb{R}^{n})$ the space of all continuous functions
 $\phi(\cdot)$ from
$[-\tau,~0]$ to $\RR^n$ equipped with the  norm $\|\phi\|
=\sup_{-\tau\leq\theta\leq0}|\phi(\theta)|$. Let $\RR_+=[0,\infty)$ and
denote by
   $C^{2} (\RR^n;  ~{{\RR}}_+)$ the space of all continuously twice
 differentiable  nonnegative functions
  defined on $\RR^n$.
For any  $V \in C^{2} (\RR^n;   ~\RR_+)$, define an
operator ${\cal{L}}V$
 $:  \mathcal C\to  \RR$ by
\begin{align*}
{\cal{L}}V(\phi) =&V_x(\phi(0))f(\phi)
 +\frac{1}{2}\hbox{~trace} (g^T(\phi)V_{xx}(\phi(0))g(\phi)),
\end{align*}
where
$$V_x(x)=\left(\frac{\partial V(x)}{\partial x_1},\cdots,\frac{\partial V(x)}{\partial x_n}\right),
~~~~V_{xx}(x)=\left(\frac{\partial^2V(x)}{\partial x_i\partial x_j}\right)_{n\times n}.$$
Denote by $ \mathcal{W}:=\mathcal{W}([-\tau,~0];~\mathbb{R}_+)$ the space of all bounded,  Borel-measurable  functions $\rho(\cdot)$ from $[-\tau,~0]$ to $\RR_+$ such that $\int_{-\tau}^{0}\rho(\theta)\mathrm d\theta=1$.
In addition, $L$ denotes  a generic positive constant, independent of $m$, and $\tr$ (used below), which may take different values at  different appearances.

For convenience we impose the following hypotheses.

\textbf{(A1).} For each  $\ell>0$, there exists a  $R_{\ell}>0$ such that
\begin{align*}
  |f(\phi)-f(\bar \phi)|\vee |g(\phi)-g(\bar \phi)|
  \leq{}
   R_\ell\Big(|\phi(0)-\bar \phi(0)|+\frac{1}{\tau}\int_{-\tau}^{0}|\phi(\theta)-\bar \phi(\theta)|\mathrm d\theta\Big)
  \end{align*}
for all $\phi,~\bar \phi\in \mathcal C$ with $\|\phi\|\vee\|\bar \phi\|\leq \ell$.

\textbf{(A2).} There exist positive constants $p,~\varrho,~ a_1,~a_2,~a_3$ satisfying $p\geq2, ~a_2>a_3$, and  a function  $\rho_1(\cdot)\in\mathcal{W}$ such that for any $\phi\in \mathcal C$,
\begin{align}\la{assA2}
&2(\phi(0))^Tf(\phi)+(p-1)|g(\phi)|^2\nn\\
\leq &a_1\big(1+|\phi(0)|^{2}+\frac{1}{\tau}\int_{-\tau}^{0}|\phi(\theta)|^{2}\mathrm d\theta\big)-a_2|\phi(0)|^{2+\varrho}+a_3\int_{-\tau}^{0}|\phi(\theta)|^{2+\varrho}\rho_1(\theta)\mathrm d\theta.
\end{align}
%
\begin{rem}\la{ass12}
If there exists  a function $\tilde\rho(\cdot)\in\mathcal W$ such that
\begin{align*}
&2(\phi(0))^Tf(\phi)+(p-1)|g(\phi)|^2\nn\\
\leq &a_1\big(1+|\phi(0)|^{2}+\int_{-\tau}^{0}|\phi(\theta)|^{2}\tilde \rho(\theta)\mathrm d\theta\big)-a_2|\phi(0)|^{2+\varrho}+a_3\int_{-\tau}^{0}|\phi(\theta)|^{2+\varrho}\rho_1(\theta)\mathrm d\theta,
\end{align*}
due to
$$\int_{-\tau}^{0}|\phi(\theta)|^2\tilde\rho(\theta)\mathrm d\theta\leq L \int_{-\tau}^{0}|\phi(\theta)|^2\mathrm d\theta,$$
 \eqref{assA2} follows by choosing $a_1:=a_1(\tau L\vee1)$.
%

\end{rem}


Now we prepare some  results on the exact solution  to end this section.
\begin{theorem}\la{oth1}
 Assume that
  $(\textup{A}1)$ and $(\textup{A}2)$ hold. Then SFDE \eqref{o1.1} with the initial data $\xi\in \mathcal C$  has a unique global solution $x(\cdot)$ on $[-\tau,\infty)$  satisfying
   \be\la{o2.7}\sup_{-\tau\leq t<\infty}
   \E|x(t)|^p \leq L.
   \ee
For any constant $m>\|\xi\|$ and  $T>0$,
\begin{align} \la{o2.9}
\mathbb{P}\{\delta_{m}\leq T\}\leq\frac{L_1}{m^p},
\end{align}
where
 \begin{align}\label{o2.8}
\delta_{m}:=\inf\left\{ t\geq 0:|x(t)|\geq m\right\},
 \end{align}
 and the constant $L_1$ depends on $T$ but  is independent of $m$.
\end{theorem}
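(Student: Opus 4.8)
The plan is to combine a localization argument with a Lyapunov analysis based on the test function $V(x)=|x|^p$. Since $(\textup{A}1)$ makes $f$ and $g$ locally Lipschitz in the norm $\|\cdot\|$, the standard truncation procedure (modify $f,g$ outside the ball $\{\|\phi\|\le\ell\}$ into globally Lipschitz maps and invoke the classical existence-and-uniqueness theorem for SFDEs) produces, for the given $\xi$, a unique maximal local solution $x(\cdot)$ on $[-\tau,\delta_\infty)$, where $\delta_\infty=\lim_{m\to\infty}\delta_m$ with $\delta_m$ as in \eqref{o2.8}. Everything then reduces to one moment estimate, uniform in $m$, for $x(\cdot\wedge\delta_m)$: global existence, \eqref{o2.7} and \eqref{o2.9} will all be read off from it.

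Because $V\in C^2(\RR^n;\RR_+)$ for $p\ge2$, I would first compute, using the Cauchy--Schwarz bound $|\phi(0)^Tg(\phi)|^2\le|\phi(0)|^2|g(\phi)|^2$,
\[
\mathcal L V(\phi)\le \tfrac p2\,|\phi(0)|^{p-2}\bigl(2(\phi(0))^Tf(\phi)+(p-1)|g(\phi)|^2\bigr),
\]
then substitute $(\textup{A}2)$ and apply Young's inequality to the four mixed terms (with conjugate exponents $\tfrac p{p-2},\tfrac p2$ for the quadratic ones and $\tfrac{p+\varrho}{p-2},\tfrac{p+\varrho}{2+\varrho}$ for the $(2+\varrho)$-terms). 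This should yield
\[
\mathcal L V(\phi)\le c_1+c_2|\phi(0)|^{p}+\frac{c_2}{\tau}\int_{-\tau}^{0}|\phi(\theta)|^{p}\mathrm d\theta-c_3|\phi(0)|^{p+\varrho}+c_4\int_{-\tau}^{0}|\phi(\theta)|^{p+\varrho}\rho_1(\theta)\mathrm d\theta
\]
with positive constants $c_i$; the decisive observation is that these Young exponents are calibrated so that the hypothesis $a_2>a_3$ of $(\textup{A}2)$ is exactly what yields $c_3>c_4$, i.e. the dissipative term outweighs the delayed super-linear term.

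The final step is to apply It\^o's formula to $e^{\e s}|x(s)|^p$ on $[0,t\wedge\delta_m]$ and take expectations, the stochastic integral being a martingale there. The two delayed integrals are controlled by Fubini's theorem and the substitution $u=s+\theta$: for $q\in\{p,\,p+\varrho\}$ and any weight $\rho\in\mathcal W$,
\[
\int_{0}^{t}e^{\e s}\int_{-\tau}^{0}|x(s+\theta)|^{q}\rho(\theta)\,\mathrm d\theta\,\mathrm ds\le e^{\e\tau}\Bigl(\int_{0}^{t}e^{\e u}|x(u)|^{q}\,\mathrm du+\tau\|\xi\|^{q}\Bigr).
\]
Taking $\e>0$ small enough that $c_3>c_4e^{\e\tau}$, the delayed $(p+\varrho)$-integral is absorbed by $-c_3|x|^{p+\varrho}$, and since $p+\varrho>p$ the polynomial $(\e+c_2+c_2e^{\e\tau})|x|^{p}-(c_3-c_4e^{\e\tau})|x|^{p+\varrho}$ is bounded above by a constant; hence $\E\bigl[e^{\e(t\wedge\delta_m)}|x(t\wedge\delta_m)|^p\bigr]\le K_\xi+\tfrac{K}{\e}(e^{\e t}-1)$ with $K_\xi,K$ independent of $m$. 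Since path continuity gives $|x(\delta_m)|=m$ on $\{\delta_m\le t\}$, this bound yields $m^p\PP\{\delta_m\le t\}\le K_\xi+\tfrac K\e(e^{\e t}-1)$, which is \eqref{o2.9} and forces $\PP\{\delta_\infty\le t\}=0$ for every $t$, i.e. the solution is global; dividing by $e^{\e t}$ and letting $m\to\infty$ by monotone convergence gives $\E|x(t)|^p\le K_\xi e^{-\e t}+K/\e\le L$ uniformly in $t\ge0$, which is \eqref{o2.7}. The only genuinely delicate point is the constant bookkeeping in the Young estimates that produces $c_3>c_4$; the remaining manipulations are routine.
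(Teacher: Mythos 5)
Your proposal is correct and follows essentially the same route as the paper: the Lyapunov function $V(x)=|x|^p$, the Young/H\"older calibration showing that $a_2>a_3$ makes the dissipative coefficient of $-|\phi(0)|^{p+\varrho}$ dominate the delayed $(p+\varrho)$-coefficients, localization by $\delta_m$, and the Fubini shift $u=s+\theta$ with a small exponential weight $e^{\e s}$. The only difference is one of packaging: the paper verifies the Khasminskii-type inequality for $\mathcal{L}V$ and then delegates existence, uniqueness and the moment bound to \cite[Theorems 2 and 3]{LMS2011} (and proves \eqref{o2.9} via the unweighted Dynkin formula), whereas you carry out the full weighted It\^o/localization argument yourself — which is fine, modulo the cosmetic point that the passage to the limit $m\to\infty$ in \eqref{o2.7} is Fatou rather than monotone convergence.
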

\begin{proof}\textbf {Proof.}
Under $(\textup{A}1)$ and $(\textup{A}2)$,  in view of \cite[Theorem 2]{LMS2011}, SFDE \eqref{o1.1} with the initial data $\xi\in \mathcal C$ admits a unique global solution.
Moreover, for any $\e>0$ and $\phi\in \mathcal C$,  using \eqref{assA2} and the Young inequality yields
\begin{align*}
&2(\phi(0))^Tf(\phi)+(p-1)|g(\phi)|^2\nn\\
\leq&\Big(a_1+(a_1\e^{-\frac{2}{2+\varrho}})(\e^{\frac{2}{2+\varrho}}|\phi(0)|^2)
+(a_1\e^{-\frac{2}{2+\varrho}})\frac{\e^{\frac{2}{2+\varrho}}}{\tau}\int_{-\tau}^{0}|\phi(\theta)|^{2}\mathrm d\theta\Big)-a_2|\phi(0)|^{2+\varrho}\nn\\
&+a_3\int_{-\tau}^{0}|\phi(\theta)|^{2+\varrho}\rho_1(\theta)\mathrm d\theta\nn\\
\leq
&(a_1+2a_1^{\frac{2+\varrho}{\varrho}}\e^{-\frac{2}{\varrho}})
-(a_2-\e)|\phi(0)|^{2+\varrho}+a_3\int_{-\tau}^{0}|\phi(\theta)|^{2+\varrho}\rho_1(\theta)\mathrm d\theta+\e(\frac{1}{\tau}\int_{-\tau}^{0}|\phi(\theta)|^{2}\mathrm d\theta)^{\frac{2+\varrho}{2}}.
\end{align*}
Letting $\e=(a_2-a_3)/4$ and applying the H\"{o}lder inequality leads to
\begin{align*}
&2(\phi(0))^Tf(\phi)+(p-1)|g(\phi)|^2\nn\\
\leq &L-(a_2-\frac{a_2-a_3}{4})|\phi(0)|^{2+\varrho}
+a_3\int_{-\tau}^{0}|\phi(\theta)|^{2+\varrho}\rho_1(\theta)\mathrm d\theta+\frac{a_2-a_3}{4}(\frac{1}{\tau}\int_{-\tau}^{0}|\phi(\theta)|^{2+\varrho}\mathrm d\theta).
\end{align*}
Define $V(x)=|x|^{p}$.
Applying the  Young inequality and  the H\"{o}lder inequality we derive
\begin{align}\la{o2.10+1}
&{\cal{L}}V(\phi)
\leq\frac{p}{2}|\phi(0)|^{p-2}
\big(2(\phi(0))^Tf(\phi)+(p-1)|g(\phi)|^2\big)\nn\\
\leq&L-\frac{p}{2}\big(a_2-\frac{a_2-a_3}{2}-\frac{(p-2) }{p+\varrho}(a_3+\frac{a_2-a_3}{4})\big)|\phi(0)|^{p+\varrho}\nn\\
&+\frac{p(2+\varrho) a_3}{2(p+\varrho)}\int_{-\tau}^{0}|\phi(\theta)|^{p+\varrho}\rho_1(\theta)\mathrm d\theta
+\frac{p(2+\varrho) (a_2-a_3)}{8(p+\varrho)}(\frac{1}{\tau}\int_{-\tau}^{0}|\phi(\theta)|^{p+\varrho}\mathrm d\theta).
\end{align}
Note that
\begin{align*}
\frac{p(2+\varrho) a_3}{2(p+\varrho)}+\frac{p(2+\varrho) (a_2-a_3)}{8(p+\varrho)}=&\frac{p}{2}\big(a_3-\frac{(p-2)a_3}{p+\varrho}
+\frac{a_2-a_3}{4}-\frac{(p-2)(a_2-a_3)}{4(p+\varrho)}\big)\nn\\
<&\frac{p}{2}\big(a_2-\frac{a_2-a_3}{2}-\frac{(p-2) }{p+\varrho}(a_3+\frac{a_2-a_3}{4})\big).
\end{align*}
Then by similar arguments in the proof of \cite[Theorem 3]{LMS2011}, the required assertion \eqref{o2.7} follows.
Similarly, by virtue of the Dynkin  formula  and \eqref{o2.10+1} we derive that for any $T>0$,
\begin{align*}
\E|x(T\wedge \delta_m)|^p= &|\xi(0)|^p+\E\int^{T\wedge \delta_m}_{0}{\cal{L}}V(x_t)\mathrm dt\leq\|\xi\|^p+LT\leq L_1,
\end{align*}
where the constant $L_1$ depends on $T$ but  is independent of $m$.
Therefore,
\begin{align*}
m^p\PP\{\delta_{m}\leq T\}=&\E(|x(\delta_{m})|^p\textbf 1_{\{\delta_{m}\leq T\}})\leq
\E|x(T\wedge \delta_m)|^p\leq L_1,
\end{align*}
which implies the desired assertion \eqref{o2.9}.
\end{proof}
\section{Numerical scheme and convergence in $L^p$}\label{MR}
The purpose of this section is to construct an  explicit scheme for  SFDE \eqref{o1.1} and further to establish the strong convergence.
Under (A1), we  choose a strictly increasing continuous function $\Gamma(\cdot)$ from $[1, +\infty)$ to $\RR_+$ such that $\lim_{l\rightarrow+\infty}\Gamma(l)=+\infty$ and
 \begin{align}\label{o3.1}
 \sup_{\substack{\|\phi\|\vee \|\bar \phi\|\leq l\\
 \phi\not=\bar\phi}} \dis\Big(\frac{ |f (\phi)-f (\bar\phi)|}{(\Psi(\phi,\bar\phi))^{1/2}}\vee\frac{| g(\phi)-g(\bar\phi)|^2 }{\Psi(\phi,\bar\phi)}\Big)
 \leq~\Gamma(l),~~l\geq1,
 \end{align}
where $\phi,~\bar\phi\in \mathcal C$, and
 $$\Psi(\phi,\bar\phi):=|\phi(0)-\bar\phi(0)|^2+\frac{1}{\tau}\int_{-\tau}^{0}|\phi(\theta)-\bar\phi(\theta)|^2\mathrm d\theta.$$
 Denote by $\Gamma^{-1}(\cdot$) the inverse function of $\Gamma(\cdot)$ from $[\Gamma(1), +\infty)$ to $\RR_{+}$.
For a stepsize $\tr\in(0,1]$ which is a fraction of $\tau$, say $\Delta=\tau/N$ for some integer $N \ge \tau$, define a truncation mapping $\Lambda^{\triangle,\lambda}_{\Gamma}:\RR^n\rightarrow \RR^n$ by
 \begin{align}\la{tru}
\Lambda^{\triangle,\lambda}_{\Gamma}(x)= \Big(|x|\wedge \Gamma^{-1}(K\tr^{-\lambda})\Big) \frac{x}{|x|},
\end{align}
where $K:=\Gamma(1)\vee|f(\textbf 0)|\vee|g(\textbf 0)|^2$,  with a slight  abuses of notation $\textbf 0$ denotes the zero element in ${\cal C}$, namely the function $\phi(\theta)\equiv 0$ for $ \theta\in [-\tau, 0]$,  and $\lambda\in(0,1/2)$.
Let $t_{k}=k\tr$ for any $k=-N,-N+1\cdots$, and define the truncated EM scheme by
 \begin{align}\la{o3.3}
\left\{
\begin{array}{llllll}
\br Y^{\tr}(t_k)=\xi(t_k), ~~~k=-N, -N+1,\cdots,0,&\\
Y^{\tr}(t_{k})=\Lambda^{\triangle,\lambda}_{\Gamma}(\breve Y^{\tr}(t_{k})),~~~k=-N,-N+1,\ldots,&\\
\breve Y^{\tr}(t_{k+1})=Y^{\tr}(t_k)+f(\bar Y_{t_k}^{\tr})\tr +g(\bar Y_{t_k}^{\tr})\triangle B_k,~~~k=0,1,\ldots,&
\end{array}
\right.
\end{align}
where $\triangle B_k=B(t_{k+1})-B(t_{k})$, $\bar Y_{t_k}^{\tr}=\{\bar Y_{t_k}^{\tr}(\theta):-\tau\leq \theta\leq0\}$ is a $\mathcal C$-valued random variable defined by
\begin{align}\label{o3.4}
\bar Y_{t_k}^{\tr}(\theta)&=Y^{\tr}(t_{k+j})+\frac{\theta-j\tr}{\tr}\big(Y^{\tr}(t_{k+j+1})-Y^{\tr}(t_{k+j})\big)\nn\\
&=\frac{(j+1)\tr-\theta}{\tr}Y^{\tr}(t_{k+j})+\frac{\theta-j\tr}{\tr}Y^{\tr}(t_{k+j+1})
\end{align}
for $j\tr\leq \theta\leq(j+1)\tr,~~j=-N,~-N+1,~\cdots,-1$.
\subsection{Uniform moment boundedness of numerical solutions}
We begin with proving that the numerical solutions preserve the uniform  boundedness of exact ones in $L^p$.
Under (A2) we can fix a constant $\e_0\in(0,1)$ such that
\begin{align}\la{e0}
\frac{a_2+a_3}{2}<a_2(1-\e_0)^{\frac{p}{2}}.
\end{align}
For any $\phi\in \mathcal C$, define
\begin{align}\la{varphi}
\Phi(\phi)=1+(1-\e_0)|\phi(0)|^2+\frac{\e_0}{\tau}\int_{-\tau}^{0}|\phi(\theta)|^2\mathrm d\theta.
\end{align}
By \eqref{o3.1}--\eqref{o3.4} we derive that for any $k\in\mathbb N$,
 \begin{align*}
 \big|f(\bar Y_{t_k}^{\tr})\big|
\leq& \Gamma\big(\Gamma^{-1}(K\tr^{-\lambda})\big)\big(|Y^{\tr}(t_k)|^2+\frac{1}{\tau}\int_{-\tau}^{0}|\bar Y^{\tr}_{t_k}(\theta)|^2\mathrm d\theta\big)^{1/2}+|f(\textbf 0)|\nn\\
\leq&2K\tr^{-\lambda}\big(1+|Y^{\tr}(t_k)|^2+\frac{1}{\tau}\int_{-\tau}^{0}|\bar Y^{\tr}_{t_k}(\theta)|^2\mathrm d\theta\big)^{1/2}\nn\\
\leq&2K\tr^{-\lambda}((1-\e_0)\wedge\e_0)^{-1/2}\big(1+(1-\e_0)|Y^{\tr}(t_k)|^2+\frac{\e_0}{\tau}\int_{-\tau}^{0}|\bar Y^{\tr}_{t_k}(\theta)|^2\mathrm d\theta\big)^{1/2},
\end{align*}
which implies
 \begin{align}\la{o3.6}
(\Phi(\bar Y_{t_k}^{\tr}))^{-1}\big|f(\bar Y_{t_k}^{\tr})\big|^2
\leq & L\tr^{-2\lambda}.
\end{align}
Similarly,
\begin{align}\la{o3.6+}
 (\Phi(\bar Y_{t_k}^{\tr}))^{-1}\big|g(\bar Y_{t_k}^{\tr})\big|^2
\leq L\tr^{-\lambda}.
 \end{align}
Due to the definition of $\Phi$,  for the desired assertion
 $\sup_{\tr\in(0, 1]}\sup_{k\geq-N}
  \E|Y^{\tr}(t_k)|^p\leq L$, it is sufficient to prove
\begin{align}\label{o3.10**}\sup_{\tr\in(0, 1]}\sup_{k\geq-N}
  \E(\Phi(\bar Y_{t_k}^{\tr}))^{\frac{p}{2}}\leq L .
\end{align}
In fact, it follows from  \eqref{o3.3} that
\begin{align}\label{o3.10}
|\br Y^{\tr}(t_{k+1})|^2=|Y^{\tr}(t_{k})+f(\bar Y^{\tr}_{t_k})\tr+ g(\bar Y^{\tr}_{t_k})\tr B_{k}|^2
=|Y^{\tr}(t_{k})|^2+\varphi_{k},
\end{align}
where
\begin{align*}
\varphi_{k}:=&|f(\bar Y^{\tr}_{t_k})|^2\tr^2+|g(\bar Y^{\tr}_{t_k}) \tr B_{k}|^2+2 (Y^{\tr}(t_{k}))^T f(\bar Y^{\tr}_{t_k})\tr\nn\\
&+2(Y^{\tr}(t_{k}))^T  g(\bar Y^{\tr}_{t_k})\tr B_{k}
+2 f^T(\bar Y^{\tr}_{t_k})g(\bar Y^{\tr}_{t_k}) \tr B_{k}\tr.
\end{align*}
By virtue of  \eqref{o3.3} and \eqref{o3.4}, we derive from  the convex property of $u(x)=x^2$,
\begin{align}\label{o3.11}
&\frac{1}{\tau}\int_{-\tau}^{0}| \bar Y^{\tr}_{t_{k+1}}(\theta)|^2\mathrm {d}\theta
=\frac{1}{\tau}\int_{-\tau}^{0}|\bar  Y^{\tr}_{t_k}(\theta)|^2\mathrm {d}\theta+\frac{1}{\tau}\int_{-\tr}^{0}| \bar Y^{\tr}_{t_{k+1}}(\theta)|^2\mathrm d\theta-\frac{1}{\tau}\int_{-\tau}^{-\tau+\tr}|\bar  Y^{\tr}_{t_k}(\theta)|^2\mathrm d\theta\nn\\
\leq &\frac{1}{\tau}\int_{-\tau}^{0}| \bar Y^{\tr}_{t_k}(\theta)|^2\mathrm {d}\theta+\frac{1}{\tau}\int_{-\tr}^{0}|\frac{-\theta}{\tr} Y^{\tr}(t_{k})+\frac{\tr+\theta}{\tr} Y^{\tr}(t_{k+1})|^2\mathrm d\theta\nn\\
\leq&\frac{1}{\tau}\int_{-\tau}^{0}|  \bar Y^{\tr}_{t_k}(\theta)|^2\mathrm {d}\theta+{  \frac{\tr}{2\tau}| Y^{\tr}(t_{k})|^2+\frac{\tr}{2\tau}|  Y^{\tr}(t_{k+1})|^2} \nn\\
\leq&\frac{1}{\tau}\int_{-\tau}^{0}|\bar  Y^{\tr}_{t_k}(\theta)|^2\mathrm {d}\theta+\frac{\tr}{2\tau}| Y^{\tr}(t_{k})|^2+\frac{\tr}{2\tau}| Y^{\tr}(t_{k})+ f(\bar Y^{\tr}_{t_k})\tr+  g(\bar Y^{\tr}_{t_k})\tr B_{k}|^2\nn\\
\leq &\frac{1}{\tau}\int_{-\tau}^{0}| \bar Y^{\tr}_{t_k}(\theta)|^2\mathrm {d}\theta+\psi_{k},
\end{align}
where
\begin{align*}
\psi_{k}:={  \frac{2\tr}{\tau}| Y^{\tr}(t_{k})|^2}+\frac{3\tr^3}{2\tau}| f(\bar Y^{\tr}_{t_k})|^2+ \frac{3\tr}{2\tau}| g(\bar Y^{\tr}_{t_k}) \tr B_{k}|^2.
\end{align*}
Hence,  inserting (\ref{o3.10}) and \eqref{o3.11} into \eqref{varphi} yields
\begin{align}\label{o3.12}
\Phi(\bar Y_{t_{k+1}}^{\tr})
\leq&1+(1-\e_0)|\br Y^{\tr}(t_{k+1})|^2+\frac{\e_0}{\tau}\int_{-\tau}^{0}| \bar Y^{\tr}_{t_{k+1}}(\theta)|^2\mathrm {d}\theta\nn\\
\leq&\Phi(\bar Y_{t_k}^{\tr})
+(1-\e_0)\varphi_{k}+\e_0\psi_{k}
=\Phi(\bar Y_{t_k}^{\tr})(1+\Theta_{k}),
\end{align}
where
\begin{align*}
\Theta_{k}:=\big(\Phi(\bar Y_{t_k}^{\tr})\big)^{-1}\big((1-\e_0)\varphi_{k}+\e_0\psi_{k}\big),
\end{align*}
and it is easy to see that $\Theta_k > -1$ a.s.
Making use of \cite[Lemma 3.3]{yang2018} and  (\ref{o3.12}) we derive
\begin{align}\la{o3.13}
 \E \Big(\big(\Phi(\bar Y_{t_{k+1}}^{\tr})\big)^{\frac{p}{2}}\big|\mathcal{F}_{t_{k}}\Big)
   \leq& \big(\Phi(\bar Y_{t_k}^{\tr})\big)^{\frac{p}{2}}\Big(1 + \frac{p}{2} \E\big(\Theta_{k}\big|\mathcal{F}_{t_{k}}\big)+ \frac{p(p-2)}{8} \E\big(\Theta_{k}^2\big|\mathcal{F}_{t_{k}}\big)\nn\\
    &+  \E\big(\Theta_{k}^3P_{\ell}(\Theta_{k})\big|\mathcal{F}_{t_{k}}\big)\Big),
\end{align}
where  $\ell\in \mathbb N$ satisfying $2\ell<p\leq 2(\ell+1)$,
 and $P_{\ell}(\cdot)$ is an $\ell$th-order polynomial.
In order to estimate $ \E(\Phi(\bar Y_{t_{k+1}}^{\tr}))^{\frac{p}{2}}$, we prepare several elementary inequalities on $\Theta_{k}$.

\begin{lemma}\la{esti}
Assume that $(\textup{A}1)$ holds.
Then for any  $k\in\mathbb N$,
\begin{align}\la{o3.22}
\E\big(\Theta_{k}\big|\mathcal{F}_{t_{k}}\big)
\leq(1-\e_0) \big(\Phi(\bar Y_{t_k}^{\tr})\big)^{-1}\big(2(Y^{\tr}(t_{k}))^T f(\bar Y^{\tr}_{t_k})+|g(\bar Y^{\tr}_{t_k})|^2\big)\tr+L\tr,
\end{align}
\begin{align}\la{o3.23}
\E\big(\Theta_{k}^2\big|\mathcal{F}_{t_{k}}\big)
\leq 4(1-\e_0)^2\big(\Phi(\bar Y_{t_k}^{\tr})\big)^{-2}\big| (Y^{\tr}(t_{k}))^Tg(\bar Y^{\tr}_{t_k})\big|^2\tr+L\tr,
\end{align}
\begin{align}\label{o3.24}
\E\big(\Theta_{k}^3P_{\ell}(\Theta_{k})\big|\mathcal{F}_{t_{k}}\big)
\leq L\tr.
\end{align}
\end{lemma}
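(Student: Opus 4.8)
The plan is to separate inside $\Theta_k$ the part that is linear in the Brownian increment $\tr B_k$ (which is conditionally centred) from the part that is quadratic in $\tr B_k$ together with an $\mathcal F_{t_k}$‑measurable piece, and to estimate each piece via the elementary facts $\E(\tr B_k\mid\mathcal F_{t_k})=0$, $\E(\tr B_k(\tr B_k)^{T}\mid\mathcal F_{t_k})=\tr I_d$, $\E(|\tr B_k|^{j}\mid\mathcal F_{t_k})\le L\tr^{j/2}$, the a priori bounds \eqref{o3.6}, \eqref{o3.6+}, and the trivial inequality $(\Phi(\bar Y^{\tr}_{t_k}))^{-1}|Y^{\tr}(t_k)|^2\le(1-\e_0)^{-1}$ (which follows from $\bar Y^{\tr}_{t_k}(0)=Y^{\tr}(t_k)$ and \eqref{varphi}). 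Writing $f,g,\Phi$ for $f(\bar Y^{\tr}_{t_k}),g(\bar Y^{\tr}_{t_k}),\Phi(\bar Y^{\tr}_{t_k})$ and collecting the terms of $\varphi_k$ and $\psi_k$ from the definitions following \eqref{o3.10}--\eqref{o3.12}, I would split $(1-\e_0)\varphi_k+\e_0\psi_k=L_k+Q_k+D_k$ with
\[
L_k:=2(1-\e_0)\big((Y^{\tr}(t_k))^{T}+\tr f^{T}\big)g\,\tr B_k,\qquad
Q_k:=\Big((1-\e_0)+\tfrac{3\e_0\tr}{2\tau}\Big)|g\,\tr B_k|^{2},
\]
$D_k$ being the remaining ($\mathcal F_{t_k}$‑measurable) terms, and then $\Theta_k=U_k+V_k$ with $U_k:=\Phi^{-1}L_k$ and $V_k:=\Phi^{-1}(Q_k+D_k)$.

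For \eqref{o3.22} I would take the conditional expectation directly: the terms odd in $\tr B_k$ drop out, $\E(|g\,\tr B_k|^{2}\mid\mathcal F_{t_k})=|g|^{2}\tr$, and the surviving $\tr$‑order contribution is exactly $(1-\e_0)\Phi^{-1}\big(2(Y^{\tr}(t_k))^{T}f+|g|^{2}\big)\tr$; every remaining term carries an extra factor $\tr$, $\tr^{2-\lambda}$, $\tr^{2-2\lambda}$ or $\tr^{3-2\lambda}$, hence is $\le L\tr$ once \eqref{o3.6}, \eqref{o3.6+} and $\Phi^{-1}|Y^{\tr}(t_k)|^2\le(1-\e_0)^{-1}$ are invoked, because $\lambda\in(0,1/2)$. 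For \eqref{o3.23} I would expand $\Theta_k^{2}=\Phi^{-2}(L_k+Q_k+D_k)^{2}$; the cross terms $L_kD_k$ and $L_kQ_k$ are odd in $\tr B_k$ and vanish under $\E(\cdot\mid\mathcal F_{t_k})$, whereas $\E(L_k^{2}\mid\mathcal F_{t_k})=4(1-\e_0)^{2}\tr\,\big|\big((Y^{\tr}(t_k))^{T}+\tr f^{T}\big)g\big|^{2}$, and expanding this square produces precisely the leading term $4(1-\e_0)^{2}\Phi^{-2}|(Y^{\tr}(t_k))^{T}g|^{2}\tr$ of \eqref{o3.23} plus corrections of order $\tr^{2-2\lambda}$ and $\tr^{3-3\lambda}$; the pieces $\Phi^{-2}\E(Q_k^{2}\mid\mathcal F_{t_k})$, $\Phi^{-2}D_k^{2}$ and $\Phi^{-2}\E(Q_kD_k\mid\mathcal F_{t_k})$ are all $\le L\tr$ by the same bounds. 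This gives \eqref{o3.22} and \eqref{o3.23}.

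The main obstacle is \eqref{o3.24}: the crude estimate $\E(|\Theta_k|^{3}\mid\mathcal F_{t_k})$ is only of order $\tr^{3(1-\lambda)/2}$, which is \emph{not} $\le L\tr$ when $\lambda$ is close to $1/2$, so one must exploit the vanishing of the odd moments of $U_k$. First, Cauchy--Schwarz combined with \eqref{o3.6}, \eqref{o3.6+} and $\Phi^{-1}|Y^{\tr}(t_k)|^{2}\le(1-\e_0)^{-1}$ gives $|U_k|\le L\tr^{-\lambda/2}|\tr B_k|$ and $|V_k|\le L\big(\tr^{-\lambda}|\tr B_k|^{2}+\tr^{1-\lambda}\big)$, hence $\E(|U_k|^{j}\mid\mathcal F_{t_k})\le L\tr^{j(1-\lambda)/2}$ and $\E(|V_k|^{j}\mid\mathcal F_{t_k})\le L\tr^{j(1-\lambda)}$ for all $j\ge1$, while $\E(U_k^{j}\mid\mathcal F_{t_k})=0$ for odd $j$ since $U_k$ is linear in the centred Gaussian $\tr B_k$. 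Next, $\Theta_k^{3}P_{\ell}(\Theta_k)=(U_k+V_k)^{3}P_{\ell}(U_k+V_k)$ is a finite linear combination, with coefficients depending only on the fixed $\ell$, of monomials $U_k^{a}V_k^{b}$ with $a+b\ge3$. For the pure powers ($b=0$), those with $a$ odd vanish under $\E(\cdot\mid\mathcal F_{t_k})$, and those with $a$ even (so $a\ge4$) satisfy $\E(U_k^{a}\mid\mathcal F_{t_k})\le L\tr^{a(1-\lambda)/2}\le L\tr^{2(1-\lambda)}\le L\tr$. For every mixed monomial ($b\ge1$) one has $a/2+b\ge2$, so H\"older's inequality yields
\[
\E\big(|U_k^{a}V_k^{b}|\mid\mathcal F_{t_k}\big)\le\big(\E(|U_k|^{a+b}\mid\mathcal F_{t_k})\big)^{\frac{a}{a+b}}\big(\E(|V_k|^{a+b}\mid\mathcal F_{t_k})\big)^{\frac{b}{a+b}}\le L\tr^{(1-\lambda)(a/2+b)}\le L\tr^{2(1-\lambda)}\le L\tr,
\]
the last two inequalities using $\lambda<1/2$. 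Summing the finitely many contributions proves \eqref{o3.24}, and hence the lemma. The only genuinely delicate point is the bookkeeping in this last step — tracking which monomials are odd in $\tr B_k$ and hence drop out — since without that cancellation the cubic term alone would be too large near $\lambda=1/2$.
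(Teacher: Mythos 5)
Your proof is correct and follows essentially the same route as the paper's: both rest on the conditional moment identities for the increments $\tr B_k$ (in particular the vanishing of odd conditional moments, which is exactly what rescues the cubic term when $\lambda$ is near $1/2$), the truncation bounds \eqref{o3.6}--\eqref{o3.6+}, and $\lambda\in(0,1/2)$. The only difference is bookkeeping: the paper organizes the expansion by powers and cross-products of $\varphi_k$ and $\psi_k$ (its estimates \eqref{o3.16}--\eqref{o3.20}), whereas you group terms by their degree in $\tr B_k$, which is an equivalent and arguably slightly cleaner way to isolate the terms that cancel.
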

\begin{proof}\textbf{Proof.}
We use  the similar technique as in the proof of \cite[Theorem 3.1]{Song-Li2021}.
The fact that $B_k$ is  independent of $\mathcal{F}_{t_{k}}$
leads to
\begin{align*}
&\E\big((A \tr B_{k})|\mathcal{F}_{t_{k}}\big)=0,
~~~~\E\big(|A\tr B_{k}|^{2}|\mathcal{F}_{t_{k}}\big)=|A|^2\tr, ~~~A\in\mathbb{R}^{n\times d}.
\end{align*}
This implies
\begin{align*}
&\big(\Phi(\bar Y_{t_k}^{\tr})\big)^{-1}\E\big(\varphi_{k}\big|\mathcal{F}_{t_{k}}\big)\nn\\
=& \big(\Phi(\bar Y_{t_k}^{\tr})\big)^{-1}\Big(2(Y^{\tr}(t_{k}))^T f(\bar Y^{\tr}_{t_k})+|g(\bar Y^{\tr}_{t_k})|^2\Big)\tr+\big(\Phi(\bar Y_{t_k}^{\tr})\big)^{-1}|f(\bar Y^{\tr}_{t_k})|^2\tr^2.
\end{align*}
It follows from  \eqref{o3.6} and $\lambda\in(0,1/2)$ that
\begin{align}\label{o3.16}
\big(\Phi(\bar Y_{t_k}^{\tr})\big)^{-1}\E\big(\varphi_{k}\big|\mathcal{F}_{t_{k}}\big)
=\big(\Phi(\bar Y_{t_k}^{\tr})\big)^{-1}\Big(2(Y^{\tr}(t_{k}))^Tf(\bar Y^{\tr}_{t_k})+|g(\bar Y^{\tr}_{t_k})|^2\Big)\tr+L\tr.
\end{align}
One observes
\begin{align*}
\E\big((A \tr B_{k})^{2i-1}|\mathcal{F}_{t_{k}}\big)=0,~~\E\big(|A\tr B_{k}|^{2i}|\mathcal{F}_{t_{k}}\big)\leq L\tr^{i}, ~~~A\in\mathbb{R}^{1\times d},~i\in\mathbb N  ~\hbox{with}~i\geq2,
\end{align*}
These together with \eqref{o3.6} and \eqref{o3.6+} imply
\begin{align}\label{o3.17}
\big(\Phi(\bar Y_{t_k}^{\tr})\big)^{-2}\E\big(\varphi_{k}^{2}\big|\mathcal{F}_{t_{k}}\big)
\leq4\big(\Phi(\bar Y_{t_k}^{\tr})\big)^{-2}\big|(Y^{\tr}(t_{k}))^Tg(\bar Y^{\tr}_{t_k})\big|^2\tr+L\tr,
\end{align}
\begin{align}\label{o3.18}
\pm\big(\Phi(\bar Y_{t_k}^{\tr})\big)^{-j}\E\big(\varphi_{k}^j\big|\mathcal{F}_{t_{k}}\big)
\leq L\tr,~~~j\in \mathbb N ~\hbox{with}~j\geq3.
\end{align}
For any  positive integer $i$,
\begin{align}\label{o3.21}
&\big(\Phi(\bar Y_{t_k}^{\tr})\big)^{-{i}}\E\big(\psi_{k}^{{i}}\big|\mathcal{F}_{t_{k}}\big)\nn\\
\leq &L\big(\Phi(\bar Y_{t_k}^{\tr})\big)^{-{i}} \Big(\tr^{i}|Y^{\tr}(t_{k})|^{2{i}}
+\tr^{3i}|f(\bar Y^{\tr}_{t_k})|^{2{i}}+ \tr^{2i}|g(\bar Y^{\tr}_{t_k})|^{2{i}} \Big)
\leq L\tr.
\end{align}
Similarly, for any positive integer $i$ and $j$,
\begin{align}\label{o3.20}
\big(\Phi(\bar Y_{t_k}^{\tr})\big)^{-({i}+{j})}\E\big(|\varphi_{k}|^{i}\psi_{k}^{j}
\big|\mathcal{F}_{t_{k}}\big)
\leq L\tr.
\end{align}
Recalling the definition of $\Theta_{k}$, we derive  from \eqref{o3.16} and \eqref{o3.21} that
\begin{align*}
  \E\big(\Theta_{k}\big|\mathcal{F}_{t_{k}}\big)
\leq &(1-\e_0)\big(\Phi(\bar Y_{t_k}^{\tr})\big)^{-1}\big(2(Y^{\tr}(t_{k})^Tf(\bar Y^{\tr}_{t_k})+|g(\bar Y^{\tr}_{t_k})|^2\big)\tr+L\tr.
\end{align*}
Using \eqref{o3.17},  \eqref{o3.21}, \eqref{o3.20}, and the nonnegativity of $\psi_{k}$ yields
\begin{align*}
\E\big(\Theta_{k}^2\big|\mathcal{F}_{t_{k}}\big)
\leq& \big(\Phi(\bar Y_{t_k}^{\tr})\big)^{-2}\E\Big(\big((1-\e_0)^2\varphi_{k}^2
+2|\varphi_{k}|\psi_{k}
+\psi_{k}^2\big)\big|\mathcal{F}_{t_{k}}\Big)\nn\\
\leq& 4(1-\e_0)^2\big(\Phi(\bar Y_{t_k}^{\tr})\big)^{-2}\big|(Y^{\tr}(t_{k}))^Tg(\bar Y^{\tr}_{t_k})\big|^2\tr+L\tr.
\end{align*}
By \eqref{o3.18}--\eqref{o3.20} we obtain that for any integer $j\geq 3$ and  $c\in \RR$,
\begin{align*}
c\E\big( \Theta_{k}^j\big|\mathcal{F}_{t_{k}}\big)
\leq\big(\Phi(\bar Y_{t_k}^{\tr})\big)^{-j}\E\Big(\big( c\varphi_{k}^j+|c|\sum_{i=1}^{j}\frac{j!}{i!(j-i)!}|\varphi_{k}|^{j-i}\psi_{k}^{i}\big|\mathcal{F}_{t_{k}}\Big)
\leq L\tr,
\end{align*}
which implies that \eqref{o3.24} holds.
Therefore the desired results follows.
\end{proof}

To prove the moment boundedness of the numerical solutions we gives the following inequality.
 \begin{lemma}\label{+r}
Assume that \textup{(A2)} holds. Then for any $\phi\in \mathcal C$ and $c>0$,
\begin{align}\la{re1+4}
&c\big(\Phi(\phi)\big)^{\frac{p}{2}}
+\frac{(1-\e_0)p}{2}\big(\Phi(\phi)\big)^{\frac{p-2}{2}}
\big(2(\phi(0))^T f(\phi)+(p-1)|g(\phi)|^2\big)\nn\\
\leq & L-\a_1|\phi(0)|^{p+\varrho}
+\frac{\a_2}{\tau}\int_{-\tau}^{0}|\phi(\theta)|^{p+\varrho}\mathrm d\theta+\a_3\int_{-\tau}^{0}|\phi(\theta)|^{p+\varrho}\rho_1(\theta)\mathrm d\theta,
\end{align}
where   $\alpha_i,~ i=1,2,3, $ are positive constants
  given by \eqref{akappa}, and
satisfy $ \a_2+\a_3< \a_1$.
\end{lemma}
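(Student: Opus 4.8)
The plan is to reduce the claimed inequality to the Khasminskii-type bound \eqref{assA2} by controlling the powers that appear. First I would fix the constant $c>0$ and expand $\big(\Phi(\phi)\big)^{p/2}$ and $\big(\Phi(\phi)\big)^{(p-2)/2}$ using the definition \eqref{varphi} of $\Phi$. The key elementary tool is the inequality $(a+b+c)^{q}\le L_q(a^q+b^q+c^q)$ for $q\ge 1$ applied to $\Phi(\phi)^{p/2}$, together with the elementary fact that for $q\ge 1$ and nonnegative $u$, $u^{q}\le \e u^{q+\delta}+C_\e$ (Young's inequality), which lets me absorb the $|\phi(0)|^{p}$ term coming from $c\Phi^{p/2}$ into a small multiple of $|\phi(0)|^{p+\varrho}$ plus a constant; similarly the integral term $\big(\tau^{-1}\int|\phi(\theta)|^{2}\mathrm d\theta\big)^{p/2}$ is bounded, via Jensen/H\"older, by $\tau^{-1}\int|\phi(\theta)|^{p}\mathrm d\theta$ and then by $\e\,\tau^{-1}\int|\phi(\theta)|^{p+\varrho}\mathrm d\theta + C_\e$.

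Next I would handle the middle term. Write $\big(\Phi(\phi)\big)^{(p-2)/2}\big(2(\phi(0))^Tf(\phi)+(p-1)|g(\phi)|^2\big)$ and substitute the bound \eqref{assA2}; this produces a product of $\Phi(\phi)^{(p-2)/2}$ (which is, up to constants, $1+|\phi(0)|^{p-2}+(\tau^{-1}\int|\phi(\theta)|^2\mathrm d\theta)^{(p-2)/2}$) with the right-hand side of \eqref{assA2}, whose dominant good term is $-a_2|\phi(0)|^{2+\varrho}$. Multiplying $|\phi(0)|^{p-2}$ against $-a_2|\phi(0)|^{2+\varrho}$ gives $-a_2|\phi(0)|^{p+\varrho}$; multiplying $|\phi(0)|^{p-2}$ against $a_3\int|\phi(\theta)|^{2+\varrho}\rho_1(\theta)\mathrm d\theta$ and against $a_1(1+|\phi(0)|^2+\cdots)$ produces cross terms of lower total degree that I split by Young's inequality so that the part charged to $|\phi(0)|^{p+\varrho}$ carries an arbitrarily small coefficient, and the part charged to $\int|\phi(\theta)|^{p+\varrho}\mathrm d\theta$ or $\int|\phi(\theta)|^{p+\varrho}\rho_1(\theta)\mathrm d\theta$ also carries a small coefficient. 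This is exactly the bookkeeping already performed in the proof of \thmref{oth1} (the passage producing \eqref{o2.10+1}), so I would follow that template, tracking the coefficients symbolically as $\alpha_1$ (the net coefficient of $-|\phi(0)|^{p+\varrho}$), $\alpha_2$ (coefficient of $\tau^{-1}\int|\phi(\theta)|^{p+\varrho}\mathrm d\theta$), $\alpha_3$ (coefficient of $\int|\phi(\theta)|^{p+\varrho}\rho_1(\theta)\mathrm d\theta$), absorbing all nonnegative-power leftovers into the generic constant $L$. I would record the resulting explicit formulas in a labeled display \eqref{akappa}.

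The main obstacle, and the only non-mechanical point, is verifying the strict inequality $\alpha_2+\alpha_3<\alpha_1$. The leading contribution to $\alpha_1$ is $\tfrac{(1-\e_0)p}{2}a_2$ from the term $-a_2|\phi(0)|^{p+\varrho}$ (the $|\phi(0)|^{p-2}$ factor of $\Phi^{(p-2)/2}$ matched against $-a_2|\phi(0)|^{2+\varrho}$), while the leading contributions to $\alpha_2+\alpha_3$ come from $a_3\int|\phi(\theta)|^{2+\varrho}\rho_1\mathrm d\theta$ multiplied by the $|\phi(0)|^{p-2}$ and constant parts of $\Phi^{(p-2)/2}$, after redistributing via Young's inequality; these are controlled by $\tfrac{(1-\e_0)p}{2}a_3$ up to terms that can be made small by the free small parameters $\e$. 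Since $a_2>a_3$ by (A2), choosing all the Young parameters small enough makes $\alpha_2+\alpha_3$ strictly below $\alpha_1$; the constant $\e_0$, already fixed through \eqref{e0}, only rescales both sides by the common factor $(1-\e_0)$ and does not interfere. I would make this margin explicit by first discarding the small-$\e$ corrections and checking $\alpha_2+\alpha_3<\alpha_1$ in the limiting case, then invoking continuity in $\e$. All the remaining $|\phi(0)|^{p}$-type and lower-order terms, including the entire contribution of $c\Phi(\phi)^{p/2}$, contribute only to $L$, to the small-coefficient parts, or cancel, completing the proof.
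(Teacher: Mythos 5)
Your overall route is the same as the paper's: bound the drift--diffusion combination by \eqref{assA2}, redistribute the mismatched powers with Young's inequality, absorb everything of order $\le p$ into $L$ plus small multiples of the $(p+\varrho)$-order terms, and verify $\a_2+\a_3<\a_1$ by passing to the limiting case of the free parameters. Two points in your bookkeeping, however, are exactly where the proof can break, and the paper uses specific devices to avoid them. First, you may not expand $\big(\Phi(\phi)\big)^{q}$ as ``up to constants, $1+|\phi(0)|^{2q}+(\tau^{-1}\int|\phi|^2)^{q}$'' and multiply term by term: the generic constant $L_q=3^{q-1}$ would multiply the coefficient that ends up in $\a_3$, and for large $p$ this destroys $\a_2+\a_3<\a_1$. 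The paper instead applies Young directly to the product $\big(\Phi(\phi)\big)^{\frac{p-2}{2}}\int|\phi|^{2+\varrho}\rho_1\,\mathrm d\theta$ with the exact weights $\frac{p-2}{p+\varrho}+\frac{2+\varrho}{p+\varrho}=1$, keeps $\big(\Phi(\phi)\big)^{\frac{p+\varrho}{2}}$ intact, and only then splits it using $|1+b|^{q}\le \bar L+(1+\k)|b|^{q}$ (\cite[p.211, Lemma 4.1]{Mao2007}) followed by convexity with the weights $(1-\e_0)+\e_0=1$ — every splitting carries total weight $1$ or $1+\k$, never a generic $L_q>1$. (Also note that against the negative term $-a_2|\phi(0)|^{2+\varrho}$ you need a \emph{lower} bound $\big(\Phi(\phi)\big)^{\frac{p-2}{2}}\ge (1-\e_0)^{\frac{p-2}{2}}|\phi(0)|^{p-2}$, not an upper expansion.)

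Second, your leading-order accounting is not literally correct: the good coefficient is $\frac{p}{2}a_2(1-\e_0)^{p/2}$ (not $\frac{(1-\e_0)p}{2}a_2$), and $\a_2+\a_3$ is \emph{not} controlled by $\frac{(1-\e_0)p}{2}a_3$ — already for $p=2$ the limiting value of $\a_2+\a_3$ is $a_3$ itself. What actually saves the day is that the Young weights telescope exactly, $(1-\e_0)\frac{p-2}{p+\varrho}+\e_0\frac{p-2}{p+\varrho}+\frac{2+\varrho}{p+\varrho}=1$, so the limiting requirement is $a_2(1-\e_0)^{p/2}>a_3$, which follows from \eqref{e0}, i.e. $a_2(1-\e_0)^{p/2}>\frac{a_2+a_3}{2}>a_3$; the margin $\k$ in \eqref{kp} then absorbs the $(1+\k)$ losses. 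Your proposed ``check the limiting case, then invoke continuity'' does succeed, but only by virtue of this identity, so you must track the weights exactly rather than argue that each bad contribution is separately small.
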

\begin{proof}\textbf{Proof.}
For any $\phi\in \mathcal C$, making use of \eqref{assA2} and \eqref{varphi} leads to
\begin{align*}
&\big(\Phi(\phi)\big)^{\frac{p-2}{2}}
\big(2(\phi(0))^T f(\phi)+(p-1)|g(\phi)|^2\big)\nn\\
\leq&a_1\big(\Phi(\phi)\big)^{\frac{p-2}{2}}\big(1+|\phi(0)|^{2}
+\frac{1}{\tau}\int_{-\tau}^{0}|\phi(\theta)|^{2}\mathrm d\theta\big)
-a_2\big(\Phi(\phi)\big)^{\frac{p-2}{2}}|\phi(0)|^{2+\varrho}\nn\\
&+a_3\big(\Phi(\phi)\big)^{\frac{p-2}{2}}\int_{-\tau}^{0}|\phi(\theta)|^{2+\varrho}\rho_1(\theta)\mathrm d\theta\nn\\
\leq&a_1((1-\e_0)\wedge\e_0)^{-1}\big(\Phi(\phi)\big)^{\frac{p}{2}}
-a_2(1-\e_0)^{\frac{p-2}{2}}|\phi(0)|^{p+\varrho}\nn\\
&+a_3\big(\Phi(\phi)\big)^{\frac{p-2}{2}}\int_{-\tau}^{0}|\phi(\theta)|^{2+\varrho}\rho_1(\theta)\mathrm d\theta.
\end{align*}
Applying the Young inequality yields that for any $c>0$,
\begin{align}\la{re1+2}
&c\big(\Phi(\phi)\big)^{\frac{p}{2}}
+\frac{(1-\e_0)p}{2}\big(\Phi(\phi)\big)^{\frac{p-2}{2}}
\big(2(\phi(0))^T f(\phi)+(p-1)|g(\phi)|^2\big)\nn\\
\leq& L\big(\Phi(\phi)\big)^{\frac{p}{2}}-\frac{pa_2(1-\e_0)^{\frac{p}{2}}}{2}|\phi(0)|^{p+\varrho}
+\frac{pa_3(p-2)}{2(p+\varrho)}\big(\Phi(\phi)\big)^{\frac{p+\varrho}{2}}\nn\\
&+\frac{pa_3(2+\varrho)}{2(p+\varrho)}\big(\int_{-\tau}^{0}|\phi(\theta)|^{2+\varrho}\rho_1(\theta)\mathrm d\theta\big)^{\frac{p+\varrho}{2+\varrho}}\nn\\
\leq& L-\frac{pa_2(1-\e_0)^{\frac{p}{2}}}{2}|\phi(0)|^{p+\varrho}
+\frac{p}{2}(\frac{a_3(p-2)}{p+\varrho}+\frac{a_2-a_3}{2})\big(\Phi(\phi)\big)^{\frac{p+\varrho}{2}}\nn\\
&+\frac{pa_3(2+\varrho)}{2(p+\varrho)}\big(\int_{-\tau}^{0}|\phi(\theta)|^{2+\varrho}\rho_1(\theta)\mathrm d\theta\big)^{\frac{p+\varrho}{2+\varrho}}.
\end{align}
By virtue of \eqref{e0}, choose a  $\kappa\in(0,1)$ sufficiently small such that
\begin{align}\la{kp}
\frac{(a_2+a_3)(1+\kappa)}{2}<a_2(1-\e_0)^{\frac{p}{2}}.
\end{align}
According to \textup{\cite[p.211, Lemma 4.1]{Mao2007}} we know that for any $a,~b\in\RR$, and  $q\geq1$,  there is a constant $\bar L=\bar L(\k)>1$ such that
\begin{align*}
|a+b|^{q}\leq \bar L|a|^{q}+(1+\k)|b|^{q}.
\end{align*}
The above inequality together with the convex property of $u(x)=x^{\frac{p+\varrho}{2}}$ implies
\begin{align}\la{re1+1}
\big(\Phi(\phi)\big)^{\frac{p+\varrho}{2}}
\leq& \bar L
+(1+\kappa)\big((1-\e_0)|\phi(0)|^2
+\frac{\e_0}{\tau}\int_{-\tau}^{0}|\phi(\theta)|^2\mathrm{d}\theta\big)^{\frac{p+\varrho}{2}}\nn\\
\leq&\bar L
+(1+\kappa)\Big((1-\e_0)|\phi(0)|^{p+\varrho}
+\e_0(\frac{1}{\tau}\int_{-\tau}^{0}|\phi(\theta)|^2\mathrm{d}\theta\big)^{\frac{p+\varrho}{2}}\Big).
\end{align}
Inserting \eqref{re1+1} into \eqref{re1+2} and  using the H\"{o}lder inequality we arrive at
\begin{align*}
&c\big(\Phi(\phi)\big)^{\frac{p}{2}}
+\frac{(1-\e_0)p}{2}\big(\Phi(\phi)\big)^{\frac{p-2}{2}}
\big(2(\phi(0))^T f(\phi)+(p-1)|g(\phi)|^2\big)\nn\\
\leq&L
-\frac{p}{2}\big(a_2(1-\e_0)^{\frac{p}{2}}
-(1+\k)(1-\e_0)\Big(\frac{a_3(p-2)}{p+\varrho}+\frac{a_2-a_3}{2} \Big)|\phi(0)|^{p+\varrho}\nn\\
&+\frac{p(1+\k)\e_0}{2}(\frac{a_3(p-2)}{p+\varrho}+\frac{a_2-a_3}{2})
\big(\frac{1}{\tau}\int_{-\tau}^{0}|\phi(\theta)|^{2}\mathrm d\theta\big)^{\frac{p+\varrho}{2}}\nn\\
&+\frac{pa_3(2+\varrho)}{2(p+\varrho)}\Big(\int_{-\tau}^{0}|\phi(\theta)|^{2+\varrho}\rho_1(\theta)\mathrm d\theta\Big)^{\frac{p+\varrho}{2+\varrho}}\nn\\
\leq& L-\a_1|\phi(0)|^{p+\varrho}
+\frac{\a_2}{\tau}\int_{-\tau}^{0}|\phi(\theta)|^{p+\varrho}\mathrm d\theta+\a_3\int_{-\tau}^{0}|\phi(\theta)|^{p+\varrho}\rho_1(\theta)\mathrm d\theta,
\end{align*}
where
\begin{align}\la{akappa}
&\a_1:=\frac{p}{2}\big(a_2(1-\e_0)^{\frac{p}{2}}
-(1+\k)(1-\e_0)(\frac{a_3(p-2)}{p+\varrho}+\frac{a_2-a_3}{2})\big),\nn\\
&\a_2:=\frac{p(1+\k)\e_0}{2}(\frac{a_3(p-2)}{p+\varrho}+\frac{a_2-a_3}{2}),\nn\\
&\a_3:=\frac{a_3p(2+\varrho)}{2(p+\varrho)}.
 \end{align}
It follows from \eqref{kp} that
 $0< \a_2+\a_3< \a_1$.
The proof is therefore complete.
\end{proof}

  \begin{theorem}\la{oth2}
Assume that $(\textup{A}1)$ and $(\textup{A}2)$ hold.  Then,
   \be\la{+o3.9}
  \sup_{\tr\in(0, 1]}\sup_{k\geq-N}
  \E|Y^{\tr}(t_{k})|^{p}\leq L.
   \ee
  \end{theorem}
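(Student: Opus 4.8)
The plan is to establish the bound \eqref{o3.10**}, namely $\sup_{\tr\in(0,1]}\sup_{k\geq-N}\E(\Phi(\bar Y^{\tr}_{t_k}))^{p/2}\leq L$, from which \eqref{+o3.9} follows immediately since $|Y^{\tr}(t_k)|^2\leq(1-\e_0)^{-1}\Phi(\bar Y^{\tr}_{t_k})$ by the definition \eqref{varphi} of $\Phi$. For $k=-N,\dots,0$ the bound is trivial because $\bar Y^{\tr}_{t_k}$ is built from the (deterministic, bounded) initial data $\xi$. For $k\geq 0$ I would start from the one-step recursion \eqref{o3.13}, insert the three estimates \eqref{o3.22}, \eqref{o3.23}, \eqref{o3.24} from \lemref{esti}, and then feed the resulting drift term through \lemref{+r}.

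Concretely, plug the bounds of \lemref{esti} into \eqref{o3.13}. The $\Theta_k^3P_\ell(\Theta_k)$ term contributes only $L\tr$. The $\frac{p}{2}\E(\Theta_k\mid\mathcal F_{t_k})$ term contributes $\frac{(1-\e_0)p}{2}(\Phi(\bar Y^{\tr}_{t_k}))^{-1}(2(Y^{\tr}(t_k))^Tf(\bar Y^{\tr}_{t_k})+|g(\bar Y^{\tr}_{t_k})|^2)\tr$ plus $L\tr$; note $|g|^2\leq(p-1)|g|^2$ since $p\geq 2$, so this is dominated by the left-hand-side pattern in \eqref{re1+4} with $\phi=\bar Y^{\tr}_{t_k}$. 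The $\frac{p(p-2)}{8}\E(\Theta_k^2\mid\mathcal F_{t_k})$ term contributes at most $\frac{p(p-2)}{2}(1-\e_0)^2(\Phi(\bar Y^{\tr}_{t_k}))^{-2}|(Y^{\tr}(t_k))^Tg(\bar Y^{\tr}_{t_k})|^2\tr+L\tr$; by Cauchy--Schwarz $|(Y^{\tr}(t_k))^Tg(\bar Y^{\tr}_{t_k})|^2\leq|Y^{\tr}(t_k)|^2|g(\bar Y^{\tr}_{t_k})|^2\leq(1-\e_0)^{-1}\Phi(\bar Y^{\tr}_{t_k})|g(\bar Y^{\tr}_{t_k})|^2$, so this too is absorbed into the $|g|^2$ term on the left of \eqref{re1+4} after adjusting the constant $c$. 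Multiplying through by $(\Phi(\bar Y^{\tr}_{t_k}))^{p/2}$, the whole bracket in \eqref{o3.13} takes the form $(\Phi(\bar Y^{\tr}_{t_k}))^{p/2}+\tr\,\big[\,c(\Phi(\bar Y^{\tr}_{t_k}))^{p/2}+\frac{(1-\e_0)p}{2}(\Phi(\bar Y^{\tr}_{t_k}))^{(p-2)/2}(2(Y^{\tr}(t_k))^Tf+(p-1)|g|^2)\,\big]+L\tr$ for a suitable $c>0$, and \lemref{+r} bounds the bracketed drift by $L-\a_1|Y^{\tr}(t_k)|^{p+\varrho}+\frac{\a_2}{\tau}\int_{-\tau}^0|\bar Y^{\tr}_{t_k}(\theta)|^{p+\varrho}\mathrm d\theta+\a_3\int_{-\tau}^0|\bar Y^{\tr}_{t_k}(\theta)|^{p+\varrho}\rho_1(\theta)\mathrm d\theta$.

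Taking expectations, I obtain a recursion of the shape
\begin{align*}
\E(\Phi(\bar Y^{\tr}_{t_{k+1}}))^{p/2}\leq \E(\Phi(\bar Y^{\tr}_{t_k}))^{p/2}+L\tr-\a_1\tr\,\E|Y^{\tr}(t_k)|^{p+\varrho}+(\a_2+\a_3)\tr\,\E\Big(\frac1\tau\int_{-\tau}^0|\bar Y^{\tr}_{t_k}(\theta)|^{p+\varrho}\mathrm d\theta\Big),
\end{align*}
where I have used $\int_{-\tau}^0|\cdot|^{p+\varrho}\rho_1\,\mathrm d\theta\leq L\int_{-\tau}^0|\cdot|^{p+\varrho}\mathrm d\theta$ if needed, or kept $\rho_1\in\mathcal W$ and bounded $\int\rho_1=1$ appropriately; the point is that the delayed $(p+\varrho)$-term carries total weight $\a_2+\a_3<\a_1$. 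Now summing from $0$ to $k-1$ and using that $\bar Y^{\tr}_{t_j}(\theta)$ linearly interpolates the nodal values $Y^{\tr}(t_i)$, the convexity of $x\mapsto|x|^{p+\varrho}$ gives $\frac1\tau\int_{-\tau}^0|\bar Y^{\tr}_{t_j}(\theta)|^{p+\varrho}\mathrm d\theta\leq\frac1N\sum_{i=j-N}^{j}|Y^{\tr}(t_i)|^{p+\varrho}$ (up to endpoint bookkeeping), so the telescoped delayed sum $\tr\sum_{j=0}^{k-1}\frac1\tau\int|\bar Y^{\tr}_{t_j}|^{p+\varrho}$ is bounded by $\tr\sum_{i=-N}^{k-1}|Y^{\tr}(t_i)|^{p+\varrho}$ plus a constant coming from the initial segment $\tr\sum_{i=-N}^{-1}|\xi(t_i)|^{p+\varrho}\leq\tau\|\xi\|^{p+\varrho}$. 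Since $\a_2+\a_3<\a_1$, the nonnegative term $-\a_1\tr\sum_i\E|Y^{\tr}(t_i)|^{p+\varrho}$ more than cancels $(\a_2+\a_3)\tr\sum_i\E|Y^{\tr}(t_i)|^{p+\varrho}$, leaving
\begin{align*}
\E(\Phi(\bar Y^{\tr}_{t_k}))^{p/2}\leq \E(\Phi(\bar Y^{\tr}_{t_0}))^{p/2}+L+Lk\tr-(\a_1-\a_2-\a_3)\tr\sum_{i=0}^{k-1}\E|Y^{\tr}(t_i)|^{p+\varrho}.
\end{align*}
This is not yet bounded uniformly in $k$ because of the $Lk\tr$ term, so the real work — and the step I expect to be the main obstacle — is to convert the strictly negative high-order term into a genuine contraction. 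The standard device is to note $\Phi(\phi)\leq 1+|\phi(0)|^2+\frac1\tau\int|\phi(\theta)|^2\mathrm d\theta\leq L(1+|\phi(0)|^{p+\varrho}+\frac1\tau\int|\phi(\theta)|^{p+\varrho}\mathrm d\theta)^{2/(p+\varrho)}$ (Young), hence $(\Phi(\phi))^{p/2}\leq L+L|\phi(0)|^{p+\varrho}+\frac{L}{\tau}\int|\phi(\theta)|^{p+\varrho}\mathrm d\theta$ — wait, that exponent only works when $p\le p+\varrho$, which holds; more carefully one gets that a small multiple of the $|Y^{\tr}(t_i)|^{p+\varrho}$ sum dominates a small multiple of the $(\Phi)^{p/2}$ sum. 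Reserving a sliver $\eta>0$ of the coefficient $\a_1-\a_2-\a_3$, I can therefore produce $-\eta\tr\sum_{i}\E(\Phi(\bar Y^{\tr}_{t_i}))^{p/2}$ plus $L k\tr$ on the right, yielding the discrete Gronwall-type inequality $u_k\leq u_0+Lk\tr-\eta\tr\sum_{i<k}u_i$ with $u_k:=\E(\Phi(\bar Y^{\tr}_{t_k}))^{p/2}$, from which $\sup_k u_k\leq L/\eta+u_0<\infty$ uniformly in $\tr\in(0,1]$. Handling the interpolation/endpoint bookkeeping in the delayed sum cleanly, and checking that the constants in \lemref{esti} and \lemref{+r} can be aligned so that the absorbed $|g|^2$ and $|(Y)^Tg|^2$ contributions do not erode the gap $\a_1-\a_2-\a_3$, is where the argument needs care; everything else is bookkeeping. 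Finally, $\E|Y^{\tr}(t_k)|^p\leq(1-\e_0)^{-p/2}\E(\Phi(\bar Y^{\tr}_{t_k}))^{p/2}\leq L$ gives \eqref{+o3.9}. \endproof
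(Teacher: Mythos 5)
Your argument coincides with the paper's own proof up to and including the unweighted summed inequality: starting from the one-step recursion \eqref{o3.13}, inserting the three bounds of \lemref{esti}, absorbing the $|(Y^{\tr}(t_k))^Tg(\bar Y^{\tr}_{t_k})|^2$ contribution into the $(p-1)|g|^2$ term via Cauchy--Schwarz, feeding the drift through \lemref{+r}, and telescoping the delayed $(p+\varrho)$-sums by convexity of the linear interpolant are exactly the paper's steps. The genuine gap is your final step. The implication you invoke --- that $u_k\le A+Bk-\eta'\sum_{i<k}u_i$ with $u_i\ge0$ forces $\sup_k u_k\le A+B/\eta'$ --- is false. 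That constraint only says $u_k+\eta'S_k\le A+Bk$ with $S_k=\sum_{i<k}u_i\ge0$, which controls the running sums $S_k$ (hence the Ces\`aro averages of $u$) linearly in $k$, but at any index where $S_k$ happens to be small it allows $u_k$ to be as large as $A+Bk$. Concretely, take $u_k=0$ except $u_{k_j}=j$ at sparse indices $k_j$ chosen so that $A+Bk_j-\eta'\sum_{l<j}l\ge j$; every constraint is satisfied, yet $\sup_ku_k=\infty$. Nor can you upgrade to a genuine one-step contraction $u_{k+1}\le(1-\eta\tr)u_k+L\tr$, because $u_k=\E(\Phi(\bar Y^{\tr}_{t_k}))^{\frac{p}{2}}$ carries the delayed integral over the whole window $[t_k-\tau,t_k]$, whereas the negative term supplied by \lemref{+r} at step $k$ involves only the single nodal value $|Y^{\tr}(t_k)|^{p+\varrho}$.

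The paper's device for precisely this difficulty is to multiply the one-step inequality by $e^{\sigma t_{i+1}}$ \emph{before} summing, with $\sigma>0$ fixed so that $e^{\sigma\tau}(\a_2+\a_3)\le\a_1$. The weighted delayed sums \eqref{o3.26}--\eqref{o3.26+1} are then dominated by $e^{\sigma\tau}\sum_{i}e^{\sigma t_{i+1}}|Y^{\tr}(t_i)|^{p+\varrho}$ plus an initial-data constant, so the negative nodal term still cancels them, while the troublesome remainder becomes $L\tr\sum_{i=0}^{k}e^{\sigma t_{i+1}}\le Le^{\sigma t_{k+2}}/\sigma$; dividing the resulting bound on $e^{\sigma t_{k+1}}\E(\Phi(\bar Y^{\tr}_{t_{k+1}}))^{\frac{p}{2}}$ by $e^{\sigma t_{k+1}}$ leaves a constant independent of $k$ and $\tr$. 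If you replace your unweighted summation by this exponentially weighted one, everything else in your write-up goes through.
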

\begin{proof}\textbf{Proof.}
For any  $\tr\in(0,1]$ and $i\in\mathbb{N}$,
putting (\ref{o3.22})--(\ref{o3.24}) into (\ref{o3.13}) leads to
\begin{align}\la{cnex}
&\E \Big(\big(\Phi(\bar Y_{t_{i+1}}^{\tr})\big)^{\frac{p}{2}}\big|\mathcal{F}_{t_{i}}\Big)\nn\\
\leq&(1+L\tr)\big(\Phi(\bar Y_{t_{i}}^{\tr})\big)^{\frac{p}{2}}+\frac{(1-\e_0)p}{2}\big(\Phi(\bar Y_{t_{i}}^{\tr})\big)^{\frac{p-4}{2}}
\Big(\Phi(\bar Y_{t_i}^{\tr})\big(2(Y^{\tr}(t_{i}))^Tf(\bar Y^{\tr}_{t_i})\nn\\
&+|g(\bar Y^{\tr}_{t_i})|^2\big)+(1-\e_0)(p-2)\big|(Y^{\tr}(t_{i}))^Tg(\bar Y^{\tr}_{t_i})\big|^2\Big)\tr\nn\\
\leq&(1+L\tr)\big(\Phi(\bar Y_{t_{i}}^{\tr})\big)^{\frac{p}{2}}+\frac{(1-\e_0)p}{2}
\big(\Phi(\bar Y_{t_i}^{\tr})\big)^{\frac{p-2}{2}}\big(2(Y^{\tr}(t_{i}))^Tf(\bar Y^{\tr}_{t_i})+(p-1)|g(\bar Y^{\tr}_{t_i})|^2\big)\tr.
\end{align}
 Due to $ \a_2+\a_3< \a_1$ in  Lemma \ref{+r}, we can choose a positive constant $\sigma$ such that
\begin{align*}
e^{\sigma\tau}(\a_2+\a_3)\leq \a_1.
\end{align*}
An application of Lagrange's
mean value theorem yields
$e^{\sigma t_{i+1}}-e^{\sigma t_{i}}\leq e^{\sigma t_{i+1}}\sigma\tr$.
This, along with  \eqref{cnex} implies that
\begin{align*}
&e^{\sigma t_{i+1}}\E \Big(\big(\Phi(\bar Y_{t_{i+1}}^{\tr})\big)^{\frac{p}{2}}\big|\mathcal{F}_{t_{i}}\Big)\nn\\
\leq&(e^{\sigma t_{i}}+e^{\sigma t_{i+1}}\sigma\tr)\big(\Phi(\bar Y_{t_{i}}^{\tr})\big)^{\frac{p}{2}}+e^{\sigma t_{i+1}}L\tr\big(\Phi(\bar Y_{t_{i}}^{\tr})\big)^{\frac{p}{2}}
+\frac{(1-\e_0)pe^{\sigma t_{i+1}}}{2}\big(\Phi(\bar Y_{t_i}^{\tr})\big)^{\frac{p-2}{2}}\nn\\
&\times\big(2(Y^{\tr}(t_{i}))^T f(\bar Y^{\tr}_{t_i})+(p-1)|g(\bar Y^{\tr}_{t_i})|^2\big)\tr\nn\\
\leq&e^{\sigma t_{i}}\big(\Phi(\bar Y_{t_{i}}^{\tr})\big)^{\frac{p}{2}} +e^{\sigma t_{i+1}}\tr\Big(L\big(\Phi(\bar Y_{t_i}^{\tr})\big)^{\frac{p}{2}}+\frac{(1-\e_0)p}{2}
\big(\Phi(\bar Y_{t_i}^{\tr})\big)^{\frac{p-2}{2}}\big(2(Y^{\tr}(t_{i}))^Tf(\bar Y^{\tr}_{t_i})\nn\\
&+(p-1)|g(\bar Y^{\tr}_{t_i})|^2\big)\Big).
\end{align*}
Using \eqref{re1+4} we arrive at
\begin{align*}
&e^{\sigma t_{i+1}}\E \Big(\big(\Phi(\bar Y_{t_{i+1}}^{\tr})\big)^{\frac{p}{2}}\big|\mathcal{F}_{t_{i}}\Big)\nn\\
\leq&e^{\sigma t_{i}}\big(\Phi(\bar Y_{t_{i}}^{\tr})\big)^{\frac{p}{2}} + e^{\sigma t_{i+1}}\tr\Big(L-\a_1| Y^{\tr}(t_i)|^{p+\varrho}+\frac{\a_2}{\tau}\int_{-\tau}^{0}|\bar Y_{t_{i}}^{\tr}(\theta)|^{p+\varrho}\mathrm d\theta\nn\\
&+\a_3\int_{-\tau}^{0}|\bar Y_{t_{i}}^{\tr}(\theta)|^{p+\varrho}\rho_1(\theta)\mathrm d\theta\Big).
\end{align*}
%
For any  $k\in\mathbb N$,
taking expectations in the above inequality
and summing  from $i=0$ to $k$ derives
\begin{align}\label{3.22o}
&e^{\sigma t_{k+1}}\E\big(\Phi(\bar Y_{t_{k+1}}^{\tr})\big)^{\frac{p}{2}}\nn\\
\leq&\big(\Phi(\bar Y_{0}^{\tr})\big)^{\frac{p}{2}}+L\tr\sum_{i=0}^{k}e^{\sigma t_{i+1}}
+\tr\sum_{i=0}^{k}e^{\sigma t_{i+1}}\E\Big(-\a_1| Y^{\tr}(t_i)|^{p+\varrho}\nn\\
&+\frac{\a_2}{\tau}\int_{-\tau}^{0}|\bar Y_{t_{i}}^{\tr}(\theta)|^{p+\varrho}\mathrm d\theta+\a_3\int_{-\tau}^{0}|\bar Y_{t_{i}}^{\tr}(\theta)|^{p+\varrho}\rho_1(\theta)\mathrm d\theta\Big).
\end{align}
It follows from \eqref{o3.4} and the convex property of $u(x)=x^{p+\varrho}$ that
\begin{align*}
&\int_{-\tau}^{0}|\bar Y^{\tr}_{t_i}(\theta)|^{ p+\varrho}\mathrm d\theta\nn\\
=&\sum_{j=-N}^{-1}\int_{j\tr}^{(j+1)\tr}\Big|\frac{(j+1)\tr-\theta}{\tr}Y^{\tr}(t_{i+j})
+\frac{\theta-j\tr}{\tr}Y^{\tr}(t_{i+j+1})\Big|^{p+\varrho}\mathrm d\theta\nn\\
\leq&\sum_{j=-N}^{-1}\int_{j\tr}^{(j+1)\tr}\Big(\frac{(j+1)\tr-\theta}{\tr}
|Y^{\tr}(t_{i+j})|^{p+\varrho}
+\frac{\theta-j\tr}{\tr}|Y^{\tr}(t_{i+j+1})|^{p+\varrho}\Big)\mathrm d\theta\nn\\
=&\frac{\tr}{2}\sum_{j=-N}^{-1}\big(|Y^{\tr}(t_{i+j})|^{p+\varrho}
+|Y^{\tr}(t_{i+j+1})|^{p+\varrho}\big).
\end{align*}
The fundamental theory of calculus  implies
\begin{align}\la{o3.26}
& \frac{1}{\tau}\sum_{i=0}^{k}e^{\sigma t_{i+1}}\int_{-\tau}^{0}|\bar Y^{\tr}_{t_i}(\theta)|^{ p+\varrho}\mathrm d\theta\nn\\
\leq&\frac{e^{\sigma\tau}\tr}{2\tau}\sum_{j=-N}^{-1}\sum_{i=0}^{k}\Big(e^{\sigma t_{i+j+1}}|Y^{\tr}(t_{i+j})
|^{p+\varrho}+e^{\sigma t_{i+j+2}}|Y^{\tr}(t_{i+j+1})|^{p+\varrho}\Big)\nn\\
\leq&\frac{e^{\sigma\tau}\tr}{ \tau}\sum_{j=-N}^{-1} \sum_{v=-N}^{k}e^{\sigma t_{v+1}}|Y^{\tr}(t_{v})
|^{p+\varrho} \nn\\
\leq&e^{\sigma\tau}N\sup_{-N\leq i\leq-1}|\xi(t_i)|^{p+\varrho}+e^{\sigma\tau}\sum_{i=0}^{k}e^{\sigma t_{i+1}}|Y^{\tr}(t_i)|^{p+\varrho}.
\end{align}
Similarly,
\begin{align}\la{o3.26+1}
&\sum_{i=0}^{k}e^{\sigma t_{i+1}}\int_{-\tau}^{0}|\bar Y^{\tr}_{t_i}(\theta)|^{ p+\varrho}\rho_1(\theta)\mathrm d\theta\nn\\
\leq&e^{\sigma\tau}\sum_{j=-N}^{-1}\int_{j\tr}^{(j+1)\tr}\frac{(j+1)\tr-\theta}{\tr}\rho_1(\theta)\mathrm d\theta\sum_{i=0}^{k}e^{\sigma t_{i+j+1}}|Y^{\tr}(t_{i+j})|^{p+\varrho}\nn\\
&+e^{\sigma\tau}\sum_{j=-N}^{-1}\int_{j\tr}^{(j+1)\tr}\frac{\theta-j\tr}{\tr}\rho_1(\theta)\mathrm d\theta \sum_{i=0}^{k}e^{\sigma t_{i+j+2}}|Y^{\tr}(t_{i+j+1})|^{p+\varrho}\nn\\
\leq&e^{\sigma\tau}\sum_{j=-N}^{-1}\int_{j\tr}^{(j+1)\tr}\frac{(j+1)\tr-\theta}{\tr}\rho_1(\theta)\mathrm d\theta\sum_{v=-N}^{k}e^{\sigma t_{v+1}}|Y^{\tr}(t_{v})|^{p+\varrho}\nn\\
&+e^{\sigma\tau}\sum_{j=-N}^{-1}\int_{j\tr}^{(j+1)\tr}\frac{\theta-j\tr}{\tr}\rho_1(\theta)\mathrm d\theta \sum_{v^{'}=-N}^{k}e^{\sigma t_{v^{'}+1}}|Y^{\tr}(t_{v^{'}})|^{p+\varrho}\nn\\
\leq&e^{\sigma\tau}N\sup_{-N\leq i\leq-1}|\xi(t_i)|^{p+\varrho}+e^{\sigma\tau}\sum_{i=0}^{k}e^{\sigma t_{i+1}}|Y^{\tr}(t_i)|^{p+\varrho}.
\end{align}
Inserting \eqref{o3.26} and \eqref{o3.26+1}  into \eqref{3.22o} we arrive at
\begin{align*}
&e^{\sigma t_{k+1}}\E\big(\Phi(\bar Y_{t_{k+1}}^{\tr})\big)^{\frac{p}{2}}\nn\\
\leq&\big(\Phi(\bar Y_{0}^{\tr})\big)^{\frac{p+\varrho}{2}}+L\tr\sum_{i=0}^{k}e^{\sigma t_{i+1}}+(\a_2+\a_3)e^{\sigma\tau}\tau\sup_{-N\leq i\leq-1}|\xi(t_i)|^{p+\varrho}\nn\\
&-(\a_1-e^{\sigma\tau}(\a_2+\a_3))
\tr\sum_{i=0}^{k}e^{\sigma t_{i+1}}\E|Y^{\tr}(t_{i})|^{p+\varrho}\nn\\
\leq& L+L\tr\frac{e^{\sigma t_{k+2}}-e^{\sigma\tr}}{e^{\sigma\tr}-1}\leq L+L\frac{e^{\sigma t_{k+2}}}{\sigma},
\end{align*}
which implies that  \eqref{+o3.9} holds.
The proof is therefore complete.
\end{proof}
\subsection{Convergence in $L^p$}\la{SR}
 In this subsection we will investigate  the  strong convergence of the truncated EM scheme \eqref{o3.3}.
For convenience, define an auxiliary process $Z^{\tr}(\cdot)$ by
\begin{align}\label{o3.27}
\left\{
\begin{array}{ll}
Z^{\tr}(t)=\!Y^{\tr}(t_k)+f(\bar Y^{\tr}_{t_k})(t-t_k )+
 g(\bar Y^{\tr}_{t_k})(B(t)-B(t_k) ), ~t\in [t_k, t_{k+1}),~k\in\mathbb N,&\\
Z^{\tr}(t)=\frac{t_{k+1}-t}{\tr}Y^{\tr}(t_k)+\frac{t-t_{k}}{\tr}Y^{\tr}(t_{k+1}),~ ~t\in [t_k, t_{k+1}],~k=-N,\cdots, -1.
\end{array}
\right.
\end{align}
Obviously,
\begin{align*}
\lim_{t\uparrow t_{k}}Z^{\tr}(t)=&\br Y^{\tr}(t_k),~~~k=1,2,\cdots,\nn\\
Z^{\tr}(t_k)=&Y^{\tr}(t_k),~~~k=-N,-N+1,\cdots.
\end{align*}
For any constant $m>\|\xi\|$
and  $\tr\in (0,1]$, define
\begin{align}\la{o3.29}
& \gamma_{m, \tr}=\inf\big\{t\geq0: |Z^{\tr}(t)|>m\big\}.
\end{align}
Choose a $\tr_{1}\in(0,1]$ sufficiently small such that
$$\Gamma^{-1}(K\tr_1^{-\lambda})\geq m,$$
which implies that for any $\tr\in(0,\tr_1]$, $Z^{\tr}(\cdot)$ is continuous on $[-\tau, \gamma_{m,\tr}]$.
Let $ \eta_{m,\tr}:=\lfloor\gamma_{m,\tr}/\tr\rfloor$.
For any $t\in[t_k,t_{k+1})$ with $k\in \mathbb N$, one has
$$\lfloor (t\wedge\gamma_{m,\tr})/\tr\rfloor=k\wedge \eta_{m,\tr}.$$
Define
\begin{align}\la{segment}
\bar Y^{\tr}_{t}=\sum_{k= 0}^{\infty}\bar Y^{\tr}_{t_k}\textbf 1_{[t_k, t_{k+1})}(t),~~~\forall ~ t\geq0.
\end{align}
Hence it follows from \eqref{o3.27}--\eqref{segment} that for any $\tr\in(0,\tr_1]$,
{
\begin{align}\la{o3.30}
Z^{\tr}(t\wedge\gamma_{m,\tr})
=&Y^{\tr}(t_{k\wedge\eta_{m,\tr}})+f(\bar Y^{\tr}_{t_{k\wedge\eta_{m,\tr}}})(t\wedge\gamma_{m,\tr}-t_{k\wedge\eta_{m,\tr}} )\nn\\
&+g(\bar Y^{\tr}_{t_{k\wedge\eta_{m,\tr}}})(B(t\wedge\gamma_{m,\tr})-B(t_{k\wedge\eta_{m,\tr}}) )\nn\\
=&Y^{\tr}(0)+\int_{0}^{t\wedge \gamma_{m,\tr}}f(\bar Y^{\tr}_{s})\mathrm ds
+\int_{0}^{t\wedge \gamma_{m,\tr}}g(\bar Y^{\tr}_{s})\mathrm dB(s).
\end{align}}

Now we start with estimating the probability  that  $Z^{\tr}(t)$ remains within the bounded set $\{x\in\RR^n: |x|\leq m \}$ for $t\in[-\tau, T]$.
\begin{lemma}\la{oth3}
Assume that  $(\textup{A}1)$ and $(\textup{A}2)$ hold.
Then for any  $\tr\in(0,\tr_1]$ and $T>0$,
\begin{align} \la{o3.31}
\mathbb{P}\{\gamma_{m,\tr}\leq T\}\leq\frac{L_2}{m^p},
\end{align}
where the constant $L_2$ depends on $T$ but is independent of $m$ and $\tr$.
  \end{lemma}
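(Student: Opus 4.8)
The plan is to reduce the estimate to a uniform $p$th moment bound for the stopped auxiliary process and then to close by Markov's inequality. For $\tr\in(0,\tr_1]$ the process $Z^{\tr}(\cdot)$ is continuous on $[-\tau,\gamma_{m,\tr}]$, so by the definition \eqref{o3.29} one has $|Z^{\tr}(\gamma_{m,\tr})|=m$ on $\{\gamma_{m,\tr}<\infty\}$; hence
\begin{align*}
m^p\,\PP\{\gamma_{m,\tr}\le T\}=\E\big(|Z^{\tr}(\gamma_{m,\tr})|^p\,\textbf{1}_{\{\gamma_{m,\tr}\le T\}}\big)\le\E\big|Z^{\tr}(T\wedge\gamma_{m,\tr})\big|^p ,
\end{align*}
and it suffices to prove $\E|Z^{\tr}(T\wedge\gamma_{m,\tr})|^p\le L_2$ for a constant $L_2$ depending only on $T$ and $\xi$ and independent of $m$ and of $\tr\in(0,\tr_1]$.

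To obtain this I would apply It\^o's formula to $|Z^{\tr}(\cdot\wedge\gamma_{m,\tr})|^p$ using the semimartingale representation \eqref{o3.30}; since $|Z^{\tr}(\cdot\wedge\gamma_{m,\tr})|\le m$ the stochastic integral stopped at $\gamma_{m,\tr}$ is a genuine martingale and vanishes in expectation, and bounding the second-order term via $|(Z^{\tr}(s))^Tg(\bar Y^{\tr}_s)|^2\le|Z^{\tr}(s)|^2|g(\bar Y^{\tr}_s)|^2$ gives
\begin{align*}
\E|Z^{\tr}(T\wedge\gamma_{m,\tr})|^p\le|\xi(0)|^p+\E\int_0^{T\wedge\gamma_{m,\tr}}\frac{p}{2}\,|Z^{\tr}(s)|^{p-2}\big(2(Z^{\tr}(s))^Tf(\bar Y^{\tr}_s)+(p-1)|g(\bar Y^{\tr}_s)|^2\big)\mathrm ds .
\end{align*}
On each subinterval $[t_k,t_{k+1})$ I would write $Z^{\tr}(s)=Y^{\tr}(t_k)+\Delta^{\tr}_s$ with $\Delta^{\tr}_s:=f(\bar Y^{\tr}_{t_k})(s-t_k)+g(\bar Y^{\tr}_{t_k})(B(s)-B(t_k))$, recalling from \eqref{segment} that $\bar Y^{\tr}_s=\bar Y^{\tr}_{t_k}$ and $\bar Y^{\tr}_{t_k}(0)=Y^{\tr}(t_k)$. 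The integrand then splits into a \emph{principal term} $\frac{p}{2}|Y^{\tr}(t_k)|^{p-2}\big(2(Y^{\tr}(t_k))^Tf(\bar Y^{\tr}_{t_k})+(p-1)|g(\bar Y^{\tr}_{t_k})|^2\big)$, which by $(\textup{A}2)$ and the computation leading to \eqref{o2.10+1} is at most $L$ minus a fixed positive multiple of $|Y^{\tr}(t_k)|^{p+\varrho}$ plus delay terms in $|\bar Y^{\tr}_{t_k}(\theta)|^{p+\varrho}$ whose total coefficient is strictly smaller (the inequality following \eqref{o2.10+1}), and a \emph{discretization-error term} collecting all contributions involving $\Delta^{\tr}_s$.

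For the error term the idea is to take conditional expectation given $\mathcal{F}_{t_k}$ first, so that the Brownian increments in $\Delta^{\tr}_s$ contribute only their known moments, and then to use the a priori bounds $\big(\Phi(\bar Y^{\tr}_{t_k})\big)^{-1}|f(\bar Y^{\tr}_{t_k})|^2\le L\tr^{-2\lambda}$ and $\big(\Phi(\bar Y^{\tr}_{t_k})\big)^{-1}|g(\bar Y^{\tr}_{t_k})|^2\le L\tr^{-\lambda}$ from \eqref{o3.6}--\eqref{o3.6+}, together with the uniform bound $\sup_{\tr}\sup_k\E\big(\Phi(\bar Y^{\tr}_{t_k})\big)^{p/2}\le L$ established inside the proof of \thmref{oth2}. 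Since $\lambda\in(0,1/2)$, after the extra $(s-t_k)$ factors, integration over an interval of length $\tr$ and summation over the at most $\lfloor T/\tr\rfloor+1$ steps, each piece of the error carries a strictly positive net power of $\tr$ and a power $\big(\Phi(\bar Y^{\tr}_{t_k})\big)^{q/2}$ with $q\le p$, so the whole error contributes at most $LT$. Finally the delay terms in the principal bound, namely $\int_{-\tau}^0|\bar Y^{\tr}_{t_k}(\theta)|^{p+\varrho}\rho_1(\theta)\mathrm d\theta$ and $\frac1\tau\int_{-\tau}^0|\bar Y^{\tr}_{t_k}(\theta)|^{p+\varrho}\mathrm d\theta$, are absorbed by the negative $|Y^{\tr}(t_k)|^{p+\varrho}$ term through the discrete Fubini/index-shift argument already carried out in \eqref{o3.26}--\eqref{o3.26+1}, at the cost of a fixed term $L\tau\|\xi\|^{p+\varrho}$ from the initial segment. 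Collecting constants gives $\E|Z^{\tr}(T\wedge\gamma_{m,\tr})|^p\le|\xi(0)|^p+LT+L\tau\|\xi\|^{p+\varrho}=:L_2$, uniform in $m$ and $\tr$.

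The step I expect to be the main obstacle is the estimation of the discretization-error term. For $2<p<4$ the factor $|Z^{\tr}(s)|^{p-2}=|Y^{\tr}(t_k)+\Delta^{\tr}_s|^{p-2}$ is not a polynomial, so one has to expand it about $Y^{\tr}(t_k)$ carefully -- using $\big|\,|x|^{p-2}x-|y|^{p-2}y\,\big|\le C(|x|\vee|y|)^{p-2}|x-y|$ and the sub/super-additivity of $t\mapsto t^{p-2}$ -- and then verify term by term that the summation over steps never leaves behind a negative power of $\tr$; the delicate point is that this must be achieved \emph{without} ever invoking the bound $|Z^{\tr}(s)|\le m$, since $L_2$ must not depend on $m$. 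A secondary technical issue is to make the Fubini absorption of the delay terms rigorous on the random interval $[0,T\wedge\gamma_{m,\tr}]$, which proceeds exactly as in the proof of \thmref{oth1}.
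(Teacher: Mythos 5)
Your reduction is the same as the paper's: both proofs end by bounding $\E|Z^{\tr}(T\wedge\gamma_{m,\tr})|^p$ uniformly in $m$ and $\tr$ and then applying Chebyshev's inequality. But the route you propose to the moment bound --- It\^o's formula applied to $|Z^{\tr}(\cdot\wedge\gamma_{m,\tr})|^p$ in continuous time, followed by an expansion of the weight $|Z^{\tr}(s)|^{p-2}$ about $|Y^{\tr}(t_k)|^{p-2}$ --- has a gap that I do not think can be closed under (A1)--(A2) alone, and it is not the one you flag. The paper instead stays entirely at the discrete level: it iterates the inequality $\Phi(\bar Y^{\tr}_{t_{(i+1)\wedge\eta}})\le\Phi(\bar Y^{\tr}_{t_{i\wedge\eta}})(1+\Theta_i\textbf 1_{\{\eta\ge i+1\}})$ with $\eta=\lfloor\gamma_{m,\tr}/\tr\rfloor$, expands $(1+\Theta_i)^{p/2}$ via the polynomial estimate of \cite[Lemma 3.3]{yang2018}, computes the conditional moments of the stopped increments $\tr B_i\textbf 1_{\{\eta\ge i+1\}}=B(t_{(i+1)\wedge\eta})-B(t_{i\wedge\eta})$ by Doob's optional stopping, invokes Lemma \ref{+r}, performs the index shift of \eqref{3.43o}--\eqref{3.43o+1}, and closes with the discrete Gronwall inequality; only in the final step does it pass from $Y^{\tr}(t_{k\wedge\eta})$ to $Z^{\tr}(t\wedge\gamma)$ by a one-step estimate using \eqref{o3.6}--\eqref{o3.6+}.

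The concrete obstruction to your version is the super-linear delay term $a_3\int_{-\tau}^{0}|\phi(\theta)|^{2+\varrho}\rho_1(\theta)\mathrm d\theta$ in (A2). The absorption mechanism of Lemma \ref{+r} and of \eqref{o3.26}--\eqref{o3.26+1} works because the weight multiplying the quantity $2(\phi(0))^Tf(\phi)+(p-1)|g(\phi)|^2$ is $\big(\Phi(\bar Y^{\tr}_{t_k})\big)^{(p-2)/2}$ (or, in your principal term, $|Y^{\tr}(t_k)|^{p-2}$) --- an $\mathcal F_{t_k}$-measurable function of the segment at the grid point --- so that after Young's inequality the positive delay contributions come out as $\frac{\a_2}{\tau}\int|\bar Y^{\tr}_{t_k}(\theta)|^{p+\varrho}\mathrm d\theta+\a_3\int|\bar Y^{\tr}_{t_k}(\theta)|^{p+\varrho}\rho_1(\theta)\mathrm d\theta$ with $\a_2+\a_3<\a_1$ and telescope against the $-\a_1|Y^{\tr}(t_i)|^{p+\varrho}$ terms at earlier grid points. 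In your scheme the weight is $|Z^{\tr}(s)|^{p-2}=|Y^{\tr}(t_k)+\Delta^{\tr}_s|^{p-2}$; the cross contribution $|\Delta^{\tr}_s|^{p-2}\cdot a_3\int|\bar Y^{\tr}_{t_k}(\theta)|^{2+\varrho}\rho_1(\theta)\mathrm d\theta$ survives in your ``discretization-error term,'' and any Young/H\"older splitting of it produces either $\E\big(\Phi(\bar Y^{\tr}_{t_k})\big)^{(p+\varrho)/2}$ or $\E\int|\bar Y^{\tr}_{t_k}(\theta)|^{p+\varrho}\rho_1(\theta)\mathrm d\theta$ --- moments of order $p+\varrho$ which are \emph{not} uniformly bounded by \eqref{o3.10**} or \thmref{oth2} (only order-$p$ moments are available), and which cannot be tamed by the extra positive power of $\tr$ because the issue is the moment order, not the step size; nor can they be index-shifted, since the weight $|Z^{\tr}(s)|^{p-2}$ no longer matches the negative term $-a_2|Z^{\tr}(s)|^{p-2}|Y^{\tr}(t_k)|^{2+\varrho}$ at shifted indices. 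Your stated worry (non-polynomiality of $|x|^{p-2}$ for $2<p<4$, and not using $|Z^{\tr}(s)|\le m$) is real but secondary; the missing idea is precisely the one the paper builds its proof around, namely keeping every weight evaluated at the grid segment so that Lemma \ref{+r} applies verbatim and the $(p+\varrho)$-order terms cancel rather than needing to be bounded.
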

\begin{proof}
\textbf{Proof.}
We use the similar techniques
in proofs of  Lemma \ref{esti} and Theorem \ref{oth2}.
Let $\tr\in(0,\tr_1]$ and  $T>0$. We simply write $\gamma=\gamma_{m,\tr},~\eta=\eta_{m,\tr}$.
For any integer $i\in\mathbb N$, if $\eta\geq i+1$,  it follows from \eqref{o3.3}  that
\begin{align*}
Y^{\tr}(t_{(i+1)\wedge \eta})
=&\br Y^{\tr}(t_{(i+1)\wedge \eta})=\br Y^{\tr}(t_{i+1})\nn\\
=&Y^{\tr}(t_{i})+f(\bar Y^{\tr}_{t_i})\tr+ g(\bar Y^{\tr}_{t_i})\tr B_i.
\end{align*}
If $ \eta< i+1$, it is obvious that $ \eta\leq i$. One has
\begin{align*}
&Y^{\tr}(t_{(i+1)\wedge \eta})=Y^{\tr}(t_{ \eta})
=Y^{\tr}(t_{i\wedge \eta}).
\end{align*}
The above two cases implies that for any $i\in\mathbb N$,
\begin{align*}
|Y^{\tr}(t_{(i+1)\wedge \eta})|^2
=&\big|Y^{\tr}(t_{i\wedge \eta})+\big(f(\bar Y^{\tr}_{t_i})\tr+ g(\bar Y^{\tr}_{t_i})\tr B_i\big)\textbf 1_{\{ \eta\geq i+1\}}\big|^2\nn\\
=&|Y^{\tr}(t_{i\wedge \eta})|^2+\varphi_{i}\textbf 1_{\{ \eta\geq i+1\}},
\end{align*}
where $\varphi_i$ is given by the equation below \eqref{o3.10}. In a similar way as \eqref{o3.11} was shown, we obtain
\begin{align*}
\frac{1}{\tau}\int_{-\tau}^{0}| \bar Y^{\tr}_{t_{(i+1)\wedge \eta}}(\theta)|^2\mathrm {d}\theta
=&\frac{1}{\tau}\int_{-\tau}^{0}| \bar Y^{\tr}_{t_{i\wedge \eta}}(\theta)|^2\mathrm {d}\theta
+\frac{1}{\tau}\int_{-\tr}^{0}| \bar Y^{\tr}_{t_{i+1}}(\theta)|^2\mathrm {d}\theta\textbf 1_{\{ \eta\geq i+1\}}\nn\\
&-\frac{1}{\tau}\int_{-\tau}^{-\tau+\tr}|\bar Y^{\tr}_{t_i}(\theta)|^2\mathrm {d}\theta\textbf 1_{\{ \eta\geq i+1\}}\nn\\
\leq &\frac{1}{\tau}\int_{-\tau}^{0}|\bar Y^{\tr}_{t_{i\wedge \eta}}(\theta)|^2\mathrm {d}\theta+\psi_{i}\textbf 1_{\{\eta\geq i+1\}},
\end{align*}
where $\psi_i$ is given by the equation below \eqref{o3.11}. Those implies
\begin{align*}
\Phi(\bar Y_{t_{(i+1)\wedge \eta}}^{\tr})
\leq&1+(1-\e_0)|\br Y^{\tr}(t_{(i+1)\wedge \eta})|^2+\frac{\e_0}{\tau}\int_{-\tau}^{0}| \bar Y^{\tr}_{t_{(i+1)\wedge \eta}}(\theta)|^2\mathrm {d}\theta\nn\\
\leq&\Phi(\bar Y_{t_{i\wedge \eta}}^{\tr})
+\big((1-\e_0)\varphi_{i}+\e_0\psi_{i}\big)\textbf 1_{\{ \eta\geq i+1\}}\nn\\
=&\Phi(\bar Y_{t_{i\wedge \eta}}^{\tr})(1+\Theta_{i}\textbf 1_{\{ \eta\geq i+1\}}),
\end{align*}
where $\Theta_i$ is given by the equation below \eqref{o3.12}. In a similar way as \eqref{o3.13} was shown, we can get
\begin{align}\la{o3.34}
&\E\Big(\big(\Phi(\bar Y^{\tr}_{t_{(i+1)\wedge \eta}})\big)^{\frac{p}{2}}
\big|\mathcal{F}_{t_{i\wedge \eta}}\Big)\nn\\
\leq &\big(\Phi(\bar Y^{\tr}_{t_{i\wedge \eta}})\big)^{\frac{p}{2}}
\Big(1+\frac{p}{2}\E\big(\Theta_{i}\textbf 1_{\{ \eta\geq i+1\}}|\mathcal{F}_{t_{i\wedge \eta}}\big)+\frac{p(p-2)}{8}\E\big(\Theta_{i}^2\textbf 1_{\{ \eta\geq i+1\}}|\mathcal{F}_{t_{i\wedge \eta}}\big)\nn\\
&+\E\big(\Theta_{i}^3P_{\ell}( \Theta_{i})\textbf 1_{\{ \eta\geq i+1\}}|\mathcal{F}_{t_{i\wedge \eta}}\big)\Big).
\end{align}
Note that $$\tr B_i\textbf 1_{\{ \eta\geq i+1\}}=B(t_{(i+1)\wedge \eta})-B(t_{i\wedge  \eta}).$$
An application of the  Doob martingale stopping
 time theorem \cite[p.11, Theorem 3.3]{Mao2007}  yields
\begin{align}\label{o3.35}
&\E\big(A \tr B_i\textbf 1_{\{ \eta\geq i+1\}}\big|
\mathcal{F}_{t_{i\wedge \eta}}\Big)=0,\nn\\
&\E\Big(\big|A\tr B_i\big|^{2}\textbf 1_{\{ \eta\geq i+1\}}\big|
\mathcal{F}_{t_{i\wedge \eta}}\Big)
=|A|^2\tr\textbf 1_{\{ \eta\geq i+1\}},~~~\forall A\in\RR^{n\times d}.
\end{align}
Similarly,
\begin{align}\la{o3.36}
&\E\Big((A \tr B_i)^{2j-1}\textbf 1_{\{ \eta\geq i+1\}}\big|
\mathcal{F}_{t_{i\wedge \eta}}\Big)=0,\nn\\
&\E\Big(\big|A\tr B_i\big|^{2j}\textbf 1_{\{ \eta\geq i+1\}}\big|
\mathcal{F}_{t_{i\wedge \eta}}\Big)
\leq L\tr^{j}\textbf 1_{\{ \eta\geq i+1\}},
\end{align}
where $A\in\mathbb{R}^{1\times d}$, $j\in\mathbb N$  with $j\geq 2$.
 Using the similar technique as in the proof of Lemma \ref{esti}, it follows from \eqref{o3.6}, \eqref{o3.6+}, \eqref{o3.35}, and \eqref{o3.36} that
\begin{align}\la{tilde3.22}
&\E\big(\Theta_{i}\textbf 1_{\{ \eta\geq i+1\}}\big|\mathcal{F}_{t_{i\wedge  \eta}}\big)\nn\\
\leq&\Big((1-\e_0)\big(\Phi(\bar Y_{t_i})\big)^{-1}\big(2(Y^{\tr}(t_{i}))^Tf(\bar Y^{\tr}_{t_i})+|g(\bar Y^{\tr}_{t_i})|^2\big)\tr+L\tr\Big)\textbf 1_{\{ \eta\geq i+1\}},
\end{align}
\begin{align}\la{tilde3.23}
&\E\big(\Theta_{i}^2\textbf 1_{\{ \eta\geq i+1\}}\big|\mathcal{F}_{t_{i\wedge  \eta}}\big)\nn\\
\leq&\Big(4(1-\e_0)^2\big(\Phi(\bar Y_{t_i})\big)^{-2}\big|(Y^{\tr}(t_{i}))^Tg(\bar Y^{\tr}_{t_i})\big|^2\tr+L\tr\Big)\textbf 1_{\{ \eta\geq i+1\}},
\end{align}
and
\begin{align}\la{tilde3.24}
\E\big(\Theta_{i}^3P_{\ell}( \Theta_{i})\textbf 1_{\{ \eta\geq i+1\}}\big|\mathcal{F}_{t_{i\wedge \eta}}\big)
\leq L\tr\textbf 1_{\{ \eta\geq i+1\}}.
\end{align}
Plugging \eqref{tilde3.22}--\eqref{tilde3.24} back into \eqref{o3.34} and using  \eqref{re1+4}, we derive
\begin{align}\la{tildeY}
&\E\Big(\big(\Phi(\bar Y^{\tr}_{t_{(i+1)\wedge \eta}})\big)^{\frac{p}{2}}
\big|\mathcal{F}_{t_{i\wedge \eta}}\Big)\nn\\
\leq &(1+L\tr)\big(\Phi(\bar Y^{\tr}_{t_{i\wedge \eta}})\big)^{\frac{p}{2}}
+\frac{(1-\e_0)p\tr}{2}
\textbf 1_{\{ \eta\geq i+1\}}
\big(\Phi(\bar Y_{t_i}^{\tr})\big)^{\frac{p-2}{2}}\big(2(Y^{\tr}(t_{i}))^T f(\bar Y^{\tr}_{t_i})\nn\\
&+(p-1)|g(\bar Y^{\tr}_{t_i})|^2\big)\nn\\
\leq&(1+L\tr)\big(\Phi(\bar Y^{\tr}_{t_{i\wedge \eta}})\big)^{\frac{p}{2}}
+\tr
\textbf 1_{\{ \eta\geq i+1\}}\Big(-\a_1
\big( Y^{\tr}(t_i)\big)^{p+\varrho}+\frac{\a_2}{\tau}\int_{-\tau}^{0}|\bar Y^{\tr}_{t_i}(\theta)|^{p+\varrho}\mathrm d\theta\nn\\
&+\a_3
\int_{-\tau}^{0}|\bar Y^{\tr}_{t_i}(\theta)|^{p+\varrho}\rho_1(\theta)\mathrm d\theta\big).
\end{align}
Taking expectations and summing both sides of \eqref{tildeY} from $i=0$ to $k$ we obtain
\begin{align}\la{3.42o}
&\E\big(\Phi(\bar Y_{t_{(k+1)\wedge \eta}}^{\tr})\big)^{\frac{p}{2}}
\leq \big(\Phi(\bar Y_{0}^{\tr}\big)^{\frac{p}{2}}
+L\tr\sum_{i=0}^{k}\E\big(\Phi(\bar Y_{t_{i\wedge \eta}}^{\tr})\big)^{\frac{p}{2}}\nn\\
&+\tr\sum_{i=0}^{k}
\E\Big(\textbf 1_{\{ \eta\geq i+1\}}\big(-\a_1
\big(Y^{\tr}(t_i)\big)^{p+\varrho}+\frac{\a_2}{\tau}\int_{-\tau}^{0}|\bar Y^{\tr}_{t_i}(\theta)|^{p+\varrho}\mathrm d\theta\nn\\
&+\a_3
\int_{-\tau}^{0}|\bar Y^{\tr}_{t_i}(\theta)|^{p+\varrho}\rho_1(\theta)\mathrm d\theta\big)\Big).
\end{align}
One notices that
$
 \textbf 1_{\{ \eta\geq i+1\}}
\leq \textbf 1_{\{ \eta\geq i+j+1\}}
$ for any $j\in\{-N,-N+1,\cdots, 0\}$.
In the similar ways as \eqref{o3.26} and \eqref{o3.26+1} were shown, we derive
\begin{align}\la{3.43o}
&\frac{1}{\tau}\sum_{i=0}^{k}\int_{-\tau}^{0}|\bar Y^{\tr}_{t_i}(\theta)|^{p+\varrho}\mathrm d\theta\textbf 1_{\{ \eta\geq i+1\}}\nn\\
\leq&\frac{\tr}{2\tau}\sum_{j=-N}^{-1}\sum_{i=0}^{k}\Big(|Y^{\tr}(t_{i+j})
|^{p+\varrho}\textbf 1_{\{ \eta\geq i+1\}}+|Y^{\tr}(t_{i+j+1})|^{p+\varrho}\textbf 1_{\{ \eta\geq i+1\}}\Big)\nn\\
\leq&\frac{\tr}{2\tau}\sum_{j=-N}^{-1}\sum_{i=0}^{k}\Big(|Y^{\tr}(t_{i+j})
|^{p+\varrho}\textbf 1_{\{ \eta\geq i+j+1\}}+|Y^{\tr}(t_{i+j+1})|^{p+\varrho}\textbf 1_{\{ \eta\geq i+j+2\}}\Big)\nn\\
\leq&N\sup_{-N\leq i\leq-1}|\xi(t_{i})|^{p+\varrho}+\sum_{i=0}^{k}|Y^{\tr}(t_{i})|^{p+\varrho}
\textbf 1_{\{ \eta\geq i+1\}},
\end{align}
and
\begin{align}\la{3.43o+1}
&\sum_{i=0}^{k}\int_{-\tau}^{0}|\bar Y^{\tr}_{t_i}(\theta)|^{p+\varrho}\rho_1(\theta)\mathrm d\theta\textbf 1_{\{ \eta\geq i+1\}}\nn\\
\leq &N\sup_{-N\leq i\leq-1}|\xi(t_{i})|^{p+\varrho}+\sum_{i=0}^{k}|Y^{\tr}(t_{i})|^{p+\varrho}
\textbf 1_{\{ \eta\geq i+1\}}.
\end{align}
Inserting \eqref{3.43o} and \eqref{3.43o+1}  into \eqref{3.42o} and using $\a_2+\a_3<\a_1$ leads to
\begin{align*}
\E\big(\Phi(\bar Y_{t_{(k+1)\wedge \eta}}^{\tr})\big)^{\frac{p}{2}}
\leq& L+L\tr\sum_{i=0}^{k}\E\big(\Phi(\bar Y_{t_{i\wedge \eta}}^{\tr})\big)^{\frac{p}{2}}.
\end{align*}
Applying the discrete Gronwall inequality \cite[p.56, Theorem 2.5]{mao-yuan} yields  that for any $T>0$,
\begin{align}\la{o3.37}
\sup_{\tr\in(0,\tr_1]}\sup_{0\leq k\tr\leq T}\E\big(\Phi(\bar Y^{\tr}_{t_{k\wedge \eta}})\big)^{\frac{p}{2}}\leq \tilde L,
\end{align}
where the constant $\tilde L$ depends on $T$ but is independent of $m$ and $\tr$.
Furthermore, for any $t\in[0,T]$ there exists a  $k\in\mathbb N$  such that $t\in[t_k,t_{k+1})$. Using the first equality of \eqref{o3.30} and   the Burkholer-Davis-Gundy inequality
 yields
\begin{align*}
&\E\big|Z^{\tr}(t\wedge\gamma)\big|^p\nn\\
\leq& 3^{p-1}\E\big|Y^{\tr}(t_{k\wedge  \eta})\big|^p+3^{p-1}\E\big|f(\bar Y^{\tr}_{t_{k\wedge \eta}})\big|^p\tr^p+3^{p-1}(\frac{p^{p+1}}{2(p-1)^{p-1}})^{\frac{p}{2}}\E\big|g(\bar Y^{\tr}_{t_{k\wedge  \eta}})\big|^p\tr^{\frac{p}{2}}.
\end{align*}
Recalling the definition of $\Phi$ and making use of  \eqref{o3.6}, \eqref{o3.6+}, and \eqref{o3.37}  leads to
\begin{align*}
&\E\big|Z^{\tr}(t\wedge\gamma)\big|^p\nn\\
\leq& L\E\big|Y^{\tr}(t_{k\wedge  \eta})\big|^p+L\E\big(\Phi(\bar Y^{\tr}_{t_{k\wedge \eta}})\big)^{\frac{p}{2}}\tr^{p(1-\lambda)}+L\E\big(\Phi(\bar Y^{\tr}_{t_{k\wedge \eta}})\big)^{\frac{p}{2}}\tr^{\frac{p(1-\lambda)}{2}}\nn\\
\leq&L\E\big|Y^{\tr}(t_{k\wedge  \eta})\big|^p+L\E\big(\Phi(\bar Y^{\tr}_{t_{k\wedge \eta}})\big)^{\frac{p}{2}}
\leq L\sup_{0\leq k\tr\leq T}\E\big(\Phi(\bar Y^{\tr}_{t_{k\wedge \eta}})\big)^{\frac{p}{2}}\leq L_2,
\end{align*}
which implies
\begin{align*}
    \PP \{\gamma\leq T\}
    \leq&\frac{\E\big|Z^{\tr}(T\wedge\gamma)\big|^p}{m^p }\leq  \frac{L_2}{m^p }.
\end{align*}
The proof is therefore complete.
\end{proof}

Let
\begin{align*}
\vartheta_{m,\tr}:=\delta_m \wedge \gamma_{m,\tr},
\end{align*}
where $\delta_m$ and $\gamma_{m,\tr}$ are given by (\ref{o2.8}) and (\ref{o3.29}), respectively.
For any $t\geq-\tau$, define
\begin{align}\label{o3.8}
Y^{\tr}(t)=&\sum_{k=-N}^{\infty }Y^{\tr}(t_k)\textbf {1}_{[t_k,t_{k+1})}(t).
\end{align}
\begin{lemma}\label{oth4}
Assume that $(\textup{A}1)$ and $(\textup{A}2)$ hold.
Then for any $m>\|\xi\|$,  $\tilde p>0$,  and $T>0$,
\begin{align*}
\lim_{\tr\rightarrow 0^{+}}\E\lf(|x(T)-Y^{\tr}(T)|^{\tilde p} \textup{\textbf 1}_{\{\vartheta_{m,\tr}> T\}}\rt)=0.
\end{align*}
\end{lemma}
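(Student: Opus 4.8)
The plan is to show that, on the event $\{\vartheta_{m,\tr}>T\}$, the exact solution $x(\cdot)$ and the auxiliary process $Z^{\tr}(\cdot)$ stay within the ball of radius $m$ on $[0,T]$, so that only the \emph{local} Lipschitz constant $R_m$ (equivalently $\Gamma(m)$) enters the error analysis, and then to run a standard Gronwall argument. First I would introduce the stopped error $e^{\tr}(t):=x(t\wedge\vartheta_{m,\tr})-Z^{\tr}(t\wedge\vartheta_{m,\tr})$ and write, using \eqref{o1.1} and the integral representation \eqref{o3.30},
\begin{align*}
e^{\tr}(t)=\int_{0}^{t\wedge\vartheta_{m,\tr}}\big(f(x_s)-f(\bar Y^{\tr}_s)\big)\mathrm ds
+\int_{0}^{t\wedge\vartheta_{m,\tr}}\big(g(x_s)-g(\bar Y^{\tr}_s)\big)\mathrm dB(s).
\end{align*}
Applying It\^o's formula to $|e^{\tr}(t)|^2$ (or directly the Burkholder--Davis--Gundy inequality after taking suprema), together with (A1) with $\ell=m$, bounds $\E\sup_{0\le s\le t}|e^{\tr}(s)|^2$ by $L R_m^2\int_0^t \E\big(\Psi(x_s,\bar Y^{\tr}_s)\big)\,\mathrm ds$ up to stopping, where $\Psi$ is the modulus appearing in \eqref{o3.1}.

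The next step is to control $\Psi(x_s,\bar Y^{\tr}_s)$ by $\sup_{0\le r\le s}|e^{\tr}(r)|^2$ plus two genuinely small remainder terms: (i) the difference $|Z^{\tr}(s)-Y^{\tr}(t_k)|$ on each subinterval $[t_k,t_{k+1})$, which by \eqref{o3.27} and the bounds \eqref{o3.6}--\eqref{o3.6+} on the stopped event is $O(\tr^{1/2-\lambda})$ in $L^2$ via the BDG inequality; and (ii) the segment difference $\|x_s-\bar Y^{\tr}_s\|$, which one splits as $\|x_s-Z^{\tr}_s\|+\|Z^{\tr}_s-\bar Y^{\tr}_s\|$, the first piece being handled by $e^{\tr}$ together with the continuity modulus of $x$ over a mesh interval, and the second by the linear-interpolation estimate \eqref{o3.4} (the interpolation error between $\bar Y^{\tr}$ and the piecewise-linear $Z^{\tr}$ restricted to the delay window is again $O(\tr^{1/2-\lambda})$ on the stopped event). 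Here the uniform moment bounds \eqref{o2.7} and \eqref{+o3.9}, and the continuity of Brownian paths, guarantee that these remainders tend to $0$ in $L^{2}$ (indeed in any $L^{q}$) as $\tr\to0^+$, uniformly over the mesh. Feeding this back yields
\[
\E\sup_{0\le s\le t}|e^{\tr}(s)|^2\le C(m)\int_0^t \E\sup_{0\le r\le s}|e^{\tr}(r)|^2\,\mathrm ds + \beta(\tr),
\]
with $\beta(\tr)\to0$, and Gronwall gives $\E\sup_{0\le t\le T}|e^{\tr}(t)|^2\to0$.

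Finally I would pass from $Z^{\tr}$ to the piecewise-constant $Y^{\tr}$ of \eqref{o3.8}: on $[t_k,t_{k+1})$ one has $Z^{\tr}(t)-Y^{\tr}(t)=f(\bar Y^{\tr}_{t_k})(t-t_k)+g(\bar Y^{\tr}_{t_k})(B(t)-B(t_k))$, whose $L^2$-norm on the stopped event is again $O(\tr^{1/2-\lambda})$ by \eqref{o3.6}--\eqref{o3.6+}, so $\E\big(|x(T)-Y^{\tr}(T)|^2\,\textbf 1_{\{\vartheta_{m,\tr}>T\}}\big)\to0$. To obtain the stated conclusion for an arbitrary exponent $\tilde p>0$ rather than $\tilde p=2$: for $\tilde p\le2$ it follows from the $L^2$ result by H\"older's inequality; for $\tilde p>2$ one uses the uniform $p$th-moment bounds \eqref{o2.7} and \eqref{+o3.9} with a higher moment to interpolate (or, more simply, reruns the same $L^{\tilde p}$ computation since (A1), BDG, and the truncation bounds all hold with general exponents), noting that the relevant moments are finite by Theorems \ref{oth1} and \ref{oth2}. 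The main obstacle is the bookkeeping in step two: correctly decomposing $\Psi(x_s,\bar Y^{\tr}_s)$ so that the ``bad'' super-linear terms are always evaluated at arguments confined to $\{|\cdot|\le m\}$ (hence controlled by $\Gamma(m)$, a fixed constant once $m$ is frozen) while every leftover term is provably $o(1)$ as $\tr\to0$; once the stopping time $\vartheta_{m,\tr}$ is used to localise, the remaining estimates are routine.
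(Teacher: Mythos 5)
Your proposal is correct in outline, but it takes a genuinely different route from the paper. The paper does not run a Gronwall estimate on the stopped error at all: it introduces the radially truncated coefficients $f_{m}(\phi)=f\big((\|\phi\|\wedge m)\phi/\|\phi\|\big)$ and $g_m$ likewise, which by (A1) are globally Lipschitz and of linear growth, considers the auxiliary SFDE \eqref{linear} driven by these coefficients, and then simply invokes the classical convergence theorem for the ordinary EM scheme applied to globally Lipschitz SFDEs (\cite[Theorem 2.5]{mao2003}), supplemented by the elementary estimate $\E\sup_{0\le t\le T}|z^{\tr}(t)-y^{\tr}(t)|^{\tilde p}\to 0$ and the pathwise identifications \eqref{o3.45}--\eqref{o3.46}, which hold because $m>\|\xi\|$ and, for $\tr\le\tr_1$, the truncation mapping $\Lambda^{\tr,\lambda}_{\Gamma}$ is inactive before $\vartheta_{m,\tr}$. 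Your direct argument essentially re-proves the cited theorem in the localized setting: it is self-contained, yields control of $\E\sup_{0\le t\le T}|e^{\tr}(t)|^2$ rather than only the terminal value, and would produce a rate if the initial datum had a H\"older modulus; the price is the heavier bookkeeping in decomposing $\Psi(x_s,\bar Y^{\tr}_s)$ (time shift from $s$ to $t_k$, past error, interpolation error of both $\xi$ and the scheme), all of which is delegated to \cite{mao2003} in the paper's version. Two points in your sketch deserve care. First, without (A3) the initial-segment interpolation error is controlled only by the uniform continuity of $\xi$ on $[-\tau,0]$, so your remainder $\beta(\tr)$ tends to $0$ without a rate --- which suffices here, but you should not claim $O(\tr^{1/2-\lambda})$ for that piece. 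Second, your first suggestion for $\tilde p>2$ (interpolating from the $p$th-moment bounds \eqref{o2.7} and \eqref{+o3.9}) does not work as stated, since those bounds are available only for the specific exponent $p$ of (A2); your alternative suggestion is the right one: on $\{t\le\vartheta_{m,\tr}\}$ every argument of $f$ and $g$ lies in the ball of radius $m$, so by (A1) the coefficients are bounded by $m$-dependent constants and the whole $L^{\tilde p}$ computation goes through for arbitrary $\tilde p$, exactly as the linear growth bound \eqref{o3.40} makes all moments available in the paper's reduction.
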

\begin{proof}\textbf{Proof.}
Define
\begin{align*}
f_{m}(\phi)=f\Big(\lf(\|\phi\|\wedge m \rt) \frac{\phi}{\|\phi\|}\Big),
~~~~g_{m}(\phi)=g\Big(\lf(\|\phi\|\wedge {m} \rt) \frac{\phi}{\|\phi\|}\Big).
\end{align*}
By virtue of (A1) we derive that for any $\phi,~\bar \phi\in \mathcal C$,
  \begin{align*}
  |f_{m}(\phi)-f_{m}(\bar \phi)|\vee  |g_{m}(\phi)-g_{m}(\bar \phi)|\leq R_{m}\Big(|\phi(0)-\bar\phi(0)|+\frac{1}{\tau}\int_{-\tau}^{0}|\phi(\theta)-\bar\phi(\theta)|\mathrm d\theta\Big),
  \end{align*}
which implies
 \begin{align}\label{o3.40}
 |f_{m}(\phi)|\vee  |g_{m}(\phi)|\leq&( 2R_{m}\vee |f(\textbf{0})|\vee|g(\textbf{0})|)(1+\|\phi\|).
 \end{align}
Consider the following  SFDE
\be\la{linear}
d\tilde x(t) =f_{m}(\tilde x_t)dt +g_{m}(\tilde x_t)dB(t)
\ee
with  the initial data $\xi \in \mathcal C$.
  For each $\tr\in(0,\tr_1]$,
we make use of the EM scheme to approximate SFDE \eqref{linear} and denote by
$\{y^{\tr}(k\tr)\}_{k\geq -N}$ the discrete EM solution (see \cite[(2.3)]{mao2003}).
Define the piecewise constant EM process
$\{y^{\tr}(t)\}_{ t\geq -\tau}$ by
$$y^{\tr}(t)=\sum_{k=-N}^{\infty}y^{\tr}(k\tr)\textbf 1_{[k\tr,(k+1)\tr)}(t),$$
and denote by $ \{z^{\tr}(t)\}_{ t\geq-\tau}$ the corresponding continuous EM solution (see \cite[(2.7)]{mao2003}).
Let $T>0$.
In a similar way as \cite[Theorem 2.5]{mao2003} were shown, we derive
that for any $\tilde p>0$,
\begin{align}\label{o3.42}
&\lim_{\tr\rightarrow 0^+}\E\Big( \sup_{0\leq t\leq T} |\tilde x(t)-z^{\tr}(t)|^{\tilde p}\Big)
=0.
\end{align}
For any $k\in\mathbb N$, denote by $\bar y_{k\tr}^{\tr}(\cdot)$ the $\mathcal C$-valued  linear interpolation of $y^{\tr}((k-N)\tr)$, $\cdots$, $y^{\tr}(k\tr)$ (see \cite[(2.4)]{mao2003}).
Applying \cite[Lemma 3.2]{mao2003} yields
\begin{align}\la{boundlinear}
\E\big(\sup_{0\leq k\tr\leq T} \|\bar y^{\tr}_{k\tr}\|^{\tilde p}\big)
\leq&\|\xi\|^{\tilde p} +\E\big(\sup_{0\leq t\leq T} |z^{\tr}(t)|^{\tilde p}\big)\leq \check{L},
\end{align}
where the positive constant $\check L$ depends on $T$ but is independent of $\tr$.
It is straightforward to see that for any $t\in[t_k,t_{k+1})$ with $k\in\mathbb N$,
\begin{align*}
 |z^{\tr}(t)- y^{\tr}(t)|^{\tilde p}
 =&|f(\bar y^{\tr}_{k\tr})(t-t_k)+g( \bar y^{\tr}_{k\tr})(B(t)-B(t_k))|^{\tilde p}\nn\\
\leq&2^{\tilde p}|f(\bar y^{\tr}_{k\tr})|^{\tilde p}\tr^{\tilde p}+2^{\tilde p}|g(\bar y^{\tr}_{k\tr})|^{\tilde p}|B(t)-B(t_k)|^{\tilde p}.
\end{align*}
According to \eqref{o3.40} yields
\begin{align*}
 |z^{\tr}(t)-y^{\tr}(t)|^{\tilde p}
\leq&L\big(1+\|\bar y^{\tr}_{k\tr}\|^{\tilde p}\big)\big(\tr^{\tilde p}+|B(t)-B(t_k)|^{\tilde p}\big).
\end{align*}
For each  $\tilde p >1$,  applying the H\"{o}lder inequality, the Burkholder-Davis-Gundy inequality, and using \eqref{boundlinear}, we get
\begin{align*}
&\lim_{\tr\rightarrow 0^+}\E\Big(\sup_{0\leq t\leq T} |z^{\tr}(t)- y^{\tr}(t)|^{\tilde p}\Big)\nn\\
\leq&L\lim_{\tr\rightarrow 0^+}\E\Big(\sup_{0\leq k\tr\leq T}\sup_{k\tr\leq t<(k+1)\tr}\big(1+\|\bar y^{\tr}_{k\tr}\|^{\tilde p}\big)\big(\tr^{\tilde p}+|B(t)-B(t_k)|^{\tilde p}\big)\Big)\nn\\
\leq&L\lim_{\tr\rightarrow 0^+}\Big(1+\E\big(\sup_{0\leq k\tr\leq T}\| \bar y^{\tr}_{k\tr}\|^{2\tilde p}\big)\Big)^{\frac{1}{2}}\nn\\
&\times\Big(\tr^{2\tilde p}+\E\big(\sup_{0\leq k\tr\leq T}\sup_{k\tr\leq t<(k+1)\tr}|B(t)-B(t_k)|^{2\tilde p}\big)\Big)^{\frac{1}{2}}\nn\\
\leq&L\lim_{\tr\rightarrow 0^+}\Big(\tr^{2\tilde p}+\sum_{k=0}^{\lfloor T/\tr\rfloor}\E\big(\sup_{k\tr\leq t<(k+1)\tr}|B(t)-B(t_k)|^{2\tilde p}\big)\Big)^{\frac{1}{2}}\nn\\
\leq&L\lim_{\tr\rightarrow 0^+}\Big(\tr^{2\tilde p}+(\lfloor T/\tr\rfloor+1)\tr^{\tilde p}\Big)^{\frac{1}{2}}
=0.
\end{align*}
This, along with the Lyapunov inequality implies that for any  $\tilde p>0$,
\begin{align}\label{o3.43}
 \lim_{\tr\rightarrow 0^+}\E\Big( \sup_{0\leq t\leq T} |z^{\tr}(t)-y^{\tr}(t)|^{\tilde p}\Big)=0.
 \end{align}
Utilizing \eqref{o3.42} and \eqref{o3.43} we have that for any  $\tilde p>0$,
\begin{align}\label{o3.44}
\lim_{\tr\rightarrow 0^+}\E\Big(\sup_{0\leq t\leq T} |\tilde x(t)-y^{\tr}(t)|^{\tilde p}\Big)=0.
\end{align}
According to $m>\|\xi\|$ yields
\be\la{o3.45}
x(t\wedge\vartheta_{m,\tr})=\tilde x(t\wedge\vartheta_{m,\tr}),~\forall~ t\geq -\tau, ~~~\hbox{a.s.},
\ee
and
\begin{align}\la{o3.46}
Y^{\tr}(t\wedge \vartheta_{m,\tr})=y^{\tr}(t\wedge \vartheta_{m,\tr}),
~~\forall ~t~\geq-\tau,~\tr\in(0,\tr_1],~~~\hbox{a.s.}
\end{align}
By virtue of  (\ref{o3.44})--(\ref{o3.46}) we derive that for any $\tilde p>0$,
\begin{align*}
&\lim_{\tr\rightarrow 0^+}\E\lf(|x(T)-Y^{\tr}(T)|^{\tilde p} \textbf 1_{\{\vartheta_{m,\tr}> T\}}\rt)\nn\\
 =&\lim_{\tr\rightarrow 0^+}\E\lf(|\tilde x(T\wedge\vartheta_{m,\tr})-y^{\tr}(T\wedge\vartheta_{m,\tr})|^{\tilde p} \textbf 1_{\{\vartheta_{m,\tr}> T\}}\rt)\nn\\
\leq &\lim_{\tr\rightarrow 0^+}\E\left(|\tilde x(T\wedge\vartheta_{m,\tr})-y^{\tr}(T\wedge\vartheta_{m,\tr})|^{\tilde p}\right)
\leq\lim_{\tr\rightarrow 0^+}\E\left(\sup_{0\leq t\leq T}|\tilde x(t)-y^{\tr}(t)|^{\tilde p}\right)=0.
\end{align*}
 The proof is complete.
\end{proof}
\begin{theorem}\la{th4}
Assume that  $(\textup{A}1)$ and $(\textup{A}2)$ hold.  Then for any $T>0$ and  $q\in (0, p)$,
$$\lim_{\tr\rightarrow 0^+} \E |x(T)-Y^{\tr}(T)|^q=0.$$
\end{theorem}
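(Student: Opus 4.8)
The plan is to combine the stopped-process convergence of \lemref{oth4} with the tail probability bounds from \thmref{oth1} and \lemref{oth3}, using the uniform moment bounds \eqref{o2.7} and \eqref{+o3.9} to control the contribution on the exceptional event. Fix $T>0$ and $q\in(0,p)$. First I would split
\begin{align*}
\E|x(T)-Y^{\tr}(T)|^q
=\E\big(|x(T)-Y^{\tr}(T)|^q\textbf 1_{\{\vartheta_{m,\tr}>T\}}\big)
+\E\big(|x(T)-Y^{\tr}(T)|^q\textbf 1_{\{\vartheta_{m,\tr}\leq T\}}\big),
\end{align*}
where $\vartheta_{m,\tr}=\delta_m\wedge\gamma_{m,\tr}$. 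For the second term, apply Young's inequality in the form $a^q b\leq \frac{q}{\bar q}\e^{\bar q/q}a^{\bar q}+\frac{\bar q-q}{\bar q}\e^{-q/(\bar q-q)}b$ with $\bar q\in(q,p)$, together with $\textbf 1_{\{\vartheta_{m,\tr}\leq T\}}=\textbf 1_{\{\vartheta_{m,\tr}\leq T\}}^{(\bar q-q)/\bar q}$, to bound it by $L\,\e\,\E\big(|x(T)|^{\bar q}+|Y^{\tr}(T)|^{\bar q}\big)+L\,\e^{-q/(\bar q-q)}\PP\{\vartheta_{m,\tr}\leq T\}$. By \eqref{o2.7}, \eqref{+o3.9} and the Lyapunov inequality the moments are bounded by a constant independent of $m$ and $\tr$, so this term is at most $L\e+L\e^{-q/(\bar q-q)}\PP\{\vartheta_{m,\tr}\leq T\}$.

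Next I would estimate $\PP\{\vartheta_{m,\tr}\leq T\}\leq \PP\{\delta_m\leq T\}+\PP\{\gamma_{m,\tr}\leq T\}\leq (L_1+L_2)/m^p$ by \eqref{o2.9} and \eqref{o3.31}, the key point being that $L_1,L_2$ depend only on $T$. Now choose the parameters in order: given $\epsilon_*>0$, first pick $\e$ small so that $L\e<\epsilon_*/3$; then pick $m>\|\xi\|$ large (depending on $\e$ and $\epsilon_*$) so that $L\,\e^{-q/(\bar q-q)}(L_1+L_2)/m^p<\epsilon_*/3$; with $m$ now fixed, \lemref{oth4} (applied with $\tilde p=q$) gives $\limsup_{\tr\to0^+}\E\big(|x(T)-Y^{\tr}(T)|^q\textbf 1_{\{\vartheta_{m,\tr}>T\}}\big)=0$, so the first term is below $\epsilon_*/3$ for all sufficiently small $\tr$. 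Combining, $\limsup_{\tr\to0^+}\E|x(T)-Y^{\tr}(T)|^q\leq\epsilon_*$, and since $\epsilon_*$ is arbitrary the limit is $0$.

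The main obstacle is making the argument uniform in $\tr$: the bound on $\PP\{\gamma_{m,\tr}\leq T\}$ must be genuinely independent of $\tr$ (this is exactly what \lemref{oth3} provides, via the uniform moment estimate \eqref{o3.37} for the stopped process), and one must be careful that $m$ is frozen \emph{before} sending $\tr\to0^+$ so that \lemref{oth4} can be invoked for that fixed $m$. A minor technical point is that \lemref{oth4} is stated for the piecewise-constant interpolation $Y^{\tr}(T)$ of \eqref{o3.8}, which agrees with $\br Y^{\tr}(T)$ when $T$ is a grid point and otherwise differs from the value used in the scheme only through a step that vanishes in $L^q$ as $\tr\to0^+$; this is already subsumed in the statement of \lemref{oth4}, so no extra work is needed beyond citing it. Everything else is routine application of Young's and Lyapunov's inequalities.
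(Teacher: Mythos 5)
Your proposal is correct and follows essentially the same route as the paper's proof: the same decomposition over $\{\vartheta_{m,\tr}\leq T\}$ and its complement, the same Young-inequality bound controlled by the uniform moment estimates \eqref{o2.7} and \eqref{+o3.9}, the same tail bounds \eqref{o2.9} and \eqref{o3.31}, and the same order of quantifiers ($\e$ first, then $m$, then $\tr\to0^+$) before invoking Lemma~\ref{oth4}. The only cosmetic difference is that you apply Young's inequality with an intermediate exponent $\bar q\in(q,p)$ where the paper uses $p$ directly; this changes nothing of substance.
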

\begin{proof}\textbf{Proof.}
Let $m>\|\xi\|$,   ~$\tr\in(0,\tr_1]$.  For any $T>0$, $q\in(0,p)$, and $\epsilon_1>0$,
applying the Young inequality yields
\begin{align}\la{o3.47}
 \E|x(T)-Y^{\tr}(T)|^q
 =&\E\big(|x(T)-Y^{\tr}(T)|^q \textbf 1_{\{\vartheta_{m,\tr}\leq T\}}\big)+\E\big(|x(T)-Y^{\tr}(T)|^q \textbf 1_{\{\vartheta_{m,\tr}> T\}}\big)\nn\\
\leq& \frac{q \epsilon_1}{p}\E|x(T)-Y^{\tr}(T)|^{p}+\frac{p-q}{p \epsilon_1^{q/(p-q)}}\PP \big(\vartheta_{m,\tr}\leq T\big)\nn\\
&+\E\big(|x(T)-Y^{\tr}(T)|^q \textbf 1_{\{\vartheta_{m,\tr}> T\}}\big).
\end{align}
For any $\epsilon>0$, in view of Theorem  \ref{oth1} and Theorem \ref{oth2}, choose an $\epsilon_1>0 $ sufficiently small  such that
 \begin{align}\la{th3.2+1}
 \frac{q \epsilon_1}{p}\E\big(|x(T)-Y^{\tr}(T)|^{p} \big)\leq\frac{2^{p-1}q \epsilon_1}{p}\E\big(|x(T)|^p+\sup_{\tr\in(0,1]}|Y^{\tr}(T)|^{p} \big)<\frac{\e}{3}.\end{align}
It follows from \eqref{o2.9} and \eqref{o3.31} that
\begin{align*}
\frac{p-q}{p \epsilon_1^{q/(p-q)}}\PP \big(\vartheta_{m,\tr}\leq T\big)
\leq \frac{p-q}{p \epsilon_1^{q/(p-q)}}\Big(\PP\big\{\delta_{m} \leq T\big\}+\PP\big\{\gamma_{m, \tr}\leq T\big\}\Big)
\leq \frac{(L_1+L_2)(p-q)}{m^pp \epsilon_1^{q/(p-q)}}.
\end{align*}
Choose an $m$ sufficiently large  such that
$$
\frac{(L_1+L_2)(p-q)}{m^pp \epsilon_1^{q/(p-q)}}<\frac{\e}{3},
$$
which implies
\begin{align}\la{th3.2+2}
\frac{p-q}{p \epsilon_1^{q/(p-q)}}\PP \big(\vartheta_{m,\tr}\leq T\big)
<\frac{\e}{3}.
\end{align}
Inserting \eqref{th3.2+1} and \eqref{th3.2+2} into  \eqref{o3.47} and using Lemma \ref{oth4} we derive
\begin{align*}
 \lim_{\tr\rightarrow0^+}\E|x(T)-Y^{\tr}(T)|^q=0.
\end{align*}
 The proof is complete.
\end{proof}
\subsection{Convergence rate in $L^p$ }\la{CR}
To study the convergence rate of the truncated EM scheme, we require the following  notations and assumptions.
 By $\mathcal{U}:=\mathcal{U}(\RR^n\times\RR^n;   ~\RR_+)$, denote the space of all continuous  functions $U(\cdot,\cdot)$  from $\RR^n\times\RR^n$ to $\RR_+$ such that for each $\ell>0$, there exists a  $\bar R_{\ell}>0$ for which
\begin{align}\la{calU}
U(x,\bar x)\leq \bar R_{\ell}|x-\bar x|,~~~\forall ~x,~\bar x\in\RR^n ~\hbox{with} ~|x|\vee|\bar x|\leq \ell.
\end{align}

\textbf{(A3).}
There are positive  constants $\mu\geq1/2$ and $a_4$    such that the initial data $\xi$ satisfies
\begin{align*}
|\xi(s_1)-\xi(s_2)|\leq a_4|s_1-s_2|^\mu,~~~\forall~s_1,~s_2\in[-\tau,~0].
\end{align*}

\textbf{(A4).}
There exist positive  constants $\bar q,~\bar p,~a_5$ satisfying $2\leq \bar q<\bar p$,
 and functions $U(\cdot)\in\mathcal{U}$, $\rho_2(\cdot)\in\mathcal W$ such that for any $~\phi,~\bar\phi\in \mathcal C$,
\begin{align*}
&|\phi(0)-\bar\phi(0)|^{\bar q-2}\Big(2\big(\phi(0)-\bar\phi(0)\big)^T \big(f(\phi)-f(\bar \phi)\big) +(\bar p-1)|g(\phi)-g(\bar\phi)|^2\Big)\nn\\
\leq& a_5\big(|\phi(0)-\bar\phi(0)|^{\bar q}
+\!\frac{1}{\tau}\int_{-\tau}^{0}|\phi(\theta)-\bar\phi(\theta)|^{\bar q}\mathrm d \theta\big)\!-U(\phi(0),\bar\phi(0))
+\!\int_{-\tau}^{0}U(\phi(\theta),\bar\phi(\theta))\rho_2(\theta)\mathrm d\theta.
\end{align*}

\textbf{(A5).}
There is a pair of positive constants $a_6$,~$r$  such that for any $\phi,~\bar\phi\in \mathcal C$,
\begin{align*}
 &|f(\phi)-f(\bar\phi)|\leq a_6\Big(|\phi(0)-\bar\phi(0)|+\frac{1}{\tau}\int_{-\tau}^{0}|\phi(\theta)-\bar\phi(\theta)|\mathrm d\theta\Big)\nn\\
 &\times\Big(1+|\phi(0)|^r+|\bar\phi(0)|^r+\frac{1}{\tau}\int_{-\tau}^{0}|\phi(\theta)|^r\mathrm d\theta+\frac{1}{\tau}\int_{-\tau}^{0}|\bar\phi(\theta)|^r\mathrm d\theta\Big),
\end{align*}
and
\begin{align}\label{o3.52}
&|g(\phi)-g(\bar\phi)|^2\leq a_6\Big(|\phi(0)-\bar\phi(0)|^2
+\frac{1}{\tau}\int_{-\tau}^{0}|\phi(\theta)-\bar\phi(\theta)|^2\mathrm d\theta\Big)\nn\\
 &\times\Big(1+|\phi(0)|^r+|\bar\phi(0)|^r+\frac{1}{\tau}\int_{-\tau}^{0}|\phi(\theta)|^r\mathrm d\theta+\frac{1}{\tau}\int_{-\tau}^{0}|\bar\phi(\theta)|^r\mathrm d\theta\Big).
\end{align}
\begin{rem}\label{r2} \rm{
Under (A2), (A4), and (A5),
by virtue of  \eqref{o3.1}  we may take the function $\Gamma(\cdot)$ by:
   \begin{align*}
\Gamma(l)=\sqrt2a_6(1+4l^r), ~~~\forall ~l\geq1.
\end{align*}
Then,
 \begin{align}\label{o3.54}
\Gamma^{-1}(l)=\big(\frac{l}{4\sqrt 2a_6}-\frac{1}{4}\big)^{\frac{1}{r}},~~~~l\geq 5\sqrt2a_6.
\end{align}
  Define
  \begin{align}\la{deterlambda}
  \lambda=\frac{\bar qr}{2(p-\bar q)}.
  \end{align} Obviously, if  $ \bar q< {p}/({r+1}),$  then $ 0<\lambda< {1}/{2}$.
}\end{rem}

%


We  will  estimate the error between the exact solution $\{x(t)\}_{t\geq -\tau}$ and the auxiliary  process $\{Z^{\tr}(t)\}_{t\geq -\tau}$,  and further obtain the convergence rate of the numerical solution $\{Y^{\tr}(t)\}_{t\geq -\tau}$.
For this purpose, we begin with proving that the moment of  $ Z^{\tr}(t) $ is bounded.
\begin{lemma}\la{bounded}
Assume that $(\textup{A}1)$ and $(\textup{A}2)$ hold.
Then
\begin{align*}
\sup_{\tr\in(0,1]}\sup_{t\geq-\tau}\E|Z^{\tr}(t)|^p\leq L.
\end{align*}
\end{lemma}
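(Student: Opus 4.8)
The plan is to split the estimate according to the sign of $t$. For $t\in[-\tau,0]$, the second line of \eqref{o3.27} exhibits $Z^{\tr}(t)$ as a convex combination of $Y^{\tr}(t_k)=\xi(t_k)$ and $Y^{\tr}(t_{k+1})=\xi(t_{k+1})$, so $|Z^{\tr}(t)|\le\|\xi\|$ and the bound is trivial there. All the work is for $t\ge0$, where I would fix $\tr\in(0,1]$, choose $k\in\mathbb N$ with $t\in[t_k,t_{k+1})$, and start from the first line of \eqref{o3.27}. Using $|a+b+c|^p\le 3^{p-1}(|a|^p+|b|^p+|c|^p)$ gives
$$\E|Z^{\tr}(t)|^p\le 3^{p-1}\Big(\E|Y^{\tr}(t_k)|^p+\tr^p\,\E|f(\bar Y^{\tr}_{t_k})|^p+\E\big|g(\bar Y^{\tr}_{t_k})(B(t)-B(t_k))\big|^p\Big).$$
The first term is controlled by Theorem \ref{oth2}; it remains to handle the drift and diffusion contributions.

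For those I would invoke the growth estimates \eqref{o3.6} and \eqref{o3.6+}, which yield $|f(\bar Y^{\tr}_{t_k})|^p\le L\tr^{-p\lambda}(\Phi(\bar Y^{\tr}_{t_k}))^{p/2}$ and $|g(\bar Y^{\tr}_{t_k})|^p\le L\tr^{-p\lambda/2}(\Phi(\bar Y^{\tr}_{t_k}))^{p/2}$. Since $\bar Y^{\tr}_{t_k}$ is $\mathcal F_{t_k}$-measurable while $B(t)-B(t_k)$ is independent of $\mathcal F_{t_k}$ with $\E|B(t)-B(t_k)|^p\le L(t-t_k)^{p/2}\le L\tr^{p/2}$, conditioning on $\mathcal F_{t_k}$ and using the tower property gives
$$\E\big|g(\bar Y^{\tr}_{t_k})(B(t)-B(t_k))\big|^p\le L\tr^{p/2}\,\E|g(\bar Y^{\tr}_{t_k})|^p\le L\tr^{\frac{p(1-\lambda)}{2}}\,\E\big(\Phi(\bar Y^{\tr}_{t_k})\big)^{p/2},$$
and likewise $\tr^p\,\E|f(\bar Y^{\tr}_{t_k})|^p\le L\tr^{p(1-\lambda)}\,\E(\Phi(\bar Y^{\tr}_{t_k}))^{p/2}$. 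Because $\lambda\in(0,1/2)$, both powers of $\tr$ stay bounded on $(0,1]$, so altogether
$$\E|Z^{\tr}(t)|^p\le L\,\E|Y^{\tr}(t_k)|^p+L\,\E\big(\Phi(\bar Y^{\tr}_{t_k})\big)^{p/2}.$$
Finally, the uniform bound $\sup_{\tr\in(0,1]}\sup_{k\ge-N}\E(\Phi(\bar Y^{\tr}_{t_k}))^{p/2}\le L$ — which is exactly \eqref{o3.10**}, established inside the proof of Theorem \ref{oth2} — together with \eqref{+o3.9} closes the estimate uniformly in $t\ge0$ and $\tr\in(0,1]$, completing the proof.

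There is no genuine obstacle here: this lemma is essentially a re-run of the terminal estimate in the proof of Lemma \ref{oth3}, but without the stopping time $\gamma_{m,\tr}$ and hence without the indicator bookkeeping. The only points needing a little care are (i) correctly applying the tower property so that the Brownian increment moments decouple from the $\mathcal F_{t_k}$-measurable coefficient values $f(\bar Y^{\tr}_{t_k})$, $g(\bar Y^{\tr}_{t_k})$, and (ii) citing the $\Phi$-moment bound \eqref{o3.10**} rather than merely the $Y$-moment bound \eqref{+o3.9}, since it is the former that, after combining with \eqref{o3.6}–\eqref{o3.6+}, controls $\E|f(\bar Y^{\tr}_{t_k})|^p$ and $\E|g(\bar Y^{\tr}_{t_k})|^p$.
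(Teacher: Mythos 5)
Your proposal is correct and follows essentially the same route as the paper: the same $3^{p-1}$ decomposition of the first line of \eqref{o3.27}, the growth bounds \eqref{o3.6}--\eqref{o3.6+} with $\lambda\in(0,1/2)$ to absorb the negative powers of $\tr$, and the uniform moment bound from Theorem \ref{oth2} (the paper converts $(\Phi(\bar Y^{\tr}_{t_k}))^{p/2}$ back into $\sup_k\E|Y^{\tr}(t_k)|^p$ rather than citing \eqref{o3.10**} directly, but these are interchangeable). Your explicit treatment of $t\in[-\tau,0]$ and of the conditioning on $\mathcal F_{t_k}$ only makes explicit what the paper leaves implicit.
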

\begin{proof}\textbf{Proof.}
Let  $\tr\in(0,1]$.
For any $t\in[t_k, t_{k+1})$ with $k\in\mathbb N$, making use of \eqref{o3.27} yields
\begin{align*}
\E|Z^{\tr}(t)|^p\leq 3^{p-1}\big(\E|Y^{\tr}(t_k)|^p+\E|f(\bar Y^{\tr}_{t_k})|^p\tr^p+\E|g(\bar Y^{\tr}_{t_k})|^p\tr^{\frac{p}{2}}\big).
\end{align*}
One notices from \eqref{o3.6} and \eqref{o3.6+} that
\begin{align*}
\E|Z^{\tr}(t)|^p
\leq& L(1+\tr^{p(1-\lambda)}+\tr^{\frac{p(1-\lambda)}{2}})\E\big(1+|Y^{\tr}(t_k)|^p
+\frac{1}{\tau}\int_{-\tau}^{0}|\bar Y_{t_k}^{\tr}(\theta)|^p\mathrm d\theta\big)\nn\\
\leq &L(1+\sup_{\tr\in(0,1]}\sup_{k\geq -\tau}\E|Y^{\tr}(t_k)|^p).
\end{align*}
Therefore, the desired assertion follows from \eqref{+o3.9}.
\end{proof}

For any $t\geq 0$, define
\begin{align*}
Z^{\tr}_{t}(\theta)=Z^{\tr}(t+\theta),~~~\forall ~\theta\in[-\tau,0].
\end{align*}
Choose a $\bar\tr\in(0,1]$ sufficiently small such that
$\Gamma^{-1}(K\bar\tr^{-\lambda})>\|\xi\|$.
\begin{lemma}\la{ztyt}
Assume that $\textup{(A2)}$--$(\textup A5)$   hold with $p\geq(2\vee r)$.
 Then for any   $\tr\in(0,\bar \tr]$,
\begin{align}\la{o3.57}
\sup_{t\geq0}\sup_{\theta\in[-\tau, 0]}\E |Z_t^{\tr}(\theta)-\bar Y_t^{\tr}(\theta)|^{\frac{2p}{r+2}}\leq L\tr^{\frac{p}{r+2}}.
\end{align}
\end{lemma}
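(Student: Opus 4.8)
The plan is to distinguish the two time regimes that appear in the definition of $Z^{\tr}$ and $\bar Y^{\tr}$, namely $t\in[t_k,t_{k+1})$ with $k\in\mathbb N$ (the genuinely stochastic part) and the interpolation region for negative indices, and on each of them bound the increment $Z_t^{\tr}(\theta)-\bar Y_t^{\tr}(\theta)$ in terms of $f(\bar Y^{\tr}_{t_j})$, $g(\bar Y^{\tr}_{t_j})$ and a Brownian increment over one step. Fix $t\geq 0$ and $\theta\in[-\tau,0]$, and write $t+\theta\in[t_j,t_{j+1})$ for a suitable index $j$ (possibly negative). If $j\geq 0$, then by \eqref{o3.27} and \eqref{o3.4}, $Z^{\tr}(t+\theta)-\bar Y^{\tr}_t(\theta)$ equals a linear combination of $f(\bar Y^{\tr}_{t_j})(t+\theta-t_j)$, $g(\bar Y^{\tr}_{t_j})(B(t+\theta)-B(t_j))$, and the quantity $Y^{\tr}(t_{j+1})-Y^{\tr}(t_j)=\br Y^{\tr}(t_{j+1})-Y^{\tr}(t_j)$ coming from the linear interpolation weight, which by \eqref{o3.3} is again $f(\bar Y^{\tr}_{t_j})\tr+g(\bar Y^{\tr}_{t_j})\tr B_j$ up to the truncation $\Lambda^{\tr,\lambda}_{\Gamma}$. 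So in all cases the difference is, up to numerical constants,
\begin{align*}
Z^{\tr}(t+\theta)-\bar Y^{\tr}_t(\theta)=O\big(|f(\bar Y^{\tr}_{t_j})|\tr\big)+O\big(|g(\bar Y^{\tr}_{t_j})|\,|B(t+\theta)-B(t_j)|\big)+(\text{a truncation defect term}),
\end{align*}
and when $t+\theta<0$ the relevant bound is instead governed by (A3), which gives $|Z^{\tr}(t+\theta)-\bar Y^{\tr}_t(\theta)|\leq a_4\tr^{\mu}\leq a_4\tr^{1/2}$ since $\mu\geq 1/2$, and $2p/(r+2)\cdot\tfrac12\geq p/(r+2)$, which is more than enough.

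Next I would take the $\tfrac{2p}{r+2}$-th moment and apply the elementary inequality $(a+b+c)^{\nu}\leq 3^{\nu-1}(a^{\nu}+b^{\nu}+c^{\nu})$ together with conditioning on $\mathcal F_{t_j}$ for the Brownian term (using $\E|B(t+\theta)-B(t_j)|^{\nu}\leq L\tr^{\nu/2}$). The key point is to control $\E|f(\bar Y^{\tr}_{t_j})|^{\frac{2p}{r+2}}$ and $\E|g(\bar Y^{\tr}_{t_j})|^{\frac{2p}{r+2}}$. From \eqref{o3.6}–\eqref{o3.6+} we have the deterministic bounds $|f(\bar Y^{\tr}_{t_j})|^2\leq L\tr^{-2\lambda}\Phi(\bar Y^{\tr}_{t_j})$ and $|g(\bar Y^{\tr}_{t_j})|^2\leq L\tr^{-\lambda}\Phi(\bar Y^{\tr}_{t_j})$; alternatively, using (A5) and the definition $\Gamma(l)=\sqrt2 a_6(1+4l^r)$ from Remark \ref{r2}, one gets $|f(\bar Y^{\tr}_{t_j})|,\ |g(\bar Y^{\tr}_{t_j})|^2\leq L(1+\|\bar Y^{\tr}_{t_j}\|^{r+1})$ pointwise, but the cleaner route is to exploit the structural bound $|f(\phi)|\vee|g(\phi)|^2\leq L\big(1+|\phi(0)|^{r+1}+\frac1\tau\int_{-\tau}^0|\phi(\theta)|^{r+1}\mathrm d\theta\big)$. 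Then $\E|f(\bar Y^{\tr}_{t_j})|^{\frac{2p}{r+2}}\tr^{\frac{2p}{r+2}}$ contributes $\tr^{\frac{2p}{r+2}}\cdot\E(1+\|\bar Y^{\tr}_{t_j}\|^{\frac{2p(r+1)}{r+2}})$; since $\frac{2p(r+1)}{r+2}\leq p$ is equivalent to $2(r+1)\leq r+2$, i.e. $r\leq 0$, this naive estimate is too lossy, so instead I would keep one factor of $\tr^{-\lambda}$ from the truncation: write $|f(\bar Y^{\tr}_{t_j})|^{\frac{2p}{r+2}}\tr^{\frac{2p}{r+2}}\leq (L\tr^{-2\lambda}\Phi(\bar Y^{\tr}_{t_j}))^{\frac{p}{r+2}}\tr^{\frac{2p}{r+2}}=L\tr^{\frac{2p(1-\lambda)}{r+2}}(\Phi(\bar Y^{\tr}_{t_j}))^{\frac{p}{r+2}}$, and similarly for $g$ one obtains $L\tr^{\frac{p(1-\lambda)}{r+2}}(\Phi(\bar Y^{\tr}_{t_j}))^{\frac{p}{r+2}}\cdot\tr^{\frac{p}{r+2}}=L\tr^{\frac{p(2-\lambda)}{r+2}}(\Phi(\bar Y^{\tr}_{t_j}))^{\frac{p}{r+2}}$ — wait, more carefully the $g$ term carries $|B(t+\theta)-B(t_j)|^{\frac{2p}{r+2}}$ whose conditional expectation is $L\tr^{\frac{p}{r+2}}$, so the $g$ contribution is $L\tr^{-\frac{\lambda p}{r+2}}(\Phi(\bar Y^{\tr}_{t_j}))^{\frac{p}{r+2}}\tr^{\frac{p}{r+2}}=L\tr^{\frac{(1-\lambda)p}{r+2}}(\Phi(\bar Y^{\tr}_{t_j}))^{\frac{p}{r+2}}$. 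Taking expectations and using $\E(\Phi(\bar Y^{\tr}_{t_j}))^{p/2}\leq L$ from \eqref{o3.10**}/Theorem \ref{oth2} together with Jensen (since $\tfrac{p}{r+2}\leq \tfrac p2$ because $r\geq 0$), we get $\E(\Phi(\bar Y^{\tr}_{t_j}))^{\frac{p}{r+2}}\leq L$.

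Assembling the pieces, the dominant exponent of $\tr$ is $\min\big\{\tfrac{2p(1-\lambda)}{r+2},\ \tfrac{(1-\lambda)p}{r+2},\ \tfrac{p}{r+2}\big\}=\tfrac{(1-\lambda)p}{r+2}$ when $\lambda\leq 1$, but since $\lambda<1/2$ we have $\tfrac{(1-\lambda)p}{r+2}>\tfrac{p}{2(r+2)}$, and in fact one wants exactly $\tfrac{p}{r+2}$; this forces choosing $\lambda$ so that $(1-\lambda)\geq$ something — actually the claimed rate $\tr^{p/(r+2)}$ requires $\tfrac{(1-\lambda)p}{r+2}\geq\tfrac{p}{r+2}$, i.e. $\lambda\leq 0$, which fails, so the genuinely correct bookkeeping must use the definition \eqref{deterlambda} $\lambda=\tfrac{\bar q r}{2(p-\bar q)}$ and the relation it encodes between the moment $\tfrac{2p}{r+2}$, the growth $r$, and $\lambda$; the main obstacle is precisely this exponent matching. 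Concretely, I expect that the right estimate keeps the $f$-term bound as $|f(\bar Y^{\tr}_{t_j})|^{\frac{2p}{r+2}}\tr^{\frac{2p}{r+2}}$ with $|f|\leq 2K\tr^{-\lambda}(\Phi)^{1/2}$ giving $\tr^{\frac{2p}{r+2}(1-\lambda)}$, and one checks $\tfrac{2p}{r+2}(1-\lambda)\geq \tfrac{p}{r+2}$ iff $\lambda\leq \tfrac12$, which holds, so the $f$-term is $o(\tr^{p/(r+2)})$; the $g$-term with the Brownian increment gives exactly $\tr^{\frac{p}{r+2}(1-\lambda)+\frac{p}{r+2}\cdot 0}$... rather $\tr^{\frac{p}{r+2}-\frac{\lambda p}{r+2}}$ from the $g$-size times $\tr^{\frac{p}{r+2}}$ from $\E|\Delta B|^{2p/(r+2)}$, total $\tr^{\frac{2p}{r+2}-\frac{\lambda p}{r+2}}$, wait — $|g|^{2}\leq L\tr^{-\lambda}\Phi$ so $|g|^{\frac{2p}{r+2}}\leq L\tr^{-\frac{\lambda p}{r+2}}\Phi^{\frac{p}{r+2}}$ and multiplying by $\tr^{\frac{p}{r+2}}$ gives $\tr^{\frac{(1-\lambda)p}{r+2}}$, which is $\geq\tr^{\frac{p}{r+2}}$ — the wrong direction again. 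The resolution, which I would carry out in detail, is that the exponent of $\tr$ we are proving a \emph{lower} bound on the power, i.e. \eqref{o3.57} asserts $\E|\cdot|^{2p/(r+2)}\leq L\tr^{p/(r+2)}$, and since $(1-\lambda)\cdot\tfrac{p}{r+2}\geq\tfrac{p}{r+2}$ would need $\lambda\le0$, the correct move is to \emph{not} spend a factor $\tr^{-\lambda}$ at all: bound $|g(\bar Y^{\tr}_{t_j})|^2\leq L\big(1+|Y^{\tr}(t_j)|^{r+2}+\tfrac1\tau\int|\bar Y^{\tr}_{t_j}(\theta)|^{r+2}\mathrm d\theta\big)$ directly from (A5)-type growth, so that $|g|^{\frac{2p}{r+2}}$ has exponent $\tfrac{2p}{r+2}\cdot\tfrac12\cdot\tfrac{r+2}{1}$... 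I would instead observe $\big(\E|g(\bar Y^{\tr}_{t_j})|^{\frac{2p}{r+2}}\big)$ is finite uniformly in $\tr,j$ because $|g|^2$ has growth $r+2$ in the sup-norm and the sup-norm has bounded $p$-th moment with $\tfrac{2p}{r+2}\cdot\tfrac{r+2}{2}=p$ — exactly matching — so $\sup_{\tr,j}\E|g(\bar Y^{\tr}_{t_j})|^{\frac{2p}{r+2}}\leq L$, and then the $g$-term of the increment contributes $L\cdot\tr^{\frac{p}{r+2}}$ from the Brownian increment alone, which is the claimed rate; the $f$-term contributes $\tr^{\frac{2p}{r+2}}\cdot L\leq L\tr^{\frac{p}{r+2}}$ trivially; and the interpolation-defect/initial-data terms are $O(\tr^{\mu\cdot\frac{2p}{r+2}})$ with $\mu\geq1/2$. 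Taking the supremum over $\theta$ and $t$ then yields \eqref{o3.57}. The one genuine subtlety, and where I would be most careful, is verifying uniform integrability of $|f|^{2p/(r+2)}$ and $|g|^{2p/(r+2)}$: this needs $p\geq r$ (assumed) plus the growth exponent $r+2$ matched against the available moment $p$ via Hölder, i.e. $\tfrac{2p}{r+2}\cdot\tfrac{r+2}{2}=p$, so Theorem \ref{oth2} applies exactly.
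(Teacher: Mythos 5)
Your proposal is essentially the paper's own argument: split on whether $t+\theta$ lands in the initial-data region (where (A3) with $\mu\ge 1/2$ gives $\tr^{p/(r+2)}$ directly) or in $[0,\infty)$ (where the difference reduces to one-step increments $f(\bar Y^{\tr}_{t_j})\tr+g(\bar Y^{\tr}_{t_j})\Delta B_j$ via the scheme and the interpolation), then bound the $f$-term by the truncation estimate $|f|^2\le L\tr^{-2\lambda}\Phi$ so that $\tr^{\frac{2p(1-\lambda)}{r+2}}\le\tr^{\frac{p}{r+2}}$ because $\lambda<1/2$, and bound the $g$-term by the polynomial growth in (A5) so that $|g|^{\frac{2p}{r+2}}$ has exactly $p$-th--order growth and Theorem \ref{oth2} applies, the rate $\tr^{p/(r+2)}$ then coming from $\E|\Delta B|^{2p/(r+2)}$. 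One caution: your closing summary asserts uniform integrability of $|f|^{2p/(r+2)}$ by the same exponent matching as for $g$, which fails since $f$ grows like order $r+1$; the correct treatment of $f$ is the truncation-based one you gave earlier, which is what the paper uses.
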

\begin{proof}\textbf{Proof.}
Let  $\tr\in(0,\bar \tr]$. For any $t\geq0$ there exists  a $k\in\mathbb N$  such that $t\in[t_k,t_{k+1})$.
For any $\theta\in[-\tau,0)$,  there exists  a $j\in\{-N, \cdots, -1\}$  such that $\theta\in[t_j,t_{j+1})$.
Clearly,
$$t+\theta\in[t_{k+j}, t_{k+j+2}),~~~t_k+\theta\in[t_{k+j},t_{k+j+1}).$$
According to the range of $t+\theta$, we devide the proof into five cases.\\
\underline{Case 1}.  If $t+\theta\in[t_{k+j}, t_{k+j+1})\subset[-\tau,0)$, by virtue of $\tr\in(0,\bar\tr]$ we get
\begin{align*}
&Z_{t}^{\tr}(\theta)-\bar Y_{t}^{\tr}(\theta)
=Z^{\tr}(t+\theta)-\bar Y^{\tr}(t_k+\theta)\nn\\
=&\frac{t_{k+j+1}-(t+\theta)}{\tr}\xi(t_{k+j})+\frac{(t+\theta)-t_{k+j}}{\tr}\xi(t_{k+j+1})\nn\\
&-(\frac{t_{k+j+1}-(t_k+\theta)}{\tr}\xi(t_{k+j})+\frac{(t_k+\theta)-t_{k+j}}{\tr}\xi(t_{k+j+1}))\nn\\
=&\frac{t-t_k}{\tr}(\xi(t_{k+j+1})-\xi(t_{k+j})).
\end{align*}
Making use of (A3),  we derive
\begin{align*}
\E\big|Z_{t}^{\tr}(\theta)-\bar Y_{t}^{\tr}(\theta)\big|^{\frac{2p}{r+2}}
\leq L\tr^\frac{p}{r+2}.
\end{align*}
\underline{Case 2}. If $t+\theta\in[t_{k+j+1}, t_{k+j+2})\subset[-\tau,0)$, we have
\begin{align*}
Z_{t}^{\tr}(\theta)-\bar Y_{t}(\theta)
=&\frac{t_{k+j+2}-(t+\theta)}{\tr}\xi(t_{k+j+1})+\frac{(t+\theta)-t_{k+j+1}}{\tr}\xi(t_{k+j+2})\nn\\
&-(\frac{t_{k+j+1}-(t_k+\theta)}{\tr}\xi(t_{k+j})+\frac{(t_k+\theta)-t_{k+j}}{\tr}\xi(t_{k+j+1}))\nn\\
=&\frac{(t+\theta)-t_{k+j+1}}{\tr}\big(\xi(t_{k+j+2})-\xi(t_{k+j+1})\big)\nn\\
&+\frac{t_{k+j+1}-(t_k+\theta)}{\tr}(\xi(t_{k+j+1})-\xi(t_{k+j})).
\end{align*}
It follows from (A3) that
\begin{align*}
\E\big|Z_{t}^{\tr}(\theta)-\bar Y_{t}^{\tr}(\theta)\big|^{\frac{2p}{r+2}}
\leq L\tr^\frac{p}{r+2}.
\end{align*}
\underline{Case 3}. If $t+\theta\in[t_{k+j}, t_{k+j+1})\subset [0,\infty)$, making use of \eqref{o3.4},
\eqref{o3.27}, and \eqref{segment}, we derive
\begin{align}\la{zyerr}
&|Z_{t}^{\tr}(\theta)-\bar Y_{t}^{\tr}(\theta)|
=|Z^{\tr}(t+\theta)-\bar Y^{\tr}(t_k+\theta)|\nn\\
=&\big|Y^{\tr}(t_{k+j})+f(\bar Y_{{t_{k+j}}}^{\tr})(t+\theta-t_{k+j})+g(\bar Y_{{t_{k+j}}}^{\tr})\big(B(t+\theta)-B(t_{k+j})\big)\nn\\
&-\big(\frac{(j+1)\tr-\theta}{\tr}Y^{\tr}(t_{k+j})+\frac{\theta-j\tr}{\tr}Y^{\tr}(t_{k+j+1})\big)\big|\nn\\
\leq&\big|f(\bar Y_{{t_{k+j}}}^{\tr})(t+\theta-t_{k+j})+g(\bar Y_{{t_{k+j}}}^{\tr})\big(B(t+\theta)-B(t_{k+j})\big)\big|\nn\\
&+\frac{\theta-j\tr}{\tr}\big|Y^{\tr}(t_{k+j+1})-Y^{\tr}(t_{k+j})\big|.
\end{align}
Using the techniques in  the proof of \cite[(7.21)]{Li2018} and together with \eqref{o3.3} we get
\begin{align}\la{left}
|Y^{\tr}(t_{k+j+1})-Y^{\tr}(t_{k+j})|
\leq &|\br Y^{\tr}(t_{k+j+1})-Y^{\tr}(t_{k+j})|\nn\\
\leq&|f(\bar Y_{{t_{k+j}}}^{\tr})\tr+g(\bar Y_{{t_{k+j}}}^{\tr})\tr B_{k+j}|.
\end{align}
Substituting \eqref{left} into \eqref{zyerr} leads to
\begin{align}\la{+zyerr}
\E\big|Z_{t}^{\tr}(\theta)-\bar Y_{t}^{\tr}(\theta)\big|^{\frac{2p}{r+2}}
\leq L\E\big(|f(\bar Y_{{t_{k+j}}}^{\tr})|^{\frac{2p}{r+2}}\tr^{\frac{2p}{r+2}}
+|g(\bar Y_{{t_{k+j}}}^{\tr})|^{\frac{2p}{r+2}}\tr^{\frac{p}{r+2}}\big).
\end{align}
It follows from \eqref{o3.52}  that
\begin{align*}
&|g(\bar Y_{{t_{k+j}}}^{\tr})|^{\frac{2p}{r+2}}
\leq L |g(\bar Y_{{t_{k+j}}}^{\tr})-g(\mathbf{0})|^{\frac{2p}{r+2}}+
L|g(\mathbf{0})|^{\frac{2p}{r+2}}\nn\\
\leq&L(|Y^{\tr}(t_{k+j})|^2+\frac{1}{\tau}\int_{-\tau}^{0}|\bar Y_{{t_{k+j}}}^{\tr}(\theta)|^2\mathrm d\theta)^\frac{p}{r+2}(1+|Y^{\tr}(t_{k+j})|^r+\frac{1}{\tau}\int_{-\tau}^{0}|\bar Y_{{t_{k+j}}}^{\tr}(\theta)|^r\mathrm d\theta)^{\frac{p}{r+2}}+ L.
\end{align*}
Making use of the Young inequality we obtain
\begin{align*}
|g(\bar Y_{{t_{k+j}}}^{\tr})|^{\frac{2p}{r+2}}
\leq&L(|Y^{\tr}(t_{k+j})|^2+\frac{1}{\tau}\int_{-\tau}^{0}|\bar Y_{{t_{k+j}}}^{\tr}(\theta)|^2\mathrm d\theta)^{\frac{p}{2}}\nn\\
&+L(1+|Y^{\tr}(t_{k+j})|^r+\frac{1}{\tau}\int_{-\tau}^{0}|\bar Y_{{t_{k+j}}}^{\tr}(\theta)|^r\mathrm d\theta)^{\frac{p}{r}}+L.
\end{align*}
Using $p\geq(2\vee r)$ and applying the H\"{o}lder inequality yields
\begin{align}\label{o3.53}
|g(\bar Y_{{t_{k+j}}}^{\tr})|^{\frac{2p}{r+2}}
\leq& L(1+|Y^{\tr}(t_{k+j})|^{p}
+\frac{1}{\tau}\int_{-\tau}^{0}|\bar Y_{{t_{k+j}}}^{\tr}(\theta)|^{p}\mathrm d\theta).
\end{align}
Inserting \eqref{o3.53} into \eqref{+zyerr}, and then using  \eqref{o3.6}  and $\lambda\in(0,1/2)$, we have
\begin{align*}
&\E\big|Z_{t}^{\tr}(\theta)-\bar Y_{t}^{\tr}(\theta)\big|^{\frac{2p}{r+2}}\nn\\
 \leq& L\E\big(1+|Y^{\tr}(t_{k+j})|^2+\frac{1}{\tau}\int_{-\tau}^{0}|\bar Y^{\tr}_{t_{k+j}}(\theta)|^2\mathrm d\theta\big)^{\frac{p}{r+2}}\tr^{\frac{2p(1-\lambda)}{r+2}
 }\nn\\
& +L\E(1+|Y^{\tr}(t_{k+j})|^{p}
+\frac{1}{\tau}\int_{-\tau}^{0}|\bar Y_{{t_{k+j}}}^{\tr}(\theta)|^{p}\mathrm d\theta)\tr^{\frac{p}{r+2}}\nn\\
 \leq&L\E(1+|Y^{\tr}(t_{k+j})|^{p}
+\frac{1}{\tau}\int_{-\tau}^{0}|\bar Y_{{t_{k+j}}}^{\tr}(\theta)|^{p}\mathrm d\theta)\tr^{\frac{p}{r+2}}.
\end{align*}
This, along with \eqref{+o3.9} implies that
\begin{align*}
\E\big|Z_{t}^{\tr}(\theta)-\bar Y_{t}^{\tr}(\theta)\big|^{\frac{2p}{r+2}}
\leq L\tr^{\frac{p}{r+2}}.
\end{align*}
\underline{Case 4}. If $t+\theta\in[t_{k+j+1}, t_{k+j+2})=[0,\tr)$, we observe that $t_k+\theta\in[-\tr,0)$ and
\begin{align*}
\E\big|Z_{t}^{\tr}(\theta)-\bar Y_{t}^{\tr}(\theta)\big|^{\frac{2p}{r+2}}
\leq &L\E\big|f(\bar Y^{\tr}_0)(t+\theta)+g(\bar Y^{\tr}_0)B(t+\theta)\big|^{\frac{2p}{r+2}}\nn\\
&+L\big|\xi(0)
-\xi(-\tr)\big|^\frac{2p}{r+2}
\leq L\tr^\frac{p}{r+2}.
 \end{align*}
\underline{Case 5}. If $t+\theta\in[t_{k+j+1}, t_{k+j+2})\subset [\tr,\infty)$, we compute
\begin{align*}
&\big|Z_{t}^{\tr}(\theta)-\bar Y_{t}^{\tr}(\theta)\big|\nn\\
=&\big|Y^{\tr}(t_{k+j+1})+f(\bar Y_{t_{k+j+1}}^{\tr})(t+\theta-t_{k+j+1})
+g(\bar Y_{t_{k+j+1}}^{\tr})\big(B(t+\theta)-B(t_{k+j+1})\big)\nn\\
&-\big(\frac{(j+1)\tr-\theta}{\tr}Y^{\tr}(t_{k+j})
+\frac{\theta-j\tr}{\tr}Y^{\tr}(t_{k+j+1})\big)\big|\nn\\
\leq &L\big|f(\bar Y_{t_{k+j+1}}^{\tr})(t+\theta-t_{k+j+1})+g(\bar Y_{t_{k+j+1}}^{\tr})\big(B(t+\theta)-B(t_{k+j+1})\big|\nn\\
&+L\big|Y^{\tr}(t_{k+j+1})-Y^{\tr}(t_{k+j})\big|.
\end{align*}
In a similar way as Case 3 was shown, we derive
$$\E\big|Z_{t}^{\tr}(\theta)-\bar Y_{t}^{\tr}(\theta)\big|^\frac{2p}{r+2}
\leq L\tr^\frac{p}{r+2}.$$
Combining the above five cases  together, we get
 $$\sup_{t\geq0}\sup_{\theta\in[-\tau,0)}\E |Z_t^{\tr}(\theta)-\bar Y_t^{\tr}(\theta)|^{\frac{2p}{r+2}}\leq L\tr^{\frac{p}{r+2}}.$$
In addition,
\begin{align*}
\E\big|Z_{t}^{\tr}(0)-\bar Y_{t}^{\tr}(0)\big|^{\frac{2p}{r+2}}
=\E\big|f(\bar Y_{{t_{k}}}^{\tr})(t-t_{k})+g(\bar Y_{{t_{k}}}^{\tr})\big(B(t)-B(t_{k})\big)\big|^{\frac{2p}{r+2}}\leq L\tr^{\frac{p}{r+2}}.
\end{align*}
The proof is therefore complete.
\end{proof}

For any  $\tr\in(0,\bar \tr]$, let $\vartheta_{\tr}:=\delta_{\Gamma^{-1}(K\tr^{-\lambda})}\wedge\gamma_{\Gamma^{-1}(K\tr^{-\lambda}),\tr}$.
\begin{lemma}\la{ratetwo}
Assume that $\textup{(A2)}$, $\textup{(A4)}$, and $(\textup A5)$ hold with
$\bar q\in(0, p/(r+1))$. Then for any  $\tr\in(0,\bar\tr]$ and $T>0$,
\begin{align*}
\E\big(|x(T)-Z^{\tr}(T)|^{\bar q}\textup{\textbf 1}_{\{ \vartheta_{\tr}\leq T\}}\big)\leq L_3\tr^{\frac{\bar q}{2}},
\end{align*}
where the constant $L_3$ depends on $T$ but is independent of  $\tr$.
\end{lemma}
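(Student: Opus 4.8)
The plan is to bound the error on the exceptional event $\{\vartheta_{\tr}\le T\}$ using only a uniform $p$th‑moment estimate together with the smallness of $\PP\{\vartheta_{\tr}\le T\}$, with no pathwise analysis. Throughout write $m_{\tr}:=\Gamma^{-1}(K\tr^{-\lambda})$, so that $\vartheta_{\tr}=\delta_{m_{\tr}}\wedge\gamma_{m_{\tr},\tr}$; since \textup{(A5)} forces the local Lipschitz condition \textup{(A1)}, the earlier results Theorem~\ref{oth1}, Lemma~\ref{oth3} and Lemma~\ref{bounded} are all available. Because $\Gamma^{-1}$ is increasing and $\tr\mapsto\tr^{-\lambda}$ is decreasing, the choice of $\bar\tr$ yields $m_{\tr}\ge\Gamma^{-1}(K\bar\tr^{-\lambda})>\|\xi\|$ for every $\tr\in(0,\bar\tr]$, which is exactly the hypothesis needed to invoke the stopping‑time bounds at truncation level $m_{\tr}$.

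First I would apply the H\"{o}lder inequality with the conjugate exponents $p/\bar q$ and $p/(p-\bar q)$ (admissible since $\bar q<p/(r+1)<p$) to get
\begin{align*}
\E\big(|x(T)-Z^{\tr}(T)|^{\bar q}\,\textbf 1_{\{\vartheta_{\tr}\le T\}}\big)
\le\big(\E|x(T)-Z^{\tr}(T)|^{p}\big)^{\bar q/p}\big(\PP\{\vartheta_{\tr}\le T\}\big)^{(p-\bar q)/p}.
\end{align*}
The first factor is bounded by a constant uniform in $\tr$: one has $\E|x(T)-Z^{\tr}(T)|^{p}\le 2^{p-1}\big(\E|x(T)|^{p}+\E|Z^{\tr}(T)|^{p}\big)\le L$ by \eqref{o2.7} in Theorem~\ref{oth1} and by Lemma~\ref{bounded}. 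For the second factor, a union bound and the estimates \eqref{o2.9}, \eqref{o3.31} — the latter applied at step size $\tr$ itself, which is admissible because $\Gamma^{-1}(K\tr^{-\lambda})\ge m_{\tr}$ — give
\begin{align*}
\PP\{\vartheta_{\tr}\le T\}\le\PP\{\delta_{m_{\tr}}\le T\}+\PP\{\gamma_{m_{\tr},\tr}\le T\}\le\frac{L_1+L_2}{m_{\tr}^{\,p}},
\end{align*}
where $L_1,L_2$ depend on $T$ but are independent of $m_{\tr}$ and $\tr$. Combining the two factors gives $\E\big(|x(T)-Z^{\tr}(T)|^{\bar q}\,\textbf 1_{\{\vartheta_{\tr}\le T\}}\big)\le L\,m_{\tr}^{-(p-\bar q)}$.

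It then remains to convert the gain $m_{\tr}^{-(p-\bar q)}$ into a power of $\tr$. Using the explicit formula \eqref{o3.54} from Remark~\ref{r2}, $m_{\tr}=\big(K\tr^{-\lambda}/(4\sqrt2\,a_6)-1/4\big)^{1/r}$, so after possibly shrinking $\bar\tr$ one has $m_{\tr}\ge c_0\,\tr^{-\lambda/r}$ on $(0,\bar\tr]$ for some constant $c_0>0$, whence $m_{\tr}^{-(p-\bar q)}\le c_0^{-(p-\bar q)}\,\tr^{\lambda(p-\bar q)/r}$. With the prescribed value $\lambda=\bar qr/(2(p-\bar q))$ from \eqref{deterlambda} one checks at once that $\lambda(p-\bar q)/r=\bar q/2$, giving the claim with a constant $L_3$ depending on $T$ but not on $\tr$. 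I do not expect a genuine obstacle here: the proof is a single H\"{o}lder split into quantities already established in Theorem~\ref{oth1}, Lemma~\ref{oth3} and Lemma~\ref{bounded}. The only point requiring care is the exponent bookkeeping — one must check that the truncation threshold $m_{\tr}=\Gamma^{-1}(K\tr^{-\lambda})$ grows fast enough in $1/\tr$, which is precisely where the condition $\bar q<p/(r+1)$ (equivalently $\lambda\in(0,1/2)$) and the exact value of $\lambda$ in \eqref{deterlambda} are used, and that the constants $L_1,L_2$ in the stopping‑time estimates are genuinely uniform in $m$ and $\tr$, as the cited results assert.
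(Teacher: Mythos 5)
Your proposal is correct and takes essentially the same approach as the paper: where you apply the H\"{o}lder inequality with exponents $p/\bar q$ and $p/(p-\bar q)$, the paper applies the Young inequality with weight $\tr^{\bar q/2}$ to the same product, but both arguments reduce to the uniform $p$th-moment bounds of Theorem~\ref{oth1} and Lemma~\ref{bounded}, the tail estimates \eqref{o2.9} and \eqref{o3.31} evaluated at the level $m=\Gamma^{-1}(K\tr^{-\lambda})$, and the identical exponent bookkeeping via \eqref{o3.54} and \eqref{deterlambda} yielding the rate $\tr^{\bar q/2}$.
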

\begin{proof}\textbf{Proof.}
Using the Young inequality we derive
\begin{align*}
\E\big(|x(T)-Z^{\tr}(T)|^{\bar q}\textbf 1_{\{ \vartheta_{\tr}\leq T\}}\big)
\leq \frac{\bar q\tr^{\frac{\bar q}{2}}}{p}\E|x(T)-Z^{\tr}(T)|^{p}
+\frac{p-\bar q}{p\tr^{\frac{\bar q^2}{2(p-\bar q)}}}\PP( \vartheta_{\tr}\leq T).
\end{align*}
In view of Theorem \ref{oth1} and Lemma \ref{bounded},  we get
\begin{align*}
&\frac{\bar q\tr^{\frac{\bar q}{2}}}{p}\E|x(T)-Z^{\tr}(T)|^{p}
\leq  L\tr^{\frac{\bar q}{2}
}\big(\E|x(T)|^{p}+\E|Z^{\tr}(T)|^p\big)
\leq L\tr^{\frac{\bar q}{2}}.
\end{align*}
According to  \eqref{o2.9} and \eqref{o3.31} yields
\begin{align*}
\frac{p-\bar q}{p\tr^{\frac{\bar q^2}{2(p-\bar q)}}}\PP( \vartheta_{\tr}\leq T)
\leq &\frac{p-\bar q}{p\tr^{\frac{\bar q^2}{2(p-\bar q)}}}\Big(\PP(\delta_{\Gamma^{-1}(K\tr^{-\lambda})}\leq T)+\PP( \gamma_{\Gamma^{-1}(K\tr^{-\lambda}),\tr}\leq T)\Big)\nn\\
\leq &\frac{p-\bar q}{p\tr^{\frac{\bar q^2}{2(p-\bar q)}}}\frac{L_1+L_2}{\big(\Gamma^{-1}(K\tr^{-\lambda})\big)^p}.
\end{align*}
This, along with \eqref{o3.54} and \eqref{deterlambda} implies that
\begin{align*}
\frac{p-\bar q}{p\tr^{\frac{\bar q^2}{2(p-\bar q)}}}\PP( \vartheta_{\tr}\leq T)\leq L_3\tr^{\frac{\bar qp}{2(p-\bar q)}-\frac{\bar q^2}{2(p-\bar q)}}
=  L_3\tr^{\frac{\bar q}{2}},
\end{align*}
where the constant $L_3$ depends on $T$ but is independent of  $\tr$.
\end{proof}

\begin{lemma}\la{rateone}
Assume that $\textup{(A2)}$--$(\textup A5)$  hold with
$$ 3r+2\leq p
~~\hbox{and}~~
\bar q\in[2,\frac{2p}{3r+2}]\cap[2,\bar p).$$ Then for any $\tr\in(0,\bar \tr]$ and $T>0$,
\begin{align}\la{o3.59}
\E|x(T)-Z^{\tr}(T)|^{\bar q}\leq L_4\tr^{\frac{\bar q}{2}},
\end{align}
where the constant $L_4$ depends on $T$ but is independent of  $\tr$.
\end{lemma}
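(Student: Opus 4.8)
The plan is to run the stopping-time decomposition that is standard for truncated schemes, with Lemma~\ref{ratetwo} handling the ``bad set'' and an It\^o-calculus/Gronwall argument handling the complement. Writing $m_{\tr}:=\Gamma^{-1}(K\tr^{-\lambda})$, so that $\vartheta_{\tr}=\delta_{m_{\tr}}\wedge\gamma_{m_{\tr},\tr}$, I would first split
$$\E|x(T)-Z^{\tr}(T)|^{\bar q}=\E\big(|x(T)-Z^{\tr}(T)|^{\bar q}\,\textbf{1}_{\{\vartheta_{\tr}\le T\}}\big)+\E\big(|x(T)-Z^{\tr}(T)|^{\bar q}\,\textbf{1}_{\{\vartheta_{\tr}>T\}}\big).$$
The first term is at most $L_3\tr^{\bar q/2}$ by Lemma~\ref{ratetwo}, whose hypothesis $\bar q\in(0,p/(r+1))$ holds because $\bar q\le 2p/(3r+2)$ and $r>0$ together force $2p/(3r+2)<p/(r+1)$ (and $3r+2\le p$ also secures $p\ge 2\vee r$, so Lemma~\ref{ztyt} and Lemma~\ref{bounded} apply). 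On $\{\vartheta_{\tr}>T\}$ one has $T\wedge\vartheta_{\tr}=T$, so the second term is bounded by $\E|x(T\wedge\vartheta_{\tr})-Z^{\tr}(T\wedge\vartheta_{\tr})|^{\bar q}$, and it suffices to prove $\sup_{0\le t\le T}\E|x(t\wedge\vartheta_{\tr})-Z^{\tr}(t\wedge\vartheta_{\tr})|^{\bar q}\le L\tr^{\bar q/2}$.

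To this end I would use \eqref{o3.30} and \eqref{o1.1} (noting $Y^{\tr}(0)=\xi(0)=x(0)$, since $\|\xi\|<m_{\bar\tr}\le m_{\tr}$) to write $e(t):=x(t\wedge\vartheta_{\tr})-Z^{\tr}(t\wedge\vartheta_{\tr})$ as
$$e(t)=\int_0^{t\wedge\vartheta_{\tr}}\!\big(f(x_s)-f(\bar Y^{\tr}_s)\big)\mathrm ds+\int_0^{t\wedge\vartheta_{\tr}}\!\big(g(x_s)-g(\bar Y^{\tr}_s)\big)\mathrm dB(s),$$
apply the It\^o formula to $|e(t)|^{\bar q}$ (here $\bar q\ge2$ is used), take expectations so the stochastic integral drops, and merge the two second-order terms via $|(g(x_s)-g(\bar Y^{\tr}_s))^Te(s)|^2\le|e(s)|^2|g(x_s)-g(\bar Y^{\tr}_s)|^2$, reaching
$$\E|e(t)|^{\bar q}\le\frac{\bar q}{2}\,\E\!\int_0^{t\wedge\vartheta_{\tr}}\!|e(s)|^{\bar q-2}\Big(2e(s)^T\big(f(x_s)-f(\bar Y^{\tr}_s)\big)+(\bar q-1)\big|g(x_s)-g(\bar Y^{\tr}_s)\big|^2\Big)\mathrm ds.$$

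The heart of the matter is to pass from $e(s)=x(s)-Z^{\tr}(s)$ to $\hat e(s):=x(s)-\bar Y^{\tr}_s(0)$ in both the weight $|e(s)|^{\bar q-2}$ and the inner product, since only with that matching does (A4) (applied with $\phi=x_s$, $\bar\phi=\bar Y^{\tr}_s$, using $\bar q\le\bar p$) bound the bracket by
$$a_5\Big(|\hat e(s)|^{\bar q}+\tfrac1\tau\!\int_{-\tau}^0\!|x(s+\theta)-\bar Y^{\tr}_s(\theta)|^{\bar q}\mathrm d\theta\Big)-U\big(x(s),\bar Y^{\tr}_s(0)\big)+\int_{-\tau}^0\! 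U\big(x(s+\theta),\bar Y^{\tr}_s(\theta)\big)\rho_2(\theta)\,\mathrm d\theta.$$
On $\{s<\vartheta_{\tr}\}$ the arguments $x(s)$, $Z^{\tr}(s)$, $\bar Y^{\tr}_s(\theta)$ all lie in $\{|\cdot|\le m_{\tr}\}$, so by (A5) the increments $|f(x_s)-f(\bar Y^{\tr}_s)|$, $|g(x_s)-g(\bar Y^{\tr}_s)|$ are dominated by a weighted $L^2$ distance times a growth factor of order $m_{\tr}^{\,r}\asymp\tr^{-\lambda}$ (by \eqref{o3.54}, \eqref{deterlambda}), while the discrepancy $e(s)-\hat e(s)=Z^{\tr}(s)-\bar Y^{\tr}_s(0)=f(\bar Y^{\tr}_{t_k})(s-t_k)+g(\bar Y^{\tr}_{t_k})(B(s)-B(t_k))$ on $[t_k,t_{k+1})$ has, by \eqref{o3.6}, \eqref{o3.6+} and Theorem~\ref{oth2}, $\E|e(s)-\hat e(s)|^{q}\le L\tr^{q(1-\lambda)/2}$ for every $q\le p$. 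Feeding these bounds in and invoking Young's and H\"older's inequalities, every correction created by the substitution $e\rightsquigarrow\hat e$ comes out of order $\tr^{\bar q/2}$; likewise the memory term $\tfrac1\tau\int_{-\tau}^0|x(s+\theta)-\bar Y^{\tr}_s(\theta)|^{\bar q}\mathrm d\theta$ together with the two $U$-terms, after integrating in $s$, applying Fubini with the one-sided structure of (A4), replacing $\bar Y^{\tr}_s(\theta)$ by $Z^{\tr}(s+\theta)$ at the cost of Lemma~\ref{ztyt}, and treating the range $s+\theta\in[-\tau,0]$ with (A3), is bounded by $L\int_0^{t}\sup_{0\le u\le s}\E|e(u)|^{\bar q}\,\mathrm ds+L\tr^{\bar q/2}$. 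This balancing is exactly where the standing restrictions $3r+2\le p$ and $\bar q\in[2,2p/(3r+2)]$ are consumed: they keep all the H\"older exponents compatible with the $\tr^{-\lambda}$ blow-up so that the order $\tr^{\bar q/2}$ survives and is not degraded.

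Putting the estimates together gives, for $t\in[0,T]$,
$$\E|e(t)|^{\bar q}\le L\tr^{\bar q/2}+L\int_0^t\sup_{0\le u\le s}\E|e(u)|^{\bar q}\,\mathrm ds,$$
so the continuous Gronwall inequality yields $\sup_{0\le t\le T}\E|e(t)|^{\bar q}\le L\tr^{\bar q/2}$; combining this with the Lemma~\ref{ratetwo} estimate proves \eqref{o3.59}. The hard part will be the bookkeeping of the previous paragraph: one must check that every term produced by the replacements $e\rightsquigarrow\hat e$ and $\bar Y^{\tr}_s(\theta)\rightsquigarrow Z^{\tr}(s+\theta)$ is genuinely $O(\tr^{\bar q/2})$ rather than only $O(\tr^{1/2})$, which is precisely what forces one to carry the full (up to $p$-th order) moment estimates of the one-step increments and of $Z^{\tr}-\bar Y^{\tr}$ coming from Theorems~\ref{oth1}, \ref{oth2} and Lemmas~\ref{bounded}, \ref{ztyt}, rather than crude $L^1$ bounds, and what pins down the admissible range of $\bar q$ in terms of $p$ and $r$.
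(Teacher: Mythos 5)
Your overall architecture coincides with the paper's: split at $\vartheta_{\tr}$, dispose of $\{\vartheta_{\tr}\le T\}$ via Lemma \ref{ratetwo} (your check that $\bar q\le 2p/(3r+2)<p/(r+1)$ and $p\ge 2\vee r$ is correct), then It\^o's formula plus Gronwall on the complement, with (A5), Lemma \ref{ztyt} and the moment bounds supplying the $O(\tr^{\bar q/2})$ corrections. The gap is in the central substitution step. You apply (A4) with $\bar\phi=\bar Y^{\tr}_s$, which forces you to (i) shift the It\^o weight from $e(s)=x(s)-Z^{\tr}(s)$ to $\hat e(s)=x(s)-\bar Y^{\tr}_s(0)$ and (ii) run the Fubini cancellation of the $U$-terms with $\bar Y^{\tr}_s(\theta)$ in the second slot. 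Step (ii) fails. The one-sided structure of (A4) is only useful because, after changing the order of integration, $\int_0^{T\wedge\vartheta_{\tr}}\int_{-\tau}^0 U(x(s+\theta),\,\cdot\,)\rho_2(\theta)\,\mathrm d\theta\,\mathrm ds$ is absorbed by $-\int_0^{T\wedge\vartheta_{\tr}}U(x(s),\,\cdot\,)\,\mathrm ds$, and this requires the second argument to be a single path evaluated at $s+\theta$. The segment $\bar Y^{\tr}_s(\theta)$ is piecewise constant in $s$, so for fixed $u=s+\theta$ it is not a function of $u$ alone; and your proposed repair --- ``replacing $\bar Y^{\tr}_s(\theta)$ by $Z^{\tr}(s+\theta)$ at the cost of Lemma \ref{ztyt}'' --- is not licensed by \eqref{calU}, which only bounds $U(x,\bar x)$ from above by $\bar R_\ell|x-\bar x|$ and provides neither a Lipschitz modulus of $U$ in its second argument nor a lower bound that would convert distances back into $U$-values to feed the cancellation. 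Even granting such a Lipschitz property, the swap would cost a \emph{first} power of $|Z^{\tr}(s+\theta)-\bar Y^{\tr}_s(\theta)|$, i.e.\ $O(\tr^{1/2})$, which already degrades the target rate $\tr^{\bar q/2}$ for every $\bar q\ge2$.

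The paper's proof avoids all of this by choosing $Z^{\tr}_t$, not $\bar Y^{\tr}_t$, as the comparison point in (A4): see \eqref{fg+1}. Then the It\^o weight $x(t)-Z^{\tr}(t)=x_t(0)-Z^{\tr}_t(0)$ matches with no substitution, the delayed $U$-term is literally $U(x(s+\theta),Z^{\tr}(s+\theta))$ and cancels exactly against the instantaneous one after Fubini (only the initial segment survives, handled by (A3) and \eqref{calU}), and the discrepancy between $Z^{\tr}_t$ and $\bar Y^{\tr}_t$ enters only additively through $|f(Z^{\tr}_t)-f(\bar Y^{\tr}_t)|$ and $|g(Z^{\tr}_t)-g(\bar Y^{\tr}_t)|^2$, where (A5), the H\"older inequality with exponents $\frac{3r+2}{r+2}$ and $\frac{3r+2}{2r}$, Lemma \ref{ztyt}, Theorem \ref{oth2} and Lemma \ref{bounded} give a genuine $O(\tr^{\bar q/2})$ bound --- this is exactly where $\bar q\le 2p/(3r+2)$ is consumed, as you anticipated. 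Your drift/diffusion weight-substitution could probably be pushed through with the same exponent bookkeeping, but the $U$-terms force the paper's choice of intermediate; you should restructure the argument around that insertion rather than around $e\rightsquigarrow\hat e$.
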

\begin{proof}\textbf{Proof.}
For any ~$\tr\in(0,\bar\tr]$ and $T>0$, by  Lemma \ref{ratetwo}  we get
\begin{align*}
\E|x(T)-Z^{\tr}(T)|^{\bar q}
=&\E\big(|x(T)-Z^{\tr}(T)|^{\bar q}\textup{\textbf 1}_{\{ \vartheta_{\tr}\leq T\}}\big)+\E\big(|x(T)-Z^{\tr}(T)|^{\bar q}\textup{\textbf 1}_{\{ \vartheta_{\tr}> T\}}\big)\nn\\
\leq& L_3\tr^{\frac{\bar q}{2} }+\E\big(|x(T)-Z^{\tr}(T)|^{\bar q}\textup{\textbf 1}_{\{ \vartheta_{\tr}>T\}}\big).
\end{align*}
We only require to show
\begin{align}\la{error+}
\E\big(|x(T)-Z^{\tr}(T)|^{\bar q}\textup{\textbf 1}_{\{ \vartheta_{\tr}>T\}}\big)\leq \hat L\tr^{\frac{\bar q}{2} },
\end{align}
where the constant $\hat L$ depends on $T$ but is independent of  $\tr$.
By virtue of \eqref{o1.1} and \eqref{o3.27}, we obtain
\begin{align*}
&x(T\wedge \vartheta_{\tr})-Z^{\tr}(T\wedge\vartheta_{\tr})\nn\\
=&\int_{0}^{T\wedge \vartheta_{\tr}}\big(f(x_t)-f(\bar Y_t^{\tr})\big)\mathrm dt+\int_{0}^{T\wedge\vartheta_{\tr}}\big(g(x_t)-g(\bar Y_t^{\tr})\big)\mathrm dB(t).
\end{align*}
Applying the It\^{o} formula  yields
\begin{align}\la{total}
&\E \big|x(T\wedge \vartheta_{\tr})-Z^{\tr}(T\wedge\vartheta_{\tr})\big|^{\bar q}\nn\\
\leq&\frac{\bar q}{2}\E\int_{0}^{T\wedge\vartheta_{\tr}}|x(t)-Z^{\tr}(t)|^{\bar q-2}
\Big(2\big(x(t)-Z^{\tr}(t)\big)^T \big(f(x_t)-f(\bar Y_t^{\tr})\big)\nn\\
&+(\bar q-1)|g(x_t)-g(\bar Y_t^{\tr})|^2\Big)\mathrm dt.
\end{align}
In view of $\bar q<\bar p$ and applying the Young inequality,  we derive
\begin{align}\la{fg+1}
&2\big(x(t)-Z^{\tr}(t)\big)^T\big(f(x_t)-f(\bar Y_t^{\tr})\big)+(\bar q-1)|g(x_t)-g(\bar Y_t^{\tr})|^2\nn\\
\leq&2\big(x(t)-Z^{\tr}(t)\big)^T \big(f(x_t)-f(Z_t^{\tr})\big)
+(\bar p-1)|g(x_t)-g(Z^{\tr}_t)|^2\nn\\
&+2|x(t)-Z^{\tr}(t)||f(Z_t^{\tr})-f(\bar Y_t^{\tr})|
+L|g(Z_t^{\tr})-g(\bar Y_t^{\tr})|^2.
\end{align}
Inserting  \eqref{fg+1} into \eqref{total} and using (A4)  leads to
\begin{align}\la{total2}
&\E \big|x(T\wedge \vartheta_{\tr})-Z^{\tr}(T\wedge\vartheta_{\tr})\big|^{\bar q}\nn\\
\leq&\frac{\bar qa_5}{2}\E\int_{0}^{T\wedge\vartheta_{\tr}}\big(|x(t)-Z^{\tr}(t)|^{\bar q}+\frac{1}{\tau}\int_{-\tau}^{0}|x_t(\theta)-Z^{\tr}_t(\theta)|^{\bar q}\mathrm d \theta\big)\mathrm dt\nn\\
&-\frac{\bar q}{2}\E\int_{0}^{T\wedge\vartheta_{\tr}}U(x(t),Z^{\tr}(t))\mathrm dt
+\frac{\bar q}{2}\E\int_{0}^{T\wedge\vartheta_{\tr}}
\int_{-\tau}^{0}U(x_t(\theta),Z^{\tr}_t(\theta))\rho_2(\theta)\mathrm d\theta\mathrm dt+I,
\end{align}
where
\begin{align*}
I:=&L\E\int_{0}^{T\wedge\vartheta_{\tr}}|x(t)-Z^{\tr}(t)|^{\bar q-2}\big(|x(t)-Z^{\tr}(t)||f(Z_t^{\tr})-f(\bar Y_t^{\tr})|+|g(Z_t^{\tr})-g(\bar Y_t^{\tr})|^2\big)\mathrm dt.
\end{align*}
By changing the integration order and using (A3) shows
\begin{align}\la{qterm}
&\frac{1}{\tau}\int_{0}^{T\wedge\vartheta_{\tr}}
\int_{-\tau}^{0}|x_t(\theta)-Z^{\tr}_t(\theta)|^{\bar q}\mathrm d\theta\mathrm dt
=\frac{1}{\tau}\!\int_{0}^{T\wedge\vartheta_{\tr}}
\int_{t-\tau}^{t}|x(s)-Z^{\tr}(s)|^{\bar q}\mathrm ds
\mathrm dt\nn\\
\leq&\frac{1}{\tau}\int_{-\tau}^{0}\int_{0}^{\tau+s}|x(s)-Z^{\tr}(s)|^{\bar q}\mathrm dt\mathrm ds+\frac{1}{\tau}\int_{0}^{T\wedge\vartheta_{\tr}}\int_{s}^{\tau+s}|x(s)-Z^{\tr}(s)|^{\bar q}\mathrm dt\mathrm ds\nn\\
\leq&\frac{1}{\tau}\int_{-\tau}^{0}\int_{0}^{\tau+s}|\xi(s)-
(\frac{t_{k_s+1}-s}{\tr}\xi(t_{k_s})+\frac{s-t_{k_s}}{\tr}\xi(t_{k_s+1}))
|^{\bar q}\mathrm dt\mathrm ds\nn\\
&+\frac{1}{\tau}\int_{0}^{T\wedge\vartheta_{\tr}}\int_{s}^{\tau+s}|x(s)-Z^{\tr}(s)|^{\bar q}\mathrm dt\mathrm ds\nn\\
\leq &L\tr^{\bar q/2}+\int_{0}^{T\wedge\vartheta_{\tr}}|x(s)-Z^{\tr}(s)|^{\bar q}\mathrm ds,
\end{align}
where $k_s:=\lfloor s/\tr\rfloor$.
Similarly, it follows from \eqref{calU}  and  (A3) that
\begin{align}\la{uterm}
\int_{0}^{T\wedge\vartheta_{\tr}}\int_{-\tau}^{0}
U(x_t(\theta),Z^{\tr}_t(\theta))\rho_2(\theta)\mathrm d\theta\mathrm dt
\leq L\tr^{\bar q/2}+\int_{0}^{T\wedge\vartheta_{\tr}}U(x(s),Z^{\tr}(s))\mathrm ds.
\end{align}
Inserting \eqref{qterm} and \eqref{uterm} into \eqref{total2} yields
\begin{align}\la{total3}
&\E \big|x(T\wedge \vartheta_{\tr})-Z^{\tr}(T\wedge\vartheta_{\tr})\big|^{\bar q}\nn\\
\leq& L\tr^{\bar q/2}+L\E\int_{0}^{T}|x(t\wedge\vartheta_{\tr})-Z^{\tr}(t\wedge\vartheta_{\tr})|^{\bar q}\mathrm dt+I,
\end{align}
We aim to estimate $I$.
Making use of the Young inequality we get
\begin{align}\la{Iterm2}
I
\leq&L\E\int_{0}^{T}|x(t\wedge\vartheta_{\tr})-Z^{\tr}(t\wedge\vartheta_{\tr})|^{\bar q}\mathrm dt
+L\E\int_{0}^{T}\big(|f(Z_t^{\tr})-f(\bar Y_t^{\tr})|^{\bar q}\nn\\
&+|g(Z_t^{\tr})-g(\bar Y_t^{\tr})|^{\bar q}\big)\mathrm dt.
\end{align}
By (\textup A5), the H\"{o}lder inequality, and $2\leq\bar q\leq 2p/(3r+2)$, we derive
\begin{align*}
&\E\big(|f(Z_t^{\tr})-f(\bar Y_t^{\tr})|^{\bar q}+|g(Z_t^{\tr})-g(\bar Y_t^{\tr})|^{\bar q}\big)\nn\\
\leq
&L\E\Big(\big(|Z^{\tr}_t(0)-\bar Y^{\tr}_t(0)|^{\bar q}
+\frac{1}{\tau}\int_{-\tau}^{0}|Z_t^{\tr}(\theta)-\bar Y_{t}^{\tr}(\theta)|^{\bar q}\mathrm d\theta\big)\nn\\
&\times\big(1+|Z^{\tr}(t)|^{\bar qr}+|\bar Y^{\tr}_t(0)|^{\bar qr}
+\frac{1}{\tau}\int_{-\tau}^{0}|Z_t^{\tr}(\theta)|^{\bar qr}\mathrm d\theta+\frac{1}{\tau}\int_{-\tau}^{0}|\bar Y_{t}^{\tr}(\theta)|^{\bar qr}\mathrm d\theta\big)\Big)\nn\\
\leq &L\Big(\E\big(|Z^{\tr}_t(0)-\bar Y^{\tr}_{t}(0)|^{\bar q}+\frac{1}{\tau}\!\int_{-\tau}^{0}|Z_t^{\tr}(\theta)-\bar Y_{t}^{\tr}(\theta)|^{\bar q}\mathrm d\theta\big)^{\frac{3r+2}{r+2}}\Big)^{\frac{r+2}{3r+2}}\Big(\E\big(1+|Z^{\tr}(t)|^{\bar qr}\nn\\
&+|\bar Y^{\tr}_t(0)|^{\bar qr}
+\frac{1}{\tau}\int_{-\tau}^{0}|Z^{\tr}_t(\theta)|^{\bar qr}\mathrm d\theta+\frac{1}{\tau}\int_{-\tau}^{0}|\bar Y_{t}^{\tr}(\theta)|^{\bar qr}\mathrm d\theta\big)^{\frac{3r+2}{2r}}\Big)^{\frac{2r}{3r+2}}\nn\\
\leq&L\Big(\E|Z^{\tr}_t(0)-\bar Y^{\tr}_t(0)|^{\frac{\bar q(3r+2)}{r+2}}
+\frac{1}{\tau}\int_{-\tau}^{0}\E|Z_t^{\tr}(\theta)-\bar Y_{t}^{\tr}(\theta)|^{\frac{\bar q(3r+2)}{r+2}}\mathrm d\theta\Big)^{\frac{r+2}{3r+2}} \Big(1+\E|Z^{\tr}(t)|^{p}\nn\\
&+\E|Y^{\tr}_t(0)|^{p}
+\frac{1}{\tau}\int_{-\tau}^{0}\E|Z^{\tr}_t(\theta)|^{p}\mathrm d\theta+\frac{1}{\tau}\int_{-\tau}^{0}\E|\bar Y_{t}^{\tr}(\theta)|^{p}\mathrm d\theta\Big) ^{\frac{2r}{3r+2}}.
\end{align*}
Using  Theorem \ref{oth2}, Lemma \ref{bounded}, and Lemma \ref{ztyt},  we derive
\begin{align*}
&\E\big(|f(Z_t^{\tr})-f(\bar Y_t^{\tr})|^{\bar q}
+|g(Z_t^{\tr})-g(\bar  Y_t^{\tr})|^{\bar q}\big)\nn\\
\leq& L\Big(\big(\E|Z^{\tr}_t(0)-\bar Y^{\tr}_t(0)|^{\frac{2p}{r+2}}\big)^\frac{\bar q(3r+2)}{2p}
+\frac{1}{\tau}\int_{-\tau}^{0}\big(\E|Z_t^{\tr}(\theta)-\bar Y_{t}^{\tr}(\theta)|^{\frac{2p}{r+2}}\big)^{\frac{\bar q(3r+2)}{2p}}\mathrm d\theta\Big)^{\frac{r+2}{3r+2}}\nn\\
\leq& L\tr^{\frac{\bar q}{2}}.
\end{align*}
Together with \eqref{Iterm2} we arrive at
\begin{align}\la{Iterm3}
I\leq L\E\int_{0}^{T}|x(t\wedge\vartheta_{\tr})-Z^{\tr}(t\wedge\vartheta_{\tr})|^{\bar q}\mathrm dt+L\tr^{\frac{\bar q}{2}}.
\end{align}
Inserting \eqref{Iterm3} into \eqref{total3}  and
\begin{align*}
\E \big|x(T\wedge \vartheta_{\tr})-Z^{\tr}(T\wedge\vartheta_{\tr})\big|^{\bar q}
\leq L\tr^{\frac{\bar q}{2}}+ L\E\int_{0}^{T}|x(t\wedge \vartheta_{\tr})-Z^{\tr}(t\wedge\vartheta_{\tr})|^{\bar q}\mathrm dt.
\end{align*}
The required assertion \eqref{error+} follows by applying the Gronwall inequality. 
\end{proof}

\begin{theorem}\la{thrate}
Assume that $\textup{(A2)}$--$(\textup A5)$  hold with $$3r+2\leq p
~~\hbox{and}~~
\bar q\in[2,\frac{2p}{3r+2}]\cap[2,\bar p).$$ Then for any  $\tr\in(0,\bar\tr]$ and $T>0$,
  \begin{align*}
  \E\big|x(T)-Y^{\tr}(T)\big|^{\bar q}
   \leq L_5\tr^{\frac{\bar q}{2} },
  \end{align*}
  where the constant $L_5$ depends on $T$ but is independent of  $\tr$.
 \end{theorem}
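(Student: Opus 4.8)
The plan is to route through the auxiliary continuous process $Z^{\tr}$ of \eqref{o3.27}, for which the $1/2$-rate is already available from Lemma~\ref{rateone}, and to show that the piecewise-constant numerical output $Y^{\tr}(T)$ deviates from $Z^{\tr}(T)$ by only $O(\tr^{\bar q/2})$ in $L^{\bar q}$. First I would fix $T>0$, set $k=\lfloor T/\tr\rfloor$ so that $T\in[t_k,t_{k+1})$, and note that by the definition \eqref{o3.8} of $Y^{\tr}(\cdot)$ together with the identity $Z^{\tr}(t_k)=Y^{\tr}(t_k)$ recorded right after \eqref{o3.27}, we have $Y^{\tr}(T)=Y^{\tr}(t_k)=Z^{\tr}(t_k)$. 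Hence $x(T)-Y^{\tr}(T)=\big(x(T)-Z^{\tr}(T)\big)+\big(Z^{\tr}(T)-Z^{\tr}(t_k)\big)$, and by $|a+b|^{\bar q}\le 2^{\bar q-1}(|a|^{\bar q}+|b|^{\bar q})$ it suffices to estimate the two pieces separately.

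The first piece is handled directly by Lemma~\ref{rateone}, which under exactly the present hypotheses gives $\E|x(T)-Z^{\tr}(T)|^{\bar q}\le L_4\tr^{\bar q/2}$. For the second piece, the first line of \eqref{o3.27} yields $Z^{\tr}(T)-Z^{\tr}(t_k)=f(\bar Y^{\tr}_{t_k})(T-t_k)+g(\bar Y^{\tr}_{t_k})\big(B(T)-B(t_k)\big)$, where $0\le T-t_k\le\tr$ and $\bar Y^{\tr}_{t_k}$ is $\mathcal{F}_{t_k}$-measurable. I would then bound $\E|f(\bar Y^{\tr}_{t_k})|^{\bar q}$ and $\E|g(\bar Y^{\tr}_{t_k})|^{\bar q}$ by a constant independent of $\tr$: taking $\bar\phi=\mathbf{0}$ in (A5) and using the Young and H\"{o}lder inequalities gives $|f(\bar Y^{\tr}_{t_k})|^{\bar q}\le L\big(1+|Y^{\tr}(t_k)|^{\bar q(r+1)}+\frac{1}{\tau}\int_{-\tau}^{0}|\bar Y^{\tr}_{t_k}(\theta)|^{\bar q(r+1)}\mathrm d\theta\big)$ and, as in the derivation of \eqref{o3.53}, $|g(\bar Y^{\tr}_{t_k})|^{\bar q}\le L\big(1+|Y^{\tr}(t_k)|^{\bar q(r+2)/2}+\frac{1}{\tau}\int_{-\tau}^{0}|\bar Y^{\tr}_{t_k}(\theta)|^{\bar q(r+2)/2}\mathrm d\theta\big)$. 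Since $3r+2\le p$ and $\bar q\le 2p/(3r+2)$ force both $\bar q(r+1)\le p$ and $\bar q(r+2)/2\le p$, the uniform nodal bound \eqref{+o3.9} together with the convexity estimate for the interpolation $\bar Y^{\tr}_{t_k}$ used in \eqref{o3.26} yields $\E|f(\bar Y^{\tr}_{t_k})|^{\bar q}\vee\E|g(\bar Y^{\tr}_{t_k})|^{\bar q}\le L$. Conditioning on $\mathcal{F}_{t_k}$ and using that $B(T)-B(t_k)$ is independent of $\mathcal{F}_{t_k}$ with $\E|B(T)-B(t_k)|^{\bar q}\le L(T-t_k)^{\bar q/2}\le L\tr^{\bar q/2}$, I obtain $\E|Z^{\tr}(T)-Z^{\tr}(t_k)|^{\bar q}\le L\tr^{\bar q}+L\tr^{\bar q/2}\le L\tr^{\bar q/2}$. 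Combining the two estimates gives the assertion with $L_5$ depending only on $T$.

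The step demanding care is the coefficient bound above: one should \emph{not} estimate $f(\bar Y^{\tr}_{t_k})$ and $g(\bar Y^{\tr}_{t_k})$ through the truncation inequalities \eqref{o3.6}--\eqref{o3.6+}, since these carry the factors $\tr^{-2\lambda}$ and $\tr^{-\lambda}$; in particular the $g$-contribution would then only be $O(\tr^{\bar q(1-\lambda)/2})$, which is strictly weaker than the claimed $O(\tr^{\bar q/2})$ whenever $\lambda>0$. The correct route is to use the polynomial growth encoded in (A5) to produce $\tr$-free moment bounds on the coefficients and let the Brownian increment alone supply the factor $\tr^{1/2}$. Apart from this bookkeeping, the proof is a short assembly of Lemma~\ref{rateone} and the already-established uniform $L^p$-boundedness of the numerical solution.
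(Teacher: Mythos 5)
Your proof is correct and follows essentially the same route as the paper: the paper's two-line argument splits $x(T)-Y^{\tr}(T)$ into $x(T)-Z^{\tr}(T)$ (handled by Lemma~\ref{rateone}) plus $Z^{\tr}_T(0)-\bar Y^{\tr}_T(0)$ (handled by the $\theta=0$ case of Lemma~\ref{ztyt}), which is exactly your decomposition since $Y^{\tr}(T)=Z^{\tr}(t_k)=\bar Y^{\tr}_T(0)$. Your only deviation is that you re-derive the one-step remainder bound inline with exponent $\bar q$ instead of citing Lemma~\ref{ztyt} and interpolating from the exponent $2p/(r+2)$; your cautionary remark about avoiding \eqref{o3.6+} for $g$ matches what the paper itself does via \eqref{o3.53}, though note the truncation bound \eqref{o3.6} for the $f$-term would in fact have sufficed, since that term carries the full factor $\tr^{\bar q}$ and $\lambda<1/2$.
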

\begin{proof}\textbf{Proof.}
By virtue of  Lemma \ref{ztyt} and Lemma \ref{rateone} yields
\begin{align*}
  &\E|x(T)-Y^{\tr}(T)|^{\bar q}\nn\\
   \leq&L\E|x(T)-Z^{\tr}(T)|^{\bar q} +  L\E|Z^{\tr}_T(0)-\bar Y_T^{\tr}(0)|^{\bar q}\leq L_5 \tr^{\frac{\bar q}{2} },
\end{align*}
where the constant $L_5$ depends on $T$ but is independent of  $\tr$.
\end{proof}
\begin{remark}
In fact, the results  on the convergence and convergence rate
are still hold as $a_2=a_3=0$ in \textup{(A2)}.
\end{remark}
\section{Exponential stability}\la{ES}
In this section, we focus on approximating the exponential stability of SFDE \eqref{o1.1}.
We begin with the exponential stability
of the exact
solutions in $L^p$ and $\PP-1$. Then we approximate the long-time behaviors by the truncated numerical solutions.
Without loss of generality  we assume that $f(\mathbf{0})=0,~g(\mathbf{0})=0 $, and give the following hypothesis. 

\textbf{(A2$'$).}
 There exist positive  constants $p,~\varrho,~b_{i}~(1\leq i\leq4)$ satisfying $p\geq2$, ~$b_{1}>b_{2}, ~b_{3}>b_{4}$, and functions $\rho_3(\cdot), ~\rho_4(\cdot)\in \mathcal W$ such that for any $\phi\in \mathcal C$,
\begin{align}\label{A22}
&2(\phi(0))^T f(\phi)+(p-1)|g(\phi)|^2\nn\\
\leq& -b_{1}|\phi(0)|^{2}+b_{2}\int_{-\tau}^{0}|\phi(\theta)|^{2}\rho_3(\theta)\mathrm d\theta -b_{3}|\phi(0)|^{2+\varrho}+b_{4}\int_{-\tau}^{0}|\phi(\theta)|^{2+\varrho}\rho_4(\theta)\mathrm d\theta.
\end{align}

Owing to  $b_1>b_2$ and $b_3>b_4$ in $(\textup{A}2')$ , we can fix a positive constant $\nu$ such that
\begin{align}\la{o4.5}
\frac{p}{2}(b_{1}-b_{2}e^{\nu\tau})-\nu>0~~\hbox{and}~~
b_{3}-b_{4}e^{\nu\tau}>0.
\end{align}

By similar arguments in the proof of \cite[Theorem 3 and Theorem 4]{LMS2011} we give stability results of  the exact solutions.
\begin{theorem}\la{th2}
 Assume that $(\textup{A}1)$ and $(\textup{A}2')$ hold.
Then  the solution $x(t)$  to SFDE \eqref{o1.1} with the initial data $\xi\in \mathcal C$ satisfies
   \begin{align*}
   &\limsup_{t\rightarrow \infty}\frac{1}{t}\log(\E|x(t)|^{p})\leq -\nu,\nn\\
   &\limsup_{t\rightarrow \infty}\frac{1}{t}\log|x(t)| \leq - \frac{\nu}{p} ,~~~\hbox{a.s.}
   \end{align*}
  \end{theorem}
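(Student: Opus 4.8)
The plan is to run the classical Lyapunov-functional argument with the exponential weight $e^{\nu t}$, in the spirit of \cite[Theorems~3 and~4]{LMS2011}, while keeping careful track of the coefficients in the delay integrals so that the two inequalities in \eqref{o4.5} turn out to be exactly what the computation needs. Throughout I would take $V(x)=|x|^{p}$; since $p\ge 2$ one has $\mathrm{trace}\bigl(g^{T}(\phi)V_{xx}(\phi(0))g(\phi)\bigr)\le p(p-1)|\phi(0)|^{p-2}|g(\phi)|^{2}$, so that $\mathcal{L}V(\phi)\le\frac{p}{2}|\phi(0)|^{p-2}\bigl(2(\phi(0))^{T}f(\phi)+(p-1)|g(\phi)|^{2}\bigr)$.

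The first step is a pointwise functional inequality. Inserting $(\textup{A}2')$ into the bound above and applying the Young inequality $|\phi(0)|^{p-2}|\phi(\theta)|^{q}\le\frac{p-2}{p-2+q}|\phi(0)|^{p-2+q}+\frac{q}{p-2+q}|\phi(\theta)|^{p-2+q}$ with $q=2$ and $q=2+\varrho$, together with Jensen's inequality for the densities $\rho_{3},\rho_{4}\in\mathcal{W}$, yields positive constants $c_{i}$ with
\[
\nu|\phi(0)|^{p}+\mathcal{L}V(\phi)\le-c_{1}|\phi(0)|^{p}+c_{2}\int_{-\tau}^{0}|\phi(\theta)|^{p}\rho_{3}(\theta)\mathrm d\theta-c_{3}|\phi(0)|^{p+\varrho}+c_{4}\int_{-\tau}^{0}|\phi(\theta)|^{p+\varrho}\rho_{4}(\theta)\mathrm d\theta,
\]
where $c_{1}=\frac{p}{2}b_{1}-\nu-\frac{p-2}{2}b_{2}$, $c_{2}=b_{2}$, $c_{3}=\frac{p}{2}\bigl(b_{3}-\frac{p-2}{p+\varrho}b_{4}\bigr)$ and $c_{4}=\frac{p(2+\varrho)}{2(p+\varrho)}b_{4}$. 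I expect the delicate point — the main obstacle — to be verifying that $c_{1}\ge c_{2}e^{\nu\tau}$ and $c_{3}\ge c_{4}e^{\nu\tau}$ hold \emph{at the same time}: after cancelling $b_{2}$, resp.\ $b_{4}$, both reduce to the elementary fact $(p-2)e^{\nu\tau}\ge p-2$, which, combined with the two parts of \eqref{o4.5}, closes the two gaps precisely. Everything below is then routine.

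Next comes the $L^{p}$ decay. Note first that $(\textup{A}2')$ implies $(\textup{A}2)$ — bound $\int_{-\tau}^{0}|\phi(\theta)|^{2}\rho_{3}(\theta)\mathrm d\theta$ by $\|\rho_{3}\|_{\infty}\int_{-\tau}^{0}|\phi(\theta)|^{2}\mathrm d\theta$ and discard $-b_{1}|\phi(0)|^{2}$ — so Theorem \ref{oth1} applies and $x(\cdot)$ is global with $\sup_{t}\E|x(t)|^{p}<\infty$. I would apply It\^o's formula to $e^{\nu s}V(x(s))$ on $[0,t\wedge\delta_{m}]$, with $\delta_{m}$ as in \eqref{o2.8} (on $[0,\delta_{m}]$ the diffusion coefficient is bounded, so the stochastic integral is a genuine martingale), take expectations, insert the functional inequality, and change the order of integration in the two delay terms via $u=s+\theta$ (using $e^{-\nu\theta}\le e^{\nu\tau}$ and $\int_{-\tau}^{0}\rho_{i}=1$). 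Those delay integrals then combine with $-c_{1}\E\int_{0}^{t\wedge\delta_{m}}e^{\nu u}|x(u)|^{p}\mathrm du$ and $-c_{3}\E\int_{0}^{t\wedge\delta_{m}}e^{\nu u}|x(u)|^{p+\varrho}\mathrm du$, and by the sign conditions the net contribution is non-positive, leaving $e^{\nu(t\wedge\delta_{m})}\E|x(t\wedge\delta_{m})|^{p}\le K$ for a constant $K$ depending only on $\xi$. Letting $m\to\infty$ with Fatou's lemma gives $\E|x(t)|^{p}\le Ke^{-\nu t}$, hence $\limsup_{t\to\infty}\frac{1}{t}\log\E|x(t)|^{p}\le-\nu$.

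For the almost sure bound I would repeat the computation pathwise, without expectations: $e^{\nu t}V(x(t))=|\xi(0)|^{p}+\int_{0}^{t}e^{\nu s}[\nu V+\mathcal{L}V](x_{s})\mathrm ds+M(t)$ with $M(t)=\int_{0}^{t}e^{\nu s}V_{x}(x(s))g(x_{s})\mathrm dB(s)$ a continuous local martingale. The same change-of-order bound shows $\int_{0}^{t}e^{\nu s}[\nu V+\mathcal{L}V](x_{s})\mathrm ds\le K_{0}$ for every $t\ge 0$ (the diagonal terms being non-positive for each fixed $t$), so $0\le e^{\nu t}|x(t)|^{p}\le K_{1}+M(t)$, i.e.\ $M(t)+K_{1}\ge 0$. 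A non-negative continuous local martingale is a supermartingale and therefore converges a.s.\ to a finite limit, so $\limsup_{t\to\infty}e^{\nu t}|x(t)|^{p}<\infty$ a.s.; taking logarithms and dividing by $t$ gives $\limsup_{t\to\infty}\frac{1}{t}\log|x(t)|\le-\frac{\nu}{p}$ a.s. Apart from the coefficient bookkeeping of the first step, every ingredient is standard.
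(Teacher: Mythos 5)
Your argument is correct and coincides with the paper's: the paper proves this theorem only by appealing to ``similar arguments in the proof of \cite[Theorem 3 and Theorem 4]{LMS2011}'', and your proposal carries out exactly that Lyapunov argument with the weight $e^{\nu t}$, including the verification that \eqref{o4.5} yields $c_1\ge c_2e^{\nu\tau}$ and $c_3\ge c_4e^{\nu\tau}$ simultaneously and the nonnegative-local-martingale (supermartingale convergence) step for the almost sure bound. The only cosmetic slip is writing $e^{\nu(t\wedge\delta_m)}\E|x(t\wedge\delta_m)|^p\le K$ with the exponential outside the expectation; it should read $\E\bigl(e^{\nu(t\wedge\delta_m)}|x(t\wedge\delta_m)|^p\bigr)\le K$, after which Fatou's lemma applies as you say.
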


Now we start with  the stability analysis of the numerical solutions.
Due to \eqref{o4.5} we can fix a constant  $\e_1\in(0,1)$ sufficiently small such that
\begin{align}\la{se4+2}
&\frac{p}{2}\Big((1-\e_1)^{\frac{p}{2}}b_{1}-b_{2}e^{\nu\tau}\Big)
-\nu(1+\e_1e^{\nu\tau})\geq0,\nn\\
&(1-\e_1)^{\frac{p}{2}}b_{3}-b_{4}e^{\nu\tau}\geq0.
\end{align}
 For any $\phi\in \mathcal C$, define
\begin{align*}
\Phi_1(\phi):=(1-\e_1)|\phi(0)|^2
+\frac{\e_1}{\tau}\int_{-\tau}^{0}|\phi(\theta)|^2\mathrm{d}\theta.
\end{align*}
In the similar ways as  \eqref{o3.6} and \eqref{o3.6+} were shown, we get that for any  $k\in\mathbb N$,
 \begin{align}\la{o4.10}
 &\big|f(\bar Y_{t_k}^{\tr})\big|^2
\leq L\tr^{-2\lambda}\Phi_1(\bar Y_{t_k}^{\tr}),
\end{align}
\begin{align}\la{o4.10+}
 &\big|g(\bar Y^{\tr}_{t_k})\big|^2
\leq L\tr^{-\lambda}\Phi_1(\bar Y_{t_k}^{\tr}).
\end{align}
The following inequality  plays a key role in the proof of the stability of the numerical solutions.
 \begin{lemma}\label{+r2}
 Assume that  $(\textup{A}2')$ holds.
  Then for any $\phi\in \mathcal C$,
\begin{align}\la{re2+3}
&\nu\big(\Phi_1(\phi)\big)^{\frac{p}{2}}+\frac{(1- \e_1) p}{2}\big(\Phi_1(\phi)\big)^{\frac{p-2}{2}}\big(2(\phi(0))^T f(\phi)+(p-1)|g(\phi)|^2\big)\nn\\
\leq&-\beta_1|\phi(0)|^{ p}+\frac{\beta_2}{\tau}\int_{-\tau}^{0}|\phi(\theta)|^{p}\mathrm d\theta+\beta_3\int_{-\tau}^{0}|\phi(\theta)|^{ p}\rho_3(\theta)\mathrm d\theta\nn\\
&-\beta_{4}|\phi(0)|^{ p+\varrho}+\frac{\beta_5}{\tau}\int_{-\tau}^{0}|\phi(\theta)|^{ p+\varrho}\mathrm d\theta+\beta_6\int_{-\tau}^{0}|\phi(\theta)|^{ p+\varrho}\rho_4(\theta)\mathrm d\theta,
\end{align}
where  $\beta_{i}~(1\leq i\leq6)$ are positive constants  given by \eqref{be},
and satisfy
$$e^{\nu\tau}(\b_2+\b_3)<\b_1~~\hbox{and}~~ e^{\nu\tau}(\b_5+\b_6)<\b_4.$$
 \end{lemma}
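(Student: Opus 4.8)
The plan is to follow the proof of Lemma~\ref{+r} almost verbatim, with $\Phi_1$ in place of $\Phi$; since $\Phi_1$ carries no additive constant the argument is in fact a little cleaner, as no auxiliary $L$ must be tracked and the elementary inequality of \cite[p.211, Lemma~4.1]{Mao2007} can be replaced by plain convexity. Throughout we may assume $\phi\not\equiv\mathbf 0$ (for $\phi\equiv\mathbf 0$ both sides of \eqref{re2+3} vanish, using $f(\mathbf 0)=g(\mathbf 0)=0$), so that $\Phi_1(\phi)>0$ and all the powers below are well defined.

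The first step is to multiply the right-hand side of \eqref{A22} by the nonnegative factor $\frac{(1-\e_1)p}{2}\big(\Phi_1(\phi)\big)^{(p-2)/2}$ and to bound the four resulting summands. For the two dissipative terms one uses $\Phi_1(\phi)\ge(1-\e_1)|\phi(0)|^2$, which yields $-\frac{p}{2}b_1(1-\e_1)^{p/2}|\phi(0)|^p$ and $-\frac{p}{2}b_3(1-\e_1)^{p/2}|\phi(0)|^{p+\varrho}$. For the two delay terms one applies Young's inequality to peel off one power of $\Phi_1(\phi)$ — with conjugate exponents $\frac{p}{p-2},\frac{p}{2}$ for the $\rho_3$-term and $\frac{p+\varrho}{p-2},\frac{p+\varrho}{2+\varrho}$ for the $\rho_4$-term (the case $p=2$ is trivial, since then $(\Phi_1(\phi))^{(p-2)/2}\equiv1$) — followed by Jensen's inequality against the probability densities $\rho_3$ and $\rho_4$, so that $\big(\int_{-\tau}^{0}|\phi(\theta)|^2\rho_3(\theta)\mathrm d\theta\big)^{p/2}$ is replaced by $\int_{-\tau}^{0}|\phi(\theta)|^p\rho_3(\theta)\mathrm d\theta$, and similarly for the $\rho_4$-term. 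This reproduces the $\rho_3$- and $\rho_4$-integrals on the right of \eqref{re2+3} and leaves behind two spare terms, a multiple of $\big(\Phi_1(\phi)\big)^{p/2}$ and a multiple of $\big(\Phi_1(\phi)\big)^{(p+\varrho)/2}$.

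It then remains to absorb these two spare terms together with the original $\nu\big(\Phi_1(\phi)\big)^{p/2}$. Since $\Phi_1(\phi)=(1-\e_1)|\phi(0)|^2+\e_1\big(\frac1\tau\int_{-\tau}^{0}|\phi(\theta)|^2\mathrm d\theta\big)$ is a convex combination, two applications of Jensen's inequality give
\[
\big(\Phi_1(\phi)\big)^{q}\le(1-\e_1)|\phi(0)|^{2q}+\frac{\e_1}{\tau}\int_{-\tau}^{0}|\phi(\theta)|^{2q}\mathrm d\theta
\]
for $q=p/2$ and $q=(p+\varrho)/2$. Collecting the coefficients of $|\phi(0)|^{p}$, $\frac1\tau\int_{-\tau}^{0}|\phi(\theta)|^{p}\mathrm d\theta$, $\int_{-\tau}^{0}|\phi(\theta)|^{p}\rho_3(\theta)\mathrm d\theta$ and of the three analogous $(p+\varrho)$-quantities then produces \eqref{re2+3} with $\beta_1,\dots,\beta_6$ read off as explicit expressions in $p,\varrho,b_1,\dots,b_4,\nu,\e_1$ — this is the content of the forthcoming display \eqref{be}.

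The only genuine obstacle is the closing bookkeeping: verifying that the six constants are positive and that $e^{\nu\tau}(\beta_2+\beta_3)<\beta_1$ and $e^{\nu\tau}(\beta_5+\beta_6)<\beta_4$. Positivity of $\beta_2,\beta_3,\beta_5,\beta_6$ is immediate, and positivity of $\beta_1,\beta_4$ follows from \eqref{se4+2}. For the two domination inequalities one substitutes the closed forms, invokes the corresponding inequality of \eqref{se4+2} to bound the $b_1$- (respectively $b_3$-) term from below, and is left with a nonnegative remainder, namely $\frac{(p-2)b_2(1-\e_1)}{2}\big(e^{\nu\tau}-1\big)+\nu\e_1$ for the first and $\frac{p(p-2)b_4(1-\e_1)}{2(p+\varrho)}\big(e^{\nu\tau}-1\big)$ for the second. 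What matters is that these remainders be strictly positive: since $\nu>0$ and $\tau>0$ one has $e^{\nu\tau}>1$, so the first remainder is strictly positive for every $p\ge2$; the second vanishes when $p=2$, and in that borderline case strictness is supplied instead by the strict inequality $b_3-b_4e^{\nu\tau}>0$ of \eqref{o4.5}, which, for $\e_1$ chosen sufficiently small, forces the second inequality of \eqref{se4+2} to hold strictly. Assembling these estimates yields \eqref{re2+3} and completes the proof.
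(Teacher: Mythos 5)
Your proposal is correct and follows essentially the same route as the paper's proof: multiply \eqref{A22} by $\frac{(1-\e_1)p}{2}(\Phi_1(\phi))^{(p-2)/2}$, bound the dissipative terms via $\Phi_1(\phi)\ge(1-\e_1)|\phi(0)|^2$, peel off the delay terms with Young and Jensen, expand $(\Phi_1(\phi))^{q}$ by convexity, and read off $\beta_1,\dots,\beta_6$; your computed remainders $\frac{(p-2)b_2(1-\e_1)}{2}(e^{\nu\tau}-1)+\nu\e_1$ and $\frac{p(p-2)b_4(1-\e_1)}{2(p+\varrho)}(e^{\nu\tau}-1)$ agree with what \eqref{be} and \eqref{se4+2} give. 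Your extra observation about strictness when $p=2$ (resolved by taking $\e_1$ small so that the second inequality of \eqref{se4+2} is strict, using $b_3>b_4e^{\nu\tau}$ from \eqref{o4.5}) is a point the paper glosses over, but it does not change the argument.
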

\begin{proof}\textbf{Proof.}
For any $\phi\in \mathcal C$, making use of \eqref{A22} and the Young inequality leads to
\begin{align}\la{l4.1+1}
 &\big(\Phi_1(\phi)\big)^{\frac{p-2}{2}}\big(2(\phi(0))^T f(\phi)+(p-1)|g(\phi)|^2\big)\nn\\
\leq&-(1-\e_1)^{\frac{p-2}{2}}b_1|\phi(0)|^{ p}+b_2\big(\Phi_1(\phi)\big)^{\frac{ p-2}{2}}\int_{-\tau}^{0}|\phi(\theta)|^{2}\rho_3(\theta)\mathrm d\theta\nn\\
&-(1-\e_1)^{\frac{p-2}{2}}b_3|\phi(0)|^{ p+\varrho}+b_4\big(\Phi_1(\phi)\big)^{\frac{ p-2}{2}}\int_{-\tau}^{0}|\phi(\theta)|^{2+\varrho}\rho_4(\theta)\mathrm d\theta\nn\\
\leq&-(1-\e_1)^{\frac{p-2}{2}}b_1|\phi(0)|^{p}+\frac{(p-2)b_2}{ p}\big(\Phi_1(\phi)\big)^{\frac{p}{2}}+\frac{2b_2}{p}\big(\int_{-\tau}^{0}|\phi(\theta)|^{2}\rho_3(\theta)\mathrm d\theta\big)^{\frac{p}{2}}\nn\\
&-(1-\e_1)^{\frac{p-2}{2}}b_3|\phi(0)|^{p+\varrho}+\frac{(p-2)b_4}{ p+\varrho}\big(\Phi_1(\phi)\big)^{\frac{p+\varrho}{2}}\nn\\
&+\frac{(2+\varrho) b_4}{ p+\varrho}\big(\int_{-\tau}^{0}|\phi(\theta)|^{2+\varrho}\rho_4(\theta)\mathrm d\theta\big)^{\frac{p+\varrho}{2+\varrho}}.
\end{align}
Applying the H\"{o}lder inequality yields
\begin{align}\la{l4.1+2}
&\big(\int_{-\tau}^{0}|\phi(\theta)|^{2}\rho_3(\theta)\mathrm d\theta\big)^{\frac{p}{2}}\leq \int_{-\tau}^{0}|\phi(\theta)|^{p}\rho_3(\theta)\mathrm d\theta\nn\\
&\big(\int_{-\tau}^{0}|\phi(\theta)|^{2+\varrho}\rho_4(\theta)\mathrm d\theta\big)^{\frac{ p+\varrho}{2+\varrho}}\leq\int_{-\tau}^{0}|\phi(\theta)|^{ p+\varrho}\rho_4(\theta)\mathrm d\theta.
\end{align}
Combining \eqref{l4.1+1} and \eqref{l4.1+2} we derive
\begin{align}\la{l4.1+4}
&\nu\big(\Phi_1(\phi)\big)^{\frac{p}{2}}+\frac{(1- \e_1) p}{2}\big(\Phi_1(\phi)\big)^{\frac{p-2}{2}}\big(2(\phi(0))^T f(\phi)+(p-1)|g(\phi)|^2\big)\nn\\
\leq&-\frac{(1-\e_1)^{\frac{p}{2}}pb_1}{2}|\phi(0)|^{p}
+(\nu+\frac{(p-2)b_2}{2})\big(\Phi_1(\phi)\big)^{\frac{p}{2}}\nn\\
&+b_2\int_{-\tau}^{0}|\phi(\theta)|^{p}\rho_3(\theta)\mathrm d\theta-\frac{(1-\e_1)^{\frac{p}{2}}pb_3}{2}|\phi(0)|^{p+\varrho}\nn\\
&+\frac{(p-2)pb_4}{2(p+\varrho)}
\big(\Phi_1(\phi)\big)^{\frac{p+\varrho}{2}}+\frac{(2+\varrho)pb_4}{2(p+\varrho)}
\int_{-\tau}^{0}|\phi(\theta)|^{p+\varrho}\rho_4(\theta)\mathrm d\theta,
\end{align}
where the positive constant $\nu$ is given by \eqref{o4.5}.
Using the convex property of $u(x)=x^{a}~(a>1)$  and the H\"{o}lder inequality yields
\begin{align}\la{l4.1+3}
\big(\Phi_1(\phi)\big)^{\frac{p}{2}}
\leq&(1-\e_1)|\phi(0)|^{p}+\e_1(\frac{1}{\tau}\int_{-\tau}^{0}|\phi(\theta)|^{2}\mathrm d\theta)^{\frac{p}{2}}\nn\\
\leq&(1-\e_1)|\phi(0)|^{p}+\frac{\e_1}{\tau}\int_{-\tau}^{0}|\phi(\theta)|^{ p}\mathrm d\theta,\nn\\
\big(\Phi_1(\phi)\big)^{\frac{p+\varrho}{2}}
\leq&(1-\e_1)|\phi(0)|^{p+\varrho}
+\frac{\e_1}{\tau}\int_{-\tau}^{0}|\phi(\theta)|^{p+\varrho}\mathrm d\theta.
\end{align}
Inserting  \eqref{l4.1+3} into \eqref{l4.1+4}  yields
\begin{align*}
&\nu\big(\Phi_1(\phi)\big)^{\frac{p}{2}}+\frac{(1- \e_1) p}{2}\big(\Phi_1(\phi)\big)^{\frac{p-2}{2}}\big(2(\phi(0))^T f(\phi)+(p-1)|g(\phi)|^2\big)\nn\\
\leq &-\b_1|\phi(0)|^{ p}+\frac{\b_2}{\tau}\int_{-\tau}^{0}|\phi(\theta)|^{p}\mathrm d\theta+\b_3\int_{-\tau}^{0}|\phi(\theta)|^{ p}\rho_3(\theta)\mathrm d\theta\nn\\
&-\b_4|\phi(0)|^{p+\varrho}+\frac{\b_5}{\tau}\int_{-\tau}^{0}|\phi(\theta)|^{p+\varrho}\mathrm d\theta+\b_6\int_{-\tau}^{0}|\phi(\theta)|^{ p+\varrho}\rho_4(\theta)\mathrm d\theta,
\end{align*}
where
\begin{align}\la{be}
&\b_1:=\frac{(1-\e_1)^{\frac{p}{2}} pb_1}{2}-(1-\e_1)(\nu+\frac{(p-2)b_2}{2}),\nn\\
&\b_2:=\e_1(\nu+\frac{( p-2)b_2}{2}),~~~\b_3:=b_2,\nn\\
&\b_4:=\frac{p}{2}\big((1-\e_1)^{\frac{p}{2}}b_3-\frac{(1-\e_1)(p-2)b_4}{p+\varrho}\big),\nn\\
&\b_5:=\frac{(p-2)pb_4\e_1}{2( p+\varrho)},~~~\b_6:=\frac{(2+\varrho)pb_4}{2( p+\varrho)}.
 \end{align}
It follows from \eqref{se4+2} that
$$e^{\nu\tau}(\b_2+\b_3)<\b_1~~\hbox{and}~~ e^{\nu\tau}(\b_5+\b_6)<\b_4.$$
The proof is complete.
\end{proof}
 \begin{theorem}\la{th6}
Assume that $(\textup{A}1)$ and  $(\textup{A}2')$ hold. Then for any  $\nu_1\in(0,\nu)$, there exists a $\hat\tr\in(0,1]$ such that for any $\tr\in(0,\hat \tr]$,  
 \be\la{o4.11+1}
   \limsup_{k\rightarrow\infty}\frac{1}{k\tr}\log(\E|Y^{\tr}(t_k)|^{p}) \leq -\nu_1,
   \ee
  \begin{align}\la{o4.11++1}
   \limsup_{k\rightarrow\infty}\frac{1}{k\tr}\log{|Y^{\tr}(t_k)|} \leq- \frac{\nu_1}{p}, ~\hbox{a.s.}
   \end{align}

  \end{theorem}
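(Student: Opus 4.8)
The plan is to rerun the argument of Theorem~\ref{oth2}, with the functional $\Phi$ replaced by $\Phi_1$, the constant $\e_0$ replaced by $\e_1$, the weight $e^{\sigma t_k}$ replaced by $e^{\nu_1 t_k}$ for an arbitrary $\nu_1\in(0,\nu)$, and the dissipativity inequality \eqref{re1+4} replaced by \eqref{re2+3}. First I would establish a one-step recursion. Arguing exactly as in \eqref{o3.10}--\eqref{o3.13} with $\e_1$ in place of $\e_0$ (if $\Phi_1(\bar Y^{\tr}_{t_k})=0$ for some $k$ then $f(\mathbf 0)=g(\mathbf 0)=0$ forces $Y^{\tr}(t_\ell)=\mathbf 0$ for all $\ell\ge k$ and the assertions are trivial, so we may assume $\Phi_1(\bar Y^{\tr}_{t_k})>0$), one gets $\Phi_1(\bar Y^{\tr}_{t_{k+1}})\le\Phi_1(\bar Y^{\tr}_{t_k})(1+\widetilde\Theta_k)$ with $\widetilde\Theta_k$ the obvious analogue of $\Theta_k$ and $\widetilde\Theta_k>-1$ a.s.; applying \cite[Lemma~3.3]{yang2018} and the analogues of \eqref{o3.22}--\eqref{o3.24} in Lemma~\ref{esti} (which hold verbatim because \eqref{o4.10}--\eqref{o4.10+} play the role of \eqref{o3.6}--\eqref{o3.6+}) yields, as in \eqref{cnex},
\begin{align*}
\E\big((\Phi_1(\bar Y^{\tr}_{t_{i+1}}))^{\frac{p}{2}}\,\big|\,\mathcal{F}_{t_i}\big)
\le (1+L\tr)\big(\Phi_1(\bar Y^{\tr}_{t_i})\big)^{\frac{p}{2}}
+\frac{(1-\e_1)p\tr}{2}\big(\Phi_1(\bar Y^{\tr}_{t_i})\big)^{\frac{p-2}{2}}\big(2(Y^{\tr}(t_i))^T f(\bar Y^{\tr}_{t_i})+(p-1)|g(\bar Y^{\tr}_{t_i})|^2\big).
\end{align*}

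Next I would fix $\hat\tr:=1\wedge\frac{\nu-\nu_1}{L}$, with $L$ the constant above, so that $\nu_1+L\tr\le\nu$ for $\tr\in(0,\hat\tr]$. Multiplying the one-step bound by $e^{\nu_1 t_{i+1}}$ and using $e^{\nu_1 t_{i+1}}-e^{\nu_1 t_i}\le\nu_1\tr\,e^{\nu_1 t_{i+1}}$, the error factor $(1+L\tr)$ produces a coefficient $(\nu_1+L\tr)\big(\Phi_1(\bar Y^{\tr}_{t_i})\big)^{p/2}$, which together with $\frac{(1-\e_1)p}{2}\big(\Phi_1(\bar Y^{\tr}_{t_i})\big)^{(p-2)/2}(\,\cdots\,)$ is, by $\nu_1+L\tr\le\nu$ and Lemma~\ref{+r2}, bounded above by the right-hand side of \eqref{re2+3} evaluated at $\phi=\bar Y^{\tr}_{t_i}$. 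Taking expectations, summing over $i=0,\dots,k$, and processing the delayed integrals $\int_{-\tau}^0|\bar Y^{\tr}_{t_i}(\theta)|^{p}\mathrm d\theta$, $\int_{-\tau}^0|\bar Y^{\tr}_{t_i}(\theta)|^{p}\rho_3(\theta)\mathrm d\theta$ and their $p+\varrho$ counterparts exactly as in \eqref{o3.26}--\eqref{o3.26+1} (convexity bound from \eqref{o3.4}, then the index shift $v=i+j$), the positive delayed contributions are, by virtue of $e^{\nu_1\tau}(\b_2+\b_3)<\b_1$ and $e^{\nu_1\tau}(\b_5+\b_6)<\b_4$ (these follow from Lemma~\ref{+r2} since $\nu_1<\nu$), dominated by the negative terms $-\b_1\tr\sum_i e^{\nu_1 t_{i+1}}\E|Y^{\tr}(t_i)|^{p}$ and $-\b_4\tr\sum_i e^{\nu_1 t_{i+1}}\E|Y^{\tr}(t_i)|^{p+\varrho}$, up to a boundary contribution depending only on $\sup_{-N\le j\le-1}(|\xi(t_j)|^{p}+|\xi(t_j)|^{p+\varrho})$. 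Dropping the (now nonpositive) remainder gives $e^{\nu_1 t_{k+1}}\E\big(\Phi_1(\bar Y^{\tr}_{t_{k+1}})\big)^{p/2}\le C$ for a constant $C$ independent of $k$, and since $(1-\e_1)|Y^{\tr}(t_k)|^2\le\Phi_1(\bar Y^{\tr}_{t_k})$ we deduce $\E|Y^{\tr}(t_k)|^{p}\le(1-\e_1)^{-p/2}Ce^{-\nu_1 t_k}$, which is \eqref{o4.11+1}.

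The almost sure assertion \eqref{o4.11++1} then follows by the standard Chebyshev--Borel--Cantelli argument: for fixed $\e\in(0,\nu_1)$,
\begin{align*}
\PP\big\{|Y^{\tr}(t_k)|>e^{-(\nu_1-\e)t_k/p}\big\}\le e^{(\nu_1-\e)t_k}\,\E|Y^{\tr}(t_k)|^{p}\le(1-\e_1)^{-p/2}Ce^{-\e k\tr},
\end{align*}
which is summable in $k$, so Borel--Cantelli gives $|Y^{\tr}(t_k)|\le e^{-(\nu_1-\e)t_k/p}$ for all large $k$ a.s., whence $\limsup_{k\to\infty}\frac{1}{k\tr}\log|Y^{\tr}(t_k)|\le-(\nu_1-\e)/p$ a.s.; letting $\e\downarrow0$ yields \eqref{o4.11++1}. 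I expect the only genuinely delicate point to be the summation step: one must carry the two polynomial degrees $p$ and $p+\varrho$ through the delay estimates \eqref{o3.26}--\eqref{o3.26+1} simultaneously, and check that passing from $e^{\nu\tau}$ to $e^{\nu_1\tau}$ keeps both strict inequalities $e^{\nu_1\tau}(\b_2+\b_3)<\b_1$ and $e^{\nu_1\tau}(\b_5+\b_6)<\b_4$ alive; granted Lemma~\ref{+r2} this is routine, and all other steps are a direct transcription of the proofs of Lemma~\ref{esti} and Theorem~\ref{oth2}.
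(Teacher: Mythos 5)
There is a genuine gap at the heart of your argument, namely in the step where you pass from the one-step recursion to the exponentially weighted sum. If you import Lemma~\ref{esti} ``verbatim'', the error terms are $L\tr$ with a constant $L$ that does \emph{not} shrink with $\tr$, so the recursion reads $\E\big((\Phi_1(\bar Y^{\tr}_{t_{i+1}}))^{p/2}\mid\mathcal F_{t_i}\big)\le(1+L\tr)(\Phi_1(\bar Y^{\tr}_{t_i}))^{p/2}+\cdots$ exactly as in \eqref{cnex}. Multiplying by $e^{\nu_1 t_{i+1}}$ and using $e^{\nu_1 t_{i+1}}\le e^{\nu_1 t_i}+\nu_1\tr e^{\nu_1 t_{i+1}}$ then gives
\begin{align*}
(1+L\tr)e^{\nu_1 t_{i+1}}\big(\Phi_1(\bar Y^{\tr}_{t_i})\big)^{\frac{p}{2}}
\le e^{\nu_1 t_i}\big(\Phi_1(\bar Y^{\tr}_{t_i})\big)^{\frac{p}{2}}+(\nu_1+L)\,\tr\, e^{\nu_1 t_{i+1}}\big(\Phi_1(\bar Y^{\tr}_{t_i})\big)^{\frac{p}{2}},
\end{align*}
so the coefficient you must feed into Lemma~\ref{+r2} is $\nu_1+L$, not $\nu_1+L\tr$ as you wrote; your choice $\hat\tr=(\nu-\nu_1)/L$ enforces $L\tr\le\nu-\nu_1$, which is not the condition you need, and $\nu_1+L\le\nu$ cannot be arranged by shrinking $\tr$. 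This is precisely the point where the stability proof differs structurally from Theorem~\ref{oth2}: there the surplus $L(\Phi)^{p/2}$ could be absorbed because \eqref{re1+4} tolerates an arbitrary $c>0$ at the price of an additive constant $+L$, whereas \eqref{re2+3} admits no additive constant and caps the coefficient at $\nu$. The paper's proof closes this by exploiting $f(\mathbf 0)=g(\mathbf 0)=0$ to replace \eqref{o3.6}--\eqref{o3.6+} by the homogeneous bounds \eqref{o4.10}--\eqref{o4.10+} and re-deriving the conditional moment estimates of $\hat\Theta_i$ with error $L\tr^{2-2\lambda}=o(\tr)$ (see \eqref{th4.2+1} and \eqref{mono}); the weighted coefficient then becomes $\nu_1+L\tr^{1-2\lambda}$, and since $\lambda<1/2$ this is $\le\nu$ once $\tr\le\hat\tr$ with $L\hat\tr^{1-2\lambda}\le\nu-\nu_1$. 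That re-derivation is the missing idea in your proposal, and it is the only reason the statement needs a $\nu_1$-dependent $\hat\tr$ at all.

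A secondary, more cosmetic difference: because $\Phi_1$, unlike $\Phi$, has no additive $1$, the quantity $\hat\Theta_i$ is ill-defined on the random event $\{\Phi_1(\bar Y^{\tr}_{t_i})=0\}$. The paper handles this by working with $\iota+\Phi_1$ for $\iota\in(0,1]$ and letting $\iota\downarrow 0$ via dominated convergence. Your dichotomy (``if $\Phi_1=0$ then the solution is identically zero afterwards'') is salvageable, but as stated it treats a random event as a deterministic alternative and conflates the pathwise and $L^p$ assertions; you would need to argue that the one-step inequality holds trivially on that $\mathcal F_{t_i}$-measurable event and restrict the division to its complement. The remaining ingredients --- the transfer of \eqref{o3.26}--\eqref{o3.26+1} with the two exponents $p$ and $p+\varrho$, the use of $e^{\nu_1\tau}(\b_2+\b_3)<\b_1$ and $e^{\nu_1\tau}(\b_5+\b_6)<\b_4$, and the Chebyshev--Borel--Cantelli derivation of \eqref{o4.11++1} from \eqref{o4.11+1} --- match the paper's proof (which cites the discrete Razumikhin technique for the last step) and are fine.
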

\begin{proof}\textbf{Proof.}
This proof uses the similar techniques as  Theorem \ref{oth2}.
For any $\iota\in(0,1]$ and  $i\in\mathbb N$, in a similar way as \eqref{o3.13} was shown, we derive
\begin{align}\label{o4.13}
\E \Big(\big(\iota+\Phi_1(\bar Y^{\tr}_{t_{i+1}})\big)^{\frac{ p}{2}}\big|\mathcal{F}_{t_{i}}\Big)
\leq &\E\Big(\big(\iota+(1-\e_1)|\breve Y^{\tr}(t_{i+1})|^2+\frac{\e_1}{\tau}\int_{-\tau}^{0}|\bar Y^{\tr}_{t_{i+1}}(\theta)|^2\mathrm d\theta\big)^{\frac{ p}{2}}\big|\mathcal{F}_{t_{i}}\Big)\nn\\
\leq&\big(\iota+\Phi_1(\bar Y^{\tr}_{t_i})\big)^{\frac{p}{2}}\Big(1 + \frac{p}{2} \E\big(\hat\Theta_{i}\big|\mathcal{F}_{t_{i}}\big)+ \frac{p( p-2)}{8} \E\big(\hat \Theta_{i}^2\big|\mathcal{F}_{t_{i}}\big)\nn\\
    &+  \E\big(\hat \Theta_{i}^3P_{\ell}(\hat \Theta_{i})\big|\mathcal{F}_{t_{i}}\big)\Big),
\end{align}
where the integer $\ell$ satisfies $2\ell<p\leq 2(\ell+1)$, $P_{\ell}(\cdot)$ is an $\ell$th-order polynomial, and
\begin{align*}
\hat \Theta_{i}=&\big(\iota+\Phi_1(\bar Y^{\tr}_{t_i})\big)^{-1}\Big((1-\e_1)\big(|f(\bar Y^{\tr}_{t_i})|^2\tr^2+|g(\bar Y^{\tr}_{t_i}) \tr B_{i}|^2+2(Y^{\tr}(t_i))^T f(\bar Y^{\tr}_{t_i})\tr\nn\\
&+2(Y^{\tr}(t_i))^T g(\bar Y^{\tr}_{t_i})\tr B_{i}
+2 f^T(\bar Y^{\tr}_{t_i})g(\bar Y^{\tr}_{t_i}) \tr B_{i}\tr\big)+\e_1\big(\frac{2\tr}{\tau}|Y^{\tr}(t_i)|^2\nn\\
&+\frac{3\tr^3}{2\tau}|f(\bar Y^{\tr}_{t_i})|^2+ \frac{3\tr}{2\tau}|g(\bar Y^{\tr}_{t_i}) \tr B_{i}|^2\big)
\Big).
\end{align*}
In a similar way as  Lemma  \ref{esti} was shown, and making use of \eqref{o4.10} and   \eqref{o4.10+}, we obtain
\begin{align}\la{th4.2+1}
 &\E\big(\hat \Theta_{i}\big|\mathcal{F}_{t_{i}}\big)
\leq (1-\e_1)\big(\iota+\Psi_1(\bar Y^{\tr}_{t_i})\big)^{-1}\big(2(Y^{\tr}(t_i))^T f(\bar Y^{\tr}_{t_i})+|g(\bar Y^{\tr}_{t_i})|^2\big)\tr+L\tr^{2-2\lambda},\nn\\
&\E\big(\hat\Theta_{i}^2\big|\mathcal{F}_{t_{i}}\big)
\leq 4(1-\e_1)^2\big(\iota+\Psi_1(\bar Y^{\tr}_{t_i})\big)^{-2}\big|(Y^{\tr}(t_i))^T g(\bar Y^{\tr}_{t_i})\big|^2\tr+L\tr^{2-2\lambda},\nn\\
&\E\big(\hat \Theta_{i}^3P_{\ell}(\hat \Theta_{i})\big|\mathcal{F}_{t_{i}}\big)
\leq L\tr^{2-2\lambda}.
\end{align}
Inserting \eqref{th4.2+1} into \eqref{o4.13} and taking expectations leads to
\begin{align}\la{mono}
&\E\big(\iota+\Phi_1(\bar Y^{\tr}_{t_{i+1}})\big)^{\frac{p}{2}}
=\E \Big(\E\big((\iota+\Phi_1(\bar Y^{\tr}_{t_{i+1}}))^{\frac{ p}{2}}\big|\mathcal{F}_{t_{i}}\big)\Big)\nn\\
\leq& (1+L\tr^{2-2\lambda})\E\big(\iota+\Phi_1(\bar Y^{\tr}_{t_i})\big)^{\frac{p}{2}}+\frac{(1-\e_1) p\tr}{2}\E\Big(
(\iota+\Phi_1(\bar Y^{\tr}_{t_i}))^{\frac{ p-4}{2}}\big((\iota+\Phi_1(\bar Y^{\tr}_{t_i}))\nn\\
&\times\big(2(Y^{\tr}(t_i))^Tf(\bar Y^{\tr}_{t_i})+|g(\bar Y^{\tr}_{t_i})|^2\big)+(1-\e_1)(p-2)|(Y^{\tr}(t_i))^Tg(\bar Y^{\tr}_{t_i})|^2 \big)\Big)\nn\\
\leq& (1+L\tr^{2-2\lambda})\E\big(\iota+\Phi_1(\bar Y^{\tr}_{t_i})\big)^{\frac{p}{2}}+\frac{(1-\e_1) p\tr}{2}\E\Big(\big(\iota+\Phi_1(\bar Y^{\tr}_{t_i})\big)^{\frac{ p-2}{2}}\big(2(Y^{\tr}(t_i))^Tf(\bar Y^{\tr}_{t_i})\nn\\
&+(p-1)|g(\bar Y^{\tr}_{t_i})|^2\big)\Big).
\end{align}
It is straightforward to see from $\iota\in(0,1]$ that
$$
\big(\iota+\Phi_1(\bar Y^{\tr}_{t_{i+1}})\big)^{\frac{p}{2}}\leq \big(1+\Phi_1(\bar Y^{\tr}_{t_{i+1}})\big)^{\frac{p}{2}},$$
and
\begin{align*}
&\big(\iota+\Phi_1(\bar Y^{\tr}_{t_i})\big)^{\frac{ p-2}{2}}\big(2(Y^{\tr}(t_i))^Tf(\bar Y^{\tr}_{t_i})+(p-1)|g(\bar Y^{\tr}_{t_i})|^2\big)\tr\nn\\
\leq &\big(1+\Phi_1(\bar Y^{\tr}_{t_{i}})\big)^{\frac{p-2}{2}}(2|Y^{\tr}(t_i)||f(\bar Y^{\tr}_{t_i})|+(p-1)|g(\bar Y^{\tr}_{t_i})|^2)\tr.
\end{align*}
By the definition of $\Phi_1$ and Theorem \ref{oth2}  we have
\begin{align*}
\E\big(1+\Phi_1(\bar Y^{\tr}_{t_{i+1}})\big)^{\frac{p}{2}}\leq L\big(1+\sup_{\tr\in(0,1]}\sup_{j\geq-N}\E|Y^{\tr}(t_{j})|^{p}\big)\leq L.
\end{align*}
By  \eqref{o4.10}, \eqref{o4.10+}, and Theorem \ref{oth2}  we derive
\begin{align*}
&\E\big(1+\Phi_1(\bar Y^{\tr}_{t_{i}})\big)^{\frac{p-2}{2}}(2|Y^{\tr}(t_i)||f(\bar Y^{\tr}_{t_i})|+(p-1)|g(\bar Y^{\tr}_{t_i})|^2)\tr\nn\\
\leq &L\tr^{1-\lambda}\E\big(1+\Phi_1(\bar Y^{\tr}_{t_{i}})\big)^{\frac{p}{2}}\nn\\
\leq&L\big(1+\sup_{\tr\in(0,1]}\sup_{j\geq-N}\E|Y^{\tr}(t_{j})|^{p}\big)\leq L.
\end{align*}
Hence according to \eqref{mono} and  using the dominated convergence theorem
yields
\begin{align*}
&\E\big(\Phi_1(\bar Y^{\tr}_{t_{i+1}})\big)^{\frac{p}{2}}
=\lim_{\iota\rightarrow 0^+}\E\big(\iota+\Phi_1(\bar Y^{\tr}_{t_{i+1}})\big)^{\frac{p}{2}}\nn\\
\leq& (1+L\tr^{2-2\lambda})\E\big(\Phi_1(\bar Y^{\tr}_{t_i})\big)^{\frac{p}{2}}+\frac{(1-\e_1)p\tr}{2}\E\Big(\big(\Phi_1(\bar Y^{\tr}_{t_i})\big)^{\frac{p-2}{2}}\big(2(Y^{\tr}(t_i))^Tf(\bar Y^{\tr}_{t_i})\nn\\
&+(p-1)|g(\bar Y^{\tr}_{t_i})|^2\big)\Big).
\end{align*}
For any $\nu_1\in(0,\nu)$, choose a  $\hat\tr\in(0,1]$  sufficiently  small such that $$L\hat\tr^{1-2\lambda}+\nu_1\leq\nu.$$
Making use of $
e^{\nu_1 t_{i+1}}-e^{\nu_1 t_i}
\leq e^{\nu_1 t_{i+1}}\nu_1\tr$ we derive that for any $\tr\in(0,\hat\tr]$,
\begin{align*}
&e^{\nu_1 t_{i+1}}\E\big(\Phi_1(\bar Y^{\tr}_{t_{i+1}})\big)^{\frac{p}{2}}\nn\\
\leq&(e^{\nu_1 t_i}+e^{\nu_1 t_{i+1}}\nu_1\tr)\E\big(\Psi_1(\bar Y^{\tr}_{t_i})\big)^{\frac{p}{2}}+e^{\nu_1 t_{i+1}}\tr\E\Big(L\hat\tr^{1-2\lambda}\big(\Phi_1(\bar Y^{\tr}_{t_i})\big)^{\frac{p}{2}}+\frac{(1-\e_1)p}{2}\nn\\
&\times\big(\Phi_1(\bar Y^{\tr}_{t_i})\big)^{\frac{p-2}{2}}\big(2(Y^{\tr}(t_i))^Tf(\bar Y^{\tr}_{t_i})+(p-1)|g(\bar Y^{\tr}_{t_i})|^2\big)\Big)\nn\\
\leq &e^{\nu_1 t_i}\E\big(\Phi_1(\bar Y^{\tr}_{t_i})\big)^{\frac{p}{2}}+e^{\nu_1 t_{i+1}}\tr\E\Big(\nu\big(\Phi_1(\bar Y^{\tr}_{t_i})\big)^{\frac{ p}{2}}+\frac{(1-\e_1)p}{2}\big(\Phi_1(\bar Y^{\tr}_{t_i})\big)^{\frac{ p-2}{2}}\nn\\
&\times\big(2(Y^{\tr}(t_i))^Tf(\bar Y^{\tr}_{t_i})+(p-1)|g(\bar Y^{\tr}_{t_i})|^2\big)\Big).
\end{align*}
This, along with \eqref{re2+3} implies that
\begin{align*}
&e^{\nu_1 t_{i+1}}\E\big(\Phi_1(\bar Y^{\tr}_{t_{i+1}})\big)^{\frac{p}{2}}-e^{\nu_1 t_i}\E\big(\Phi_1(\bar Y^{\tr}_{t_i})\big)^{\frac{p}{2}}\nn\\
\leq&e^{\nu_1  t_{i+1}}\tr\E\Big(-\b_1| Y^{\tr}(t_i)|^{p}+\frac{ \b_2}{\tau}\int_{-\tau}^{0}|\bar  Y^{\tr}_{t_{i}}(\theta)|^{p}\mathrm d\theta+\b_3\int_{-\tau}^{0}|\bar  Y^{\tr}_{t_{i}}(\theta)|^{p}\rho_3(\theta)\mathrm d\theta\nn\\
&-\b_4| Y^{\tr}(t_i)|^{p+\varrho}+\frac{\b_5}{\tau}\int_{-\tau}^{0}|\bar  Y^{\tr}_{t_{i}}(\theta)|^{p+\varrho}\mathrm d\theta
+\b_6\int_{-\tau}^{0}|\bar  Y^{\tr}_{t_{i}}(\theta)|^{p+\varrho}\rho_4(\theta)\mathrm d\theta\Big).
\end{align*}
Summing the above inequality on both sides from $i=0$ to $k$ yields
\begin{align}\la{totalstabnu}
&e^{\nu_1 t_{k+1}}\E\big(\Phi_1(\bar Y^{\tr}_{t_{k+1}})\big)^{\frac{p}{2}}\nn\\
\leq&\big(\Phi_1(\bar Y^{\tr}_{0})\big)^{\frac{p}{2}}+\tr\sum_{i=0}^{k}e^{\nu_1  t_{i+1}}\E\Big(-\b_1|Y^{\tr}(t_i)|^{p}+\frac{\b_2}{\tau}\int_{-\tau}^{0}|\bar Y^{\tr}_{t_{i}}(\theta)|^{p}\mathrm d\theta\nn\\
&+\b_3\int_{-\tau}^{0}|\bar Y^{\tr}_{t_{i}}(\theta)|^{p}\rho_3(\theta)\mathrm d\theta-\b_4|Y^{\tr}(t_i)|^{p+\varrho}\nn\\
&+\frac{\b_5}{\tau}\int_{-\tau}^{0}|\bar Y^{\tr}_{t_{i}}(\theta)|^{p+\varrho}\mathrm d\theta+\b_6\int_{-\tau}^{0}|\bar Y^{\tr}_{t_{i}}(\theta)|^{p+\varrho}\rho_4(\theta)\mathrm d\theta\Big).
\end{align}
In the similar ways as \eqref{o3.26} and \eqref{o3.26+1} were shown, we have
\begin{align}\la{b22n}
&\sum_{i=0}^{k}e^{\nu_1 t_{i+1}}\big(\frac{1}{\tau}\int_{-\tau}^{0}|\bar Y^{\tr}_{t_{i}}(\theta)|^{p}\mathrm d\theta\big)
\leq L+e^{\nu_1\tau}\sum_{i=0}^{k}e^{\nu_1  t_{i+1}}|Y^{\tr}(t_i)|^{p},\nn\\
&\sum_{i=0}^{k}e^{\nu_1 t_{i+1}}\int_{-\tau}^{0}|\bar Y^{\tr}_{t_{i}}(\theta)|^{p}\rho_3(\theta)\mathrm d\theta
\leq L+e^{\nu_1\tau}\sum_{i=0}^{k}e^{\nu_1  t_{i+1}}|Y^{\tr}(t_i)|^{p},\nn\\
&\sum_{i=0}^{k}e^{\nu_1 t_{i+1}}\big(\frac{1}{\tau}\int_{-\tau}^{0}|\bar Y^{\tr}_{t_{i}}(\theta)|^{p+\varrho}\mathrm d\theta\big)
\leq L+e^{\nu_1\tau}\sum_{i=0}^{k}e^{\nu_1  t_{i+1}}|Y^{\tr}(t_i)|^{p+\varrho},\nn\\
&\sum_{i=0}^{k}e^{\nu_1 t_{i+1}}\int_{-\tau}^{0}|\bar Y^{\tr}_{t_{i}}(\theta)|^{p+\varrho}\rho_4(\theta)\mathrm d\theta
\leq L+e^{\nu_1\tau}\sum_{i=0}^{k}e^{\nu_1  t_{i+1}}|Y^{\tr}(t_i)|^{p+\varrho}.
\end{align}
Inserting \eqref{b22n} into \eqref{totalstabnu} and then using $$e^{\nu\tau}(\b_2+\b_3)< \b_1~~\hbox{and}~~ e^{\nu\tau}(\b_5+\b_6)<\b_4,$$
we derive
 \begin{align*}
e^{\nu_1 t_{k+1}}\E\big(\Phi_1(\bar Y^{\tr}_{t_{k+1}})\big)^{\frac{p}{2}}
\leq&L,
\end{align*}
which implies that (\ref{o4.11+1}) holds.
By virtue of (\ref{o4.11+1}), using the similar technique as  in the proof of \cite[Theorem 3.4]{wu_mao-kloeden2013} implies the desired assertion \eqref{o4.11++1}.
\end{proof}
\begin{rem}
The results of exponential stability for the exact
solutions and  the numerical solutions still hold as $b_3=b_4=0$ in \textup{(A2$'$)}.
\end{rem}
\begin{rem}
Our numerical method is also suitable for systems with coefficients $f(\cdot)$ and $g(\cdot)$ to be of the form
$$H(\phi(0),\int_{-\tau_1}^{0}\phi(\theta)\rho_{1}(\theta)\mathrm d\theta,\cdots,\int_{-\tau_M}^{0}\phi(\theta)\rho_{M}(\theta)\mathrm d\theta),$$
where $M$ is a positive integer, ~$\tau_i>0$ and $\rho_i\in\mathcal W([-\tau_{i},0];\RR_+)$ for any $i\in\{1,\cdots,M\}$, ~$\tau:=\max_{1\leq i\leq M}\tau_i$ and $\phi\in C([-\tau,0];\RR^n)$, and the function $H:\RR^n\times\cdots\times\RR^n\rightarrow \RR^n$ satisfies the local Lipschitz condition.
In such a case,
we may rewrite  $\sum_{i=1}^{M}\int_{-\tau_i}^{0}\phi(\theta)\rho_i(\theta)\mathrm d\theta$
as
 $\int_{-\tau}^{0}\phi(\theta)\bar\rho(\theta)\mathrm d\theta,$
where $  \bar\rho(\theta):=\frac{1}{M}\sum_{i=1}^{M}\rho_i(\theta)\textup{\textbf1}_{[-\tau_i,0]}(\theta) \in\mathcal W([-\tau,0];\RR_+).$
\end{rem}

\section{Numerical examples}\label{Nexamp}
To verify the efficiency of our explicit scheme  we give  two examples and some numerical experiments.
\begin{expl}
{\rm
Recall SFDE \eqref{e1} and let $p=8$.
Applying the H\"{o}lder inequality yields
\begin{align*}
&2\langle f(\phi),\phi(0)\rangle+(p-1)|g(\phi)|^2
\leq2\phi(0)+8\phi^2(0)-8\phi^4(0)+7\int_{-1/2}^{0}\phi^4(\theta)\cdot2\mathrm d\theta,
\end{align*}
which implies that (A2) holds.
It is straightforward to see that (A3)  holds.
Let $\bar q=2$, $\bar p=3$, and compute
\begin{align*}
&2\big(\phi(0)-\bar\phi(0)\big)^T\big(f(\phi)-f(\bar \phi)\big)+(\bar p-1)|g(\phi)-g(\bar\phi)|^2\nn\\
\leq&8|\phi(0)-\bar\phi(0)|^2
-8|\phi(0)-\bar\phi(0)|^2\big(\phi^2(0)+\phi(0)\bar\phi(0)+\bar\phi^2(0)\big)\nn\\
&+2\int_{-1/2}^{0}|\phi(\theta)-\bar\phi(\theta)|^2|\phi(\theta)+\bar\phi(\theta)|^2\cdot2\mathrm d\theta\nn\\
\leq&8|\phi(0)-\bar\phi(0)|^2-4|\phi(0)-\bar\phi(0)|^2|\phi(0)+\bar\phi(0)|^2\nn\\
&+2\int_{-1/2}^{0}|\phi(\theta)-\bar\phi(\theta)|^2|\phi(\theta)+\bar\phi(\theta)|^2\cdot 2\mathrm d\theta,
\end{align*}
which implies that (A4) holds.
By a direct computation we know  that (A5) holds with $a_6=6$ and $r=2$.
In view of  Remark \ref{r2}, we take
$\Gamma(l)=6\sqrt2(1+4l^2)$ for any $l\geq1$. This implies
$$\Gamma^{-1}(l)=(\frac{l}{24\sqrt2}-\frac{1}{4})^{1/2}, ~~~\forall ~l\geq 30\sqrt2.$$
By \eqref{deterlambda} we have $\lambda=1/3$.
 Together with \eqref{tru} one goes a further step to obtain that for any $\tr\in(0,1]$ and $x\in\RR^n$,
\begin{align*}
\Lambda^{\triangle,\lambda}_{\Gamma}(x)= \Big(|x|\wedge \big(\frac{5}{4\tr^{1/3}}-\frac{1}{4}\big)^{1/2}\Big) \frac{x}{|x|}.
\end{align*}
Let $\bar\tr=2^{-5}$.
By virtue of Theorem \ref{thrate},  truncated EM solution \eqref{o3.8} has the property that for any $\tr\in(0,2^{-5}]$ and $T>0$,
\begin{align*}
\big(\E\big|x(T)-Y^{\tr}(T)\big|^{2}\big)^{1/2}
   \leq L \tr^{1/2}.
\end{align*}
Since it is impossible to find the closed form of the solutions of SFDE \eqref{e1}, we have to regard the  truncated EM solution  $\{Y^{\tr}(t)\}_{t\geq-\tau}$ with the smaller stepsize $\tr=2^{-18}$ as the
exact solution $\{x(t)\}_{t\geq-\tau}$.
To verify the convergence of the truncated EM solution $\{Y^{\tr}(t)\}_{t\geq-\tau}$, we simulate the root mean square approximation error $\big(\E\big|x(10)-Y^{\tr}(10)\big|^{2}\big)^{1/2}$ by using MATLAB soft.
In Figure \ref{sfdeerror},  the red dotted line depicts $\big(\E\big|x(10)-Y^{\tr}(10)\big|^{2}\big)^{1/2}$ as the function of $\tr$ for 1000 sample points as $\tr\in\{2^{-5},2^{-6},2^{-7},2^{-8},2^{-10}\}$, while
the blue line represents the reference line with the slope $1/2$.
\begin{figure}[htp]
  \begin{center}
\includegraphics[width=12cm,height=8cm]{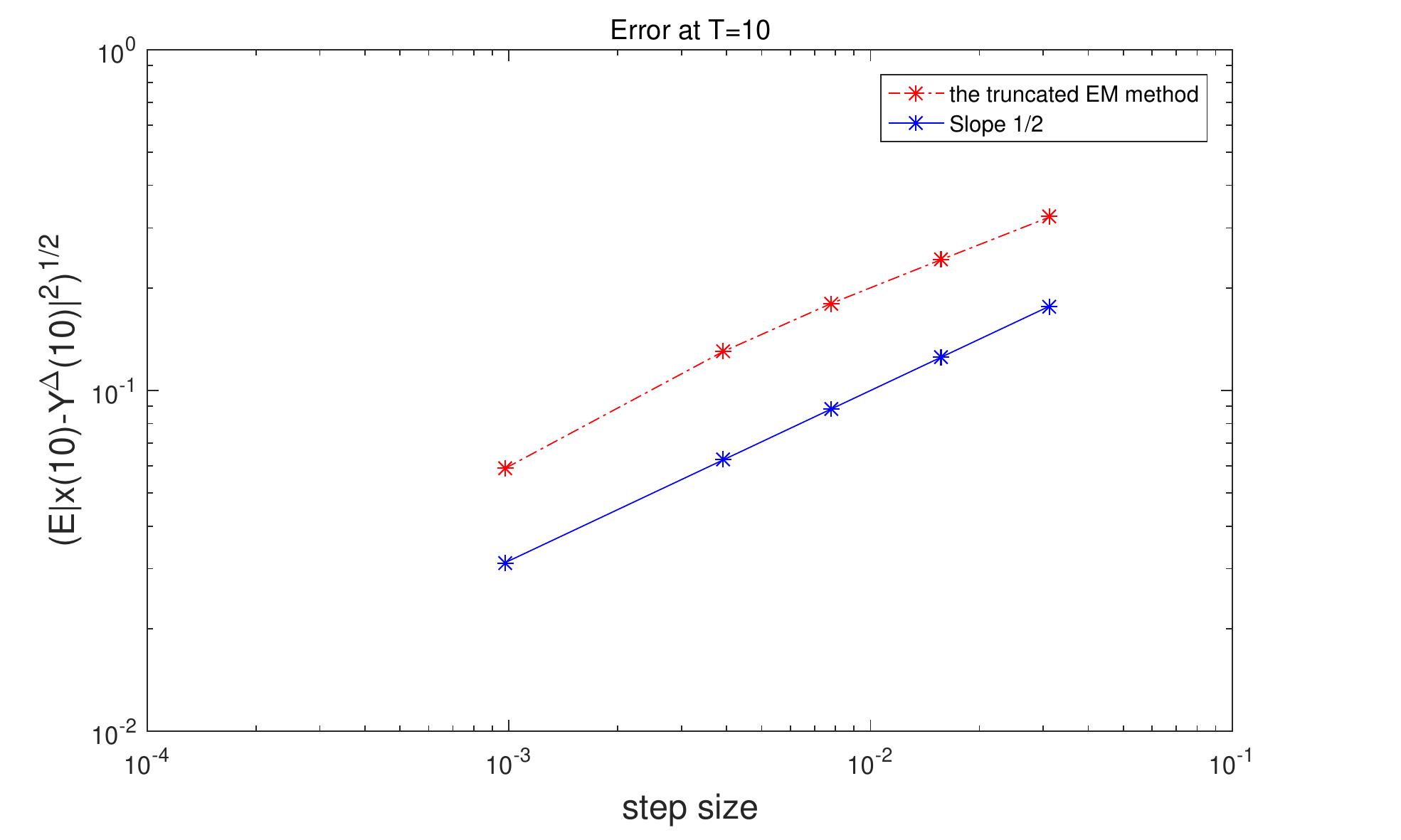}
   \end{center}
 \caption{The red dotted line depicts
 the root mean square approximation error $\big(\E\big|x(10)-Y^{\tr}(10)\big|^{2}\big)^{1/2}$
between  exact solution $x(10)$ and  truncated EM solution $Y^{\tr}(10)$
 for 1000 sample points as $\tr\in\{2^{-5},2^{-6},2^{-7},2^{-8},2^{-10}\}$, while the blue line represents the reference line with the slope $1/2$.  }
\label{sfdeerror}
\end{figure}
}
\end{expl}

\begin{expl}
\rm{
Consider the $2$-dimensional  SFDE
\begin{align}\la{e2}
\left\{
\begin{array}{ll}
\mathrm dx_1(t)=\big(-2x_1(t)-3x_1^3(t)+\int_{-1/4}^{0}x_2(t+\theta)\mathrm d\theta\big)\mathrm dt
+x_1(t)\mathrm dB_1(t),~~~t>0,\\
\mathrm dx_2(t)=\big(-2x_2(t)-2x_2^3(t)+\int_{-1/2}^{0}x^3_1(t+\theta)\mathrm d\theta\big)\mathrm dt
+x_2(t)\mathrm dB_2(t),\\
\end{array}
\right.
\end{align}
with the initial data $(\xi_1(\theta), \xi_2(\theta))^T=(\theta^2, \sin(-\theta+2))^T$ for any $\theta\in[-1/2,0]$.
Letting $p=2$ and applying the Young inequality, we  derive that for any $\phi(\cdot)=(\phi_1(\cdot),\phi_2(\cdot))^T\in \mathcal C$,
\begin{align*}
&2(\phi(0))^Tf(\phi)+|g(\phi)|^2\nn\\
\leq&-3(|\phi_1(0)|^2+|\phi_2(0)|^2)
-4(|\phi_1(0)|^4+|\phi_2(0)|^4)+\frac{1}{2}|\phi_1(0)|\int_{-1/4}^{0}|\phi_2(\theta)|\cdot4\mathrm d\theta\nn\\
&+|\phi_2(0)|\int_{-1/2}^{0}|\phi_1(\theta)|^3\cdot 2\mathrm d\theta\nn\\
\leq&-\frac{11}{4}|\phi(0)|^2-\frac{15}{2}(|\phi_1(0)|^4+|\phi_2(0)|^4)
+\frac{1}{4}\int_{-1/4}^{0}|\phi_2(\theta)|^2\cdot4\mathrm d\theta
+\frac{3}{4}\int_{-1/2}^{0}|\phi_1(\theta)|^4\cdot2\mathrm d\theta\nn\\
\leq&-\frac{11}{4}|\phi(0)|^2-\frac{15}{4}|\phi(0)|^4+\frac{1}{4}\int_{-1/4}^{0}|\phi(\theta)|^2\cdot4\mathrm d\theta
+\frac{3}{4}\int_{-1/2}^{0}|\phi^4(\theta)|\cdot2\mathrm d\theta,
\end{align*}
which implies that (A2$'$) holds with $$b_1=\frac{11}{4},~b_2=\frac{1}{4},~b_3=\frac{15}{4},~b_4=\frac{3}{4},$$
 $$\rho_3(\theta)=4\textbf 1_{[-1/4,0]}(\theta),~\rho_4(\theta)=2,~~~\forall~\theta\in[-\frac{1}{2},0].$$
  Choose $\nu=2$ in \eqref{o4.5}.
In view of Theorem \ref{th2},  the exact solution   of  SFDE \eqref{e2}  satisfies
  \begin{align*}
   &\limsup_{t\rightarrow \infty}\frac{1}{t}\log(\E|x(t)|^{2})\leq -2,\nn\\
   &\limsup_{t\rightarrow \infty}\frac{1}{t}\log|x(t)| \leq -1,~~~\hbox{a.s.}
   \end{align*}
  For each $l\geq 1$, a direct computation derives
\begin{align*}
&\sup_{\|\phi\|\vee\|\bar\phi\|\leq l}|f(\phi)-f(\bar\phi)|\leq (4+18l^2) (|\phi(0)-\bar\phi(0)|^2+2\int_{-1/2}^{0}|\phi(\theta)-\bar\phi(\theta)|^2\mathrm d\theta)^{1/2},\nn\\
&\sup_{\|\phi\|\vee\|\bar\phi\|\leq l}|g(\phi)-g(\bar\phi)|^2\leq|\phi(0)-\bar\phi(0)|^2
\end{align*}
for all $\phi,~\bar\phi\in \mathcal C$.
Recalling \eqref{o3.1} we take
$$\Gamma(l)=4+18l^2,~~~\forall~l\geq 1.$$
This implies $$\Gamma^{-1}(l)=(\frac{l}{18}-\frac{2}{9})^{1/2}, ~~~\forall ~l\geq 22.$$
Let $\lambda=0.001$. Then, define
$$\Lambda^{\triangle, \lambda}_{\Gamma}(x)= \Big(|x|\wedge (\frac{11}{9\tr^{0.001}}-\frac{2}{9})^{1/2}\Big) \frac{x}{|x|},$$
By virtue of Theorem \ref{th6}, for any $\nu_1\in(0,2)$, there exists a $\hat\tr\in(0,1]$ such that for any $\tr\in(0,\hat\tr]$, truncated EM solution \eqref{o3.8} satisfies
$$
   \limsup_{k\rightarrow\infty}\frac{1}{k\tr}\log{(\E|Y^{\tr}(t_k)|^{2})}\leq -\nu_1, ~~\hbox{and}~~
    \limsup_{k\rightarrow\infty}\frac{1}{k\tr}\log{| Y^{\tr}(t_k)|} \leq- \frac{\nu_1}{2}, ~\hbox{a.s.}$$
Figure \ref{sfdemean} depicts the sample mean of the truncated EM solution $Y^{\tr}(t)$. Figure \ref{sfdepp}
depicts the sample paths of the EM solution and the truncated EM solution.
\begin{figure}[htp]
  \begin{center}
\includegraphics[width=12cm,height=8cm]{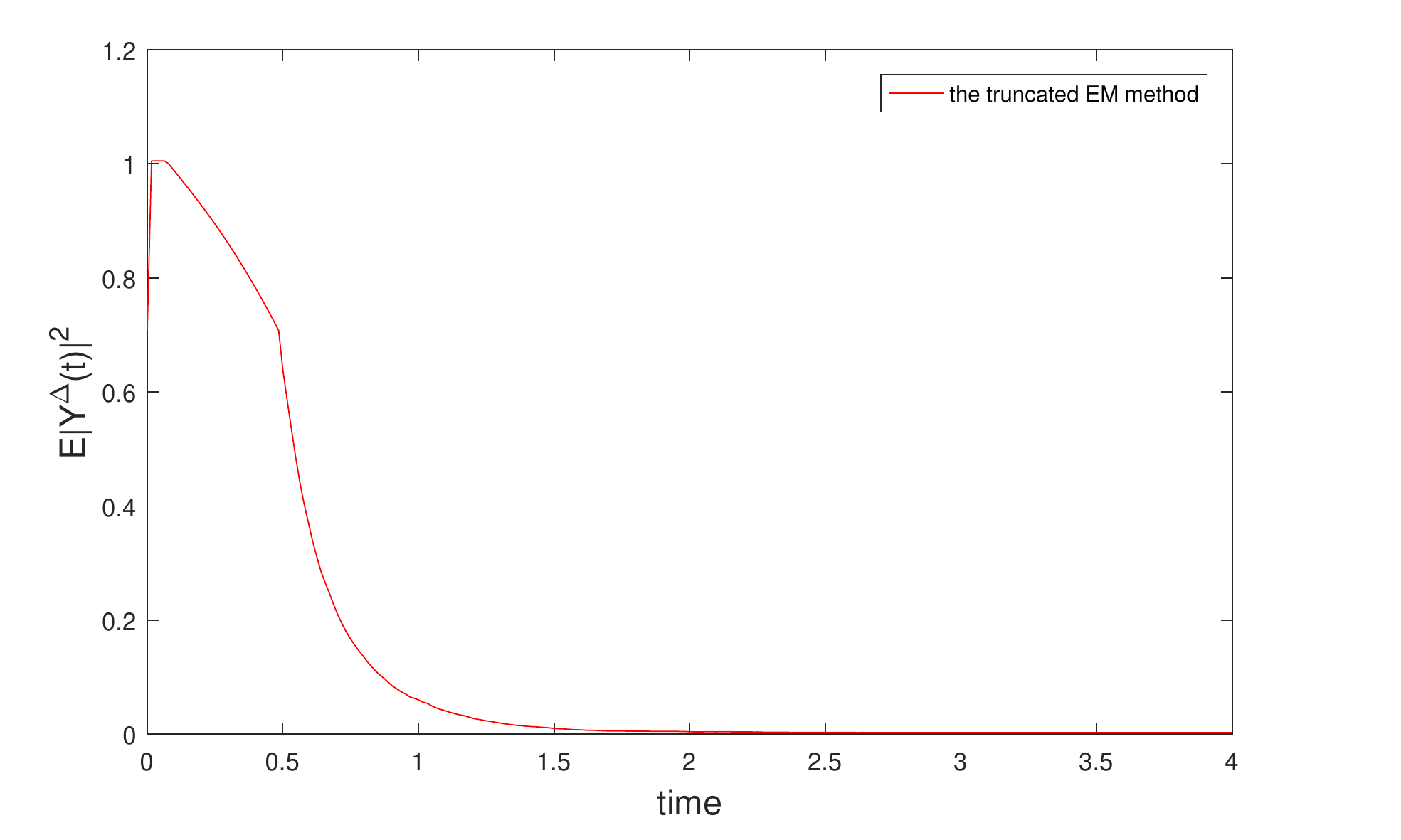}
   \end{center}
 \caption{The sample mean of $Y^{\tr}(t)$ for 1000 sample points as $\tr=2^{-6}$.  }
\label{sfdemean}
\end{figure}
\begin{figure}[htp]
  \begin{center}
\includegraphics[width=12cm,height=8cm]{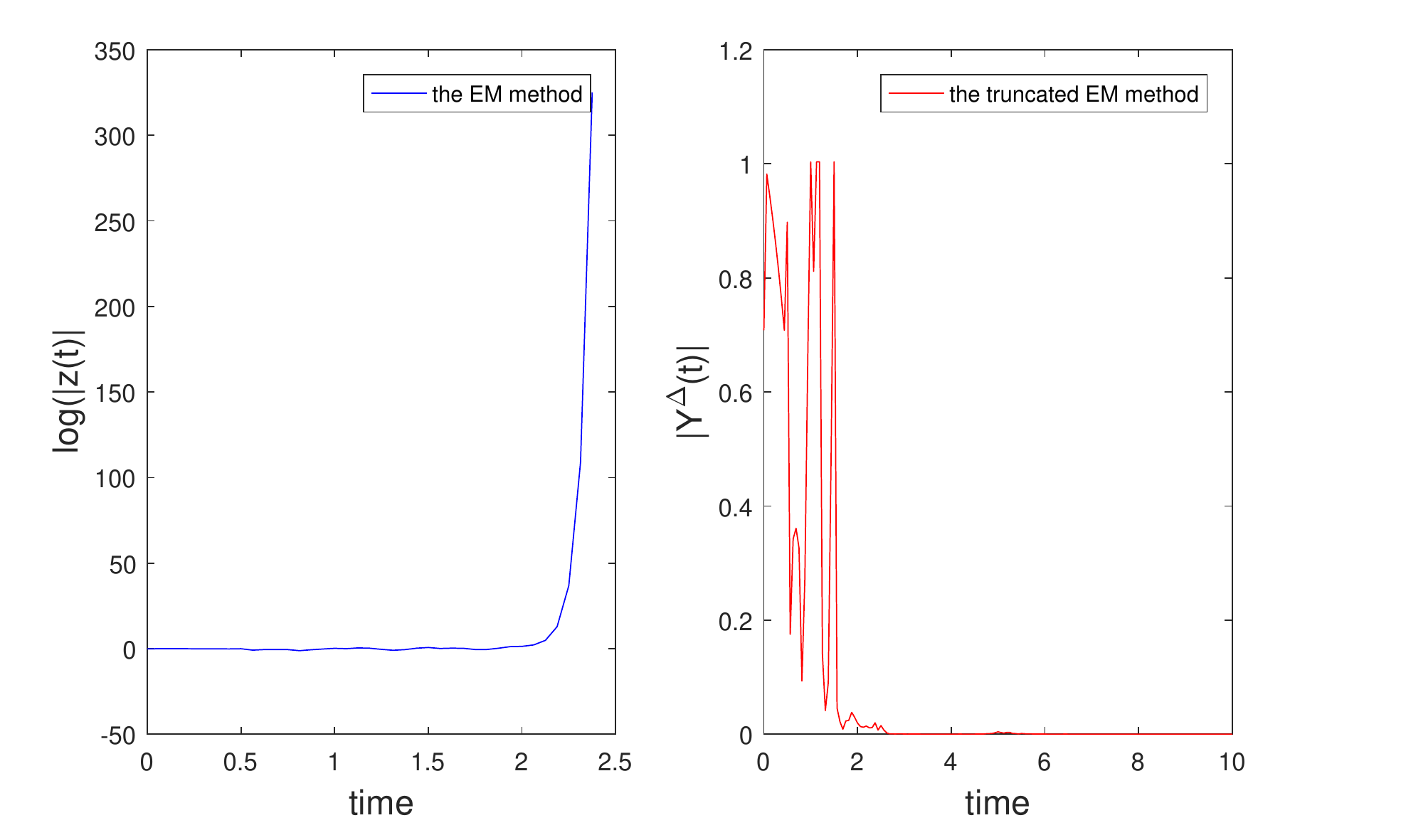}
   \end{center}
 \caption{ (Left) The sample path of EM solution $\ln(z(t))$ as $\tr=2^{-6}$. (Right) The sample path of truncated EM solution $Y^{\tr}(t)$ as $\tr=2^{-6}$.   }
\label{sfdepp}
\end{figure}

}
\end{expl}

\end{document}